\newtheorem{observation}{Observation}
\newtheorem{thm}{Theorem}[section]
\newtheorem{lem}[thm]{Lemma}
\newtheorem{cor}[thm]{Corollary}
\newtheorem{prop}[thm]{Proposition}
\newcommand{\ssat}{\operatorname{ssat}}
\newcommand{\sat}{\operatorname{sat}}
\newcommand{\ex}{\operatorname{ex}}
\title{Extremal bounds for pattern avoidance in multidimensional 0-1 matrices}
\author{Jesse Geneson and Shen-Fu Tsai}
\begin{document}
\maketitle

\begin{abstract}
A 0-1 matrix $M$ contains another 0-1 matrix $P$ if some submatrix of $M$ can be turned into $P$ by changing any number of $1$-entries to $0$-entries. The 0-1 matrix $M$ is $\mathcal{P}$-saturated where $\mathcal{P}$ is a family of 0-1 matrices if $M$ avoids every element of $\mathcal{P}$ and changing any $0$-entry of $M$ to a $1$-entry introduces a copy of some element of $\mathcal{P}$. The extremal function $\ex(n,\mathcal{P})$ and saturation function $\sat(n,\mathcal{P})$ are the maximum and minimum possible number of $1$-entries in an $n\times n$ $\mathcal{P}$-saturated 0-1 matrix, respectively, and the semisaturation function $\ssat(n,P)$ is the minimum possible number of $1$-entries in an $n\times n$ $\mathcal{P}$-semisaturated 0-1 matrix $M$, i.e., changing any $0$-entry in $M$ to a $1$-entry introduces a new copy of some element of $\mathcal{P}$.

We study these functions of multidimensional 0-1 matrices. In particular, we give upper bounds on parameters of minimally non-$O(n^{d-1})$ $d$-dimensional 0-1 matrices, generalized from minimally nonlinear 0-1 matrices in two dimensions, and we show the existence of infinitely many minimally non-$O(n^{d-1})$ $d$-dimensional 0-1 matrices with all dimensions of length greater than $1$. For any positive integers $k,d$ and integer $r\in[0,d-1]$, we construct a family of $d$-dimensional 0-1 matrices with both extremal function and saturation function exactly $kn^r$ for sufficiently large $n$. We show that no family of $d$-dimensional 0-1 matrices has saturation function strictly between $O(1)$ and $\Theta(n)$ and we construct a family of $d$-dimensional 0-1 matrices with bounded saturation function and extremal function $\Omega(n^{d-\epsilon})$ for any $\epsilon>0$. Up to a constant multiplicative factor, we fully settle the problem of characterizing the semisaturation function of families of $d$-dimensional 0-1 matrices, which we prove to always be $\Theta(n^r)$ for some integer $r\in[0,d-1]$.
\end{abstract}




\section{Introduction}
A matrix is called a 0-1 matrix if all of its entries are either $0$ or $1$. The study of extremal theory of 0-1 matrices was motivated in part by the investigation of computational and geometric problems. To find the shortest $L_1$ path between two points in a rectilinear grid with obstacles, Mitchell proposed an algorithm \cite{mitchell1992} with time complexity bounded by the extremal function of a certain forbidden 0-1 matrix which was first obtained by Bienstock and Gy\H{o}ri \cite{BG1991}. Another application was on the maximum number of unit distances among the vertices of a convex $n$-gon raised by Erd\H{o}s and Moser \cite{EM1959}. F\"uredi gave the first upper bound $O(n\log_2 n)$ which is tighter than $n^{1+\epsilon}$ \cite{furedi1990maximum} using the extremal functions of a certain family of 0-1 matrices. Later, Pach and Sharir applied extremal functions of 0-1 matrices to bound the number of pairs of non-intersecting and vertically visible line segments \cite{pach1991vertical}. Perhaps the most well-known application is the resolution of the Stanley-Wilf conjecture in enumerative combinatorics concerning the number of permutations of $[n]$ avoiding another fixed permutation of $[k]$ \cite{klazar2000,MT2003}. Klazar \cite{klazar2000} reduced the problem to showing that forbidden permutation matrices have linear extremal functions, a problem that had been posed by F\"uredi and Hajnal \cite{FH1992}. Marcus and Tardos solved F\"uredi and Hajnal's problem in \cite{MT2003}, thus affirming the Stanley-Wilf conjecture. 

A 0-1 matrix $A$ \textit{contains} another 0-1 matrix $P$ if $A$ has a submatrix that can be transformed to $P$ by flipping any number of $1$-entries to $0$-entries. Otherwise, $A$ \textit{avoids} $P$ and is $P$-\textit{free}. The extremal function $\ex(n, P)$ is the maximum possible number of $1$-entries in a $P$-free $n \times n$ 0-1 matrix. Generalizing to a family of 0-1 matrices $\mathcal{P}$, $\ex(n,\mathcal{P})$ is the maximum possible number of $1$-entries in an $n\times n$ 0-1 matrix that avoids every element of $\mathcal{P}$. F\"uredi and Hajnal \cite{FH1992} and Tardos \cite{Tardos2005} initiated a campaign to determine the asymptotic behavior of the extremal function $\ex(n,P)$ for every forbidden 0-1 matrix with less than five $1$-entries. It is easy to see that $\ex(n, P) \ge n$ for any forbidden 0-1 matrix $P$ with at least two entries and a positive number of $1$-entries. However, there are forbidden 0-1 matrices $P, Q, R$ with four $1$-entries which have $\ex(n, P) = n \alpha(n)$, $\ex(n, Q) = n \log n$, and $\ex(n, R) = n^{3/2}$, where $\alpha(n)$ denotes the inverse Ackermann function \cite{Tardos2005}. 

A 0-1 matrix $A$ is $P$-\textit{saturated} for a 0-1 matrix $P$ if $A$ is $P$-free and changing any $0$-entry of $A$ produces a new 0-1 matrix that contains $P$. Matrix $A$ is $P$-\textit{semisaturated} if changing any $0$-entry of $A$ introduces a new copy of $P$ in $A$. Generalizing to a family of 0-1 matrices $\mathcal{P}$, a 0-1 matrix $A$ is $\mathcal{P}$-saturated if $A$ avoids every element of $\mathcal{P}$ and changing any $0$-entry of $A$ produces a 0-1 matrix that contains some element of $\mathcal{P}$. Matrix $A$ is $\mathcal{P}$-semisaturated if changing any $0$-entry of $A$ introduces a new copy of some element of $\mathcal{P}$. The saturation function $\sat(n,P)$ is the minimum possible number of $1$-entries in an $n\times n$ $P$-saturated 0-1 matrix, and $\ssat(n,P)$ is the minimum possible number of $1$-entries in an $n\times n$ $P$-semisaturated 0-1 matrix. Both functions can be naturally generalized to any family $\mathcal{P}$ of 0-1 matrices. Brualdi and Cao started the study of the saturation function of two-dimensional forbidden 0-1 matrices \cite{BC2021}. Fulek and Keszegh established that the saturation function of every two-dimensional 0-1 matrix is either bounded or $\Theta(n)$.
\subsection{Past results on extremal functions of forbidden 0-1 matrices}

Since the extremal function is at least linear for all forbidden 0-1 matrices except those with all zeroes or a single entry, F\"uredi and Hajnal \cite{FH1992} posed the problem of characterizing the forbidden 0-1 matrices $P$ for which $\ex(n, P) = O(n)$.

One way to approach F\"uredi and Hajnal's problem is to identify what are known as \textit{minimally nonlinear 0-1 matrices}. These are 0-1 matrices $P$ for which $\ex(n, P) = \omega(n)$, but $\ex(n, P') = O(n)$ for every 0-1 matrix $P'$ strictly contained in $P$. In a sense, minimally nonlinear 0-1 matrices delineate the border between linearity and nonlinearity for the forbidden 0-1 matrix extremal function, since any 0-1 matrix $P$ has a linear extremal function if and only if it contains no minimally nonlinear 0-1 matrix. Tardos \cite{Tardos2005} and Keszegh \cite{Keszegh2009} posed the problem of determining whether there are infinitely many minimally nonlinear 0-1 matrices. This was answered in the affirmative by Geneson \cite{Geneson2009} using a non-constructive proof involving a family of forbidden 0-1 matrices defined by Keszegh \cite{Keszegh2009}. A critical element of Geneson's proof was showing that $\ex(n, P) = O(n)$ for every tuple permutation matrix $P$. Later, the papers \cite{Crowdmath2018, GT2020} determined properties of minimally nonlinear 0-1 matrices such as bounds on the maximum number of $1$-entries and maximum number of columns in any minimally nonlinear 0-1 matrix with $k$ rows, as well as the number of minimally nonlinear 0-1 matrices with $k$ rows.

Besides determining bounds on $\ex(n,P)$ for specific 0-1 matrices $P$, another line of research in this area has investigated operations that can be performed on 0-1 matrices that only change their extremal functions by at most a constant factor. Several such operations can be found in \cite{Tardos2005, Keszegh2009, Pettie2011c}. These operations were also used in Geneson's proof of the existence of infinitely many minimally nonlinear 0-1 matrices. Related to these, Pach and Tardos proved a few operations that increase the asymptote of the extremal function by at most a multiplicative factor of $O(\log n)$ \cite{PT2006}.

In addition to two-dimensional 0-1 matrices, there is also an extension of the extremal function $\ex(n,P)$ to multidimensional 0-1 matrices. As in the two-dimensional case, a $d$-dimensional 0-1 matrix $A$ \textit{contains} a $d$-dimensional 0-1 matrix $P$ if $A$ has a submatrix that can be transformed to $P$ by flipping any number of $1$-entries to $0$-entries. Otherwise, $A$ \textit{avoids} $P$ and is $P$-\textit{free}. The extremal function $\ex(n, P, d)$ is the maximum possible number of $1$-entries in a $P$-free $d$-dimensional 0-1 matrix of dimensions $n \times n \times \dots \times n$. Similarly, for a family $\mathcal{P}$ of $d$-dimensional 0-1 matrices, $\ex(n,\mathcal{P},d)$ denotes the maximum possible number of $1$-entries in a $d$-dimensional 0-1 matrix of dimensions $n\times\dots\times n$ that avoids every element of $\mathcal{P}$. Generalizing from the two dimensional case, it is simple to show that $\ex(n, P, d) \ge n^{d-1}$ for any forbidden $d$-dimensional 0-1 matrix $P$ with at least two entries and a positive number of $1$-entries.

In \cite{KM2007}, Klazar and Marcus proved that $\ex(n,P,d) = O(n^{d-1})$ for any $d$-dimensional permutation matrix $P$, generalizing the result of Marcus and Tardos on two-dimensional permutation matrices \cite{MT2003}. Geneson and Tian \cite{GT2017} showed that $\ex(n,P,d) = O(n^{d-1})$ for any $d$-dimensional tuple permutation matrix $P$, extending Geneson's result on two-dimensional tuple permutation matrices \cite{Geneson2009}. They also obtained nontrivial bounds on the extremal functions of \textit{block permutation matrices}, i.e., Kronecker products of all-ones matrices and permutation matrices, extending a result of Hesterberg \cite{Hesterberg2012} for two-dimensional 0-1 matrices. Furthermore, in the same paper Geneson and Tian significantly tightened the bounds on the limit inferior and limit superior of the sequence $\frac{\ex(n,P,d)}{n^{d-1}}$ for permutation matrices, extending a result of Fox \cite{Fox2013} for two-dimensional 0-1 matrices. There has also been research \cite{Geneson2019, Geneson2021b} on operations which can be performed on $d$-dimensional 0-1 matrices $P$ to obtain new $d$-dimensional 0-1 matrices $P'$ for which $\ex(n, P', d)$ can be bounded sharply in terms of $\ex(n, P, d)$.

\subsection{Past results on saturation and semisaturation functions of forbidden 0-1 matrices}

Brualdi and Cao initiated the investigation of the saturation function of forbidden two-dimensional 0-1 matrices \cite{BC2021}, inspired by the corresponding saturation function for forbidden graphs \cite{EHM1964}. The 0-1 matrix $A$ is said to be \textit{$P$-saturated} for the forbidden 0-1 matrix $P$ if $A$ avoids $P$, but changing any $0$-entry to a $1$-entry in $A$ produces a new 0-1 matrix which contains $P$. The saturation function $\sat(n, P)$ is the minimum possible number of $1$-entries in any $n \times n$ 0-1 matrix which is $P$-saturated. Brualdi and Cao showed that $\ex(n, I_k) = \sat(n, I_k)=(k-1)(2n-(k-1))$, where $I_k$ denotes the $k \times k$ identity matrix. In other words, all $n \times n$ 0-1 matrices that are $I_k$-saturated have the same number of $1$-entries.

Fulek and Keszegh obtained a general upper bound on the saturation function $\sat(n,P)$ in terms of the dimensions of $P$, and proved that the saturation function is either bounded or linear \cite{FK2021}. They found a single 0-1 matrix with a bounded saturation function and posed the problem of finding more forbidden 0-1 matrices with bounded saturation functions. Geneson showed that almost all permutation matrices have bounded saturation functions, raised the question of the saturation function of multidimensional permutation matrices, and obtained results for multidimensional $r\times s\times1\times\ldots\times1$ 0-1 matrices \cite{Geneson2021}. Berendsohn fully characterized the permutation matrices with bounded saturation functions \cite{Berendsohn2021}. 

In addition to their results on saturation functions of forbidden 0-1 matrices, Fulek and Keszegh also introduced a notion of semisaturation for forbidden 0-1 matrices \cite{FK2021}. We say that the 0-1 matrix $A$ is \textit{$P$-semisaturated} for the forbidden 0-1 matrix $P$ if changing any $0$-entry to an $1$-entry in $A$ produces a new copy of $P$. In the definition of semisaturation, note that we do not require that $A$ avoids $P$. The semisaturation function $\ssat(n, P)$ is defined to be the minimum possible number of $1$-entries in any $n \times n$ 0-1 matrix which is $P$-semisaturated. In their paper \cite{FK2021}, Fulek and Keszegh characterized the forbidden 0-1 matrices $P$ with bounded semisaturation functions.

Tsai investigated saturation in $d$-dimensional 0-1 matrices and extended the definition of semisaturation to multidimensional 0-1 matrices \cite{Tsai2023}. He determined the saturation functions of $d$-dimensional identity matrices, generalizing the result of Brualdi and Cao for two-dimensional identity matrices. In the same paper \cite{Tsai2023}, he also generalized Fulek and Keszegh's characterization of two-dimensional 0-1 matrices with bounded semisaturation functions $\ssat(n,P)$ \cite{FK2021} to multidimensional 0-1 matrices.

\subsection{New results on extremal functions}

In the same way that $\ex(n, P) = \Omega(n)$ for all forbidden 0-1 matrices $P$ except those with all zeroes or a single entry, we also have that $\ex(n, P, d) = \Omega(n^{d-1})$ for all forbidden $d$-dimensional 0-1 matrices $P$ except those with all zeroes or a single entry. We investigate an extension of F\"{u}redi and Hajnal's problem, which is characterizing the forbidden $d$-dimensional 0-1 matrices $P$ for which $\ex(n, P, d) = O(n^{d-1})$.

One way to approach this problem is to identify what we call \textit{minimally non-$O(n^{d-1})$ $d$-dimensional 0-1 matrices}. We define a $d$-dimensional 0-1 matrix $P$ to be minimally non-$O(n^{d-1})$ if $\ex(n, P, d) = \omega(n^{d-1})$, but $\ex(n, P', d) = O(n^{d-1})$ for every $d$-dimensional 0-1 matrix $P'$ properly contained in $P$. Note that the minimally non-$O(n^{d-1})$ $d$-dimensional 0-1 matrices delineate the border between $\ex(n, P, d)$ being $O(n^{d-1})$ and $\omega(n^{d-1})$, since any $d$-dimensional 0-1 matrix $P$ satisfies $\ex(n, P, d) = O(n^{d-1})$ if and only if it contains no minimally non-$O(n^{d-1})$ $d$-dimensional 0-1 matrix. 

Note that the property of being minimally non-$O(n)$ is the same as the property of being minimally nonlinear for 0-1 matrices. Moreover, it is easy to see that for any minimally nonlinear 0-1 matrix $P$, we can generate a minimally non-$O(n^{d-1})$ $d$-dimensional 0-1 matrix for any $d > 2$. If $P$ has dimensions $j \times k$, consider the $d$-dimensional 0-1 matrix $Q$ of dimensions $j \times k \times 1 \times \dots \times 1$ which projects to $P$ on the first two dimensions. We have $\ex(n, Q, d) = \omega(n^{d-1})$ since $\ex(n, P) = \omega(n)$, but $\ex(n, Q', d) = O(n^{d-1})$ for every $d$-dimensional 0-1 matrix $Q'$ properly contained in $Q$. Thus $Q$ is minimally non-$O(n^{d-1})$.

In this paper, we identify several minimally non-$O(n^2)$ $3$-dimensional 0-1 matrices. We also obtain an upper bound on the largest dimension of a minimally non-$O(n^{d-1})$ $d$-dimensional 0-1 matrix in terms of its other dimensions, extending a result from \cite{Crowdmath2018}. Furthermore, we bound the number of $1$-entries in a minimally non-$O(n^{d-1})$ $d$-dimensional 0-1 matrix in terms of its dimensions, and we bound the total number of minimally non-$O(n^{d-1})$ $d$-dimensional 0-1 matrices with first $d-1$ dimensions $k_1 \times k_2 \times \dots \times k_{d-1}$. 

We make additional progress on the $d$-dimensional extension of F\"{u}redi and Hajnal's problem by proving that $\ex(n, P, d) = O(n^{d-1})$ for every $d$-dimensional 0-1 matrix with at most three $1$-entries. We also exhibit a new operation that can be performed on $d$-dimensional 0-1 matrices to obtain $(d+1)$-dimensional 0-1 matrices whose extremal functions are on the order of $n$ times the extremal function of the original matrix. We use this operation to show the existence of infinitely many minimally non-$O(n^{d-1})$ $d$-dimensional 0-1 matrices with all dimensions of length greater than $1$.

Finally, we investigate extremal functions of forbidden families of $d$-dimensional 0-1 matrices. We extend Tardos' technique in \cite{Tardos2005} to show that there exists an algorithm to assert that $\ex(n,\mathcal{P},d)=O(1)$ for a given family of $d$-dimensional 0-1 matrices $\mathcal{P}$, i.e., the algorithm terminates when the condition holds. Moreover when the condition does not hold, we have $\ex(n,\mathcal{P},d)\ge n$. The algorithm relies on a family of $d$-dimensional 0-1 matrices $\mathcal{J}_{n,d}\cup \mathcal{D}_{n,d}$ for every positive integer $n$. Moreover, the family is broad enough such that if for some positive integer $n_0$ every element in the family $\mathcal{J}_{n_0,d}\cup \mathcal{D}_{n_0,d}$ contains some element of $\mathcal{P}$, then $\ex(n,\mathcal{P},d)$ needs to be as low as $O(1)$ for a host matrix to avoid every element of $\mathcal{J}_{n_0,d}\cup \mathcal{D}_{n_0,d}$ and every element of $\mathcal{P}$.

We show that the extremal function of a family of $d$-dimensional 0-1 matrices is either $0$ or at least $n^{d-k}$ when the size of the family is less than $d$. Also, for any positive integer $k$ and $d$ and integer $r\in[0,d-1]$ we construct a family of $d$-dimensional 0-1 matrices with extremal function equal to $kn^r$ for sufficiently large $n$. We show that the constructed family has the fewest possible elements. Our new results on extremal functions of forbidden $d$-dimensional 0-1 matrices are contained in Section~\ref{sec:ex}. 

\subsection{New results on saturation and semisaturation functions}
We continue in the direction of \cite{Tsai2023} to show that the semisaturation function of every family of $d$-dimensional 0-1 matrices must be $\Theta(n^k)$ for some integer $k\in[0,d-1]$, and we give a complete characterization of such families. With that, for any positive integer $d$ and every integer $k\in[0,d-1]$ we construct a 0-1 matrix whose semisaturation function is $\Theta(n^k)$. Up to a constant multiplicative factor, these results settle the problem of characterizing the semisaturation functions of families of $d$-dimensional 0-1 matrices.

As for saturation functions, we generalize the method of Fulek and Keszegh in \cite{FK2021} to show that no family of $d$-dimensional 0-1 matrices has saturation function strictly between $O(1)$ and $\Theta(n)$. We propose an interesting construction that gives a family of $d$-dimensional 0-1 matrices with saturation function as small as $O(1)$ and extremal function as large as $\Omega(n^{d-\epsilon})$ for any $\epsilon>0$. Moreover, we show for any positive integer $k$ and integer $r \in [0, d-1]$ that it is possible to construct a family of $d$-dimensional 0-1 matrices with saturation function equal to $k n^r$ for all $n$ sufficiently large.

Our new results on saturation functions and semisaturation functions of forbidden $d$-dimensional 0-1 matrices are contained in Section~\ref{sec:sat}. In Section~\ref{sec:con}, we discuss future directions for research on extremal functions, saturation functions, and semisaturation functions of forbidden patterns in multidimensional 0-1 matrices.

\section{Notation}
The \textit{weight} of a 0-1 matrix $A$ is the number of $1$-entries in $A$, denoted $w(A)$.
For positive integer $d$, denote $\{1,2,\ldots,d\}$ by $[d]$. We denote a $d$-dimensional $n_1\times n_2\times\ldots\times n_d$ matrix by $A=A(x_1,\ldots,x_d)$, where $x_i\in[n_i]$ for each $i\in [d]$. When we \textit{exchange} distinct dimensions $i,j$ of matrix $A$ to obtain matrix $B$, we exchange the side lengths of these two dimensions and make $B(x'_1,x'_2,\dots,x'_d)=A(x_1,x_2,\dots,x_d)$ where  $(x'_1,x'_2,\dots,x'_d)$ is obtained from $(x_1,x_2,\dots,x_d)$ by exchanging the $i^{\text{th}}$ and $j^{\text{th}}$ coordinates. To \textit{replicate} dimension $i$ of a $d$-dimensional matrix $P$ of dimensions $l_1\times l_2\times\dots\times\ l_{d-1}\times l_d$ is to have a $(d+1)$-dimensional matrix $P'$ of dimensions $l_1\times l_2\times\dots\times l_{d-1}\times l_d\times l_i$ where every $1$-entry $P(x_1,x_2,\dots,x_d)$ is replaced by $1$-entry $P'(x_1,x_2,\dots,x_d,x_i)$ and all other entries of $P'$ are $0$-entries.

A \textit{k-dimensional cross section} $L$ of a $d$-dimensional $n_1\times n_2\times\ldots\times n_d$ matrix $A$ is the set of all entries of $A$ whose coordinates on a set $C_L$ of $d-k$ dimensions are fixed. A cross section $L$ of matrix $A$ is a \textit{face} if for every $i\in C_L$, the value of the $i^{\text{th}}$ coordinate of every entry of $L$ is fixed to some $b_i\in\{1,n_i\}$. A face $f_B$ of an $n_{B_1}\times\dots\times n_{B_d}$ 0-1 matrix $B$ and a face $f_A$ of an $n_{A_1}\times\dots\times n_{A_d}$ 0-1 matrix $A$ are \textit{counterparts} of each other if $A$ and $B$ are both $d$-dimensional for some positive integer $d$, $f_A$ and $f_B$ have the same maximal set of dimensions $C_{f_A}=C_{f_B}$ on which their entries have fixed coordinates, and on every dimension $i\in C_{f_A}$ the fixed coordinate $b_{A_i}$ in $f_A$ and the fixed coordinate $b_{B_i}$ in $f_B$ satisfy $b_{A_i}=1$ and $b_{B_i}=1$ or $b_{A_i}=n_{A_i}$ and $b_{B_i}=n_{B_i}$.
An $i$-\textit{row} of matrix $A$ is a cross section $L$ with $C_L=[d]\setminus\{i\}$.
An $i$-\textit{layer} of matrix $A$ is a cross section $L$ with $C_L=\{i\}$. Two cross sections $L$ and $K$ are \textit{orthogonal} to each other if $C_L\not\subseteq C_K$ and $C_K\not\subseteq C_L$.
Cross section $g$ is $k$-\textit{orthogonal} to cross section $f$ if $g$ and $f$ are orthogonal to each other, and $|C_g\setminus C_f|\geq k$. Note that if cross sections $g$ and $f$ are orthogonal to each other, then $g$ is $1$-orthogonal to $f$ and $f$ is $1$-orthogonal to $g$.

We represent a $3$-dimensional matrix by specifying all its $1$-layers sorted by their first coordinates. For example the following $3$-dimensional matrix $A$ is of dimensions $2\times3\times1$.
$$
\begin{pmatrix}
    \begin{pmatrix}
      A(1,1,1) \\
      A(1,2,1) \\
      A(1,3,1) \\
    \end{pmatrix};
    \begin{pmatrix}
      A(2,1,1) \\
      A(2,2,1) \\
      A(2,3,1) \\
    \end{pmatrix}
\end{pmatrix}
$$

\section{Extremal functions of $d$-dimensional 0-1 matrices}\label{sec:ex}

We begin with the fundamental \textit{monotonicity} property of extremal function with respect to pattern containment. Although it is stated in terms of 0-1 matrices, it applies to any kind of pattern such as graph, ordered graph, poset, etc. as long as containment is transitive, i.e., if pattern $A$ contains pattern $B$ and pattern $B$ contains pattern $C$ then pattern $A$ contains pattern $C$. The proposition is simple and well-known, so it is hard to find its exact reference.

\begin{prop}
    \label{prop:mono}
    If $P$ and $Q$ are $d$-dimensional 0-1 matrices and $P$ contains $Q$, then $\ex(n,P,d)\ge\ex(n,Q,d)$.
\end{prop}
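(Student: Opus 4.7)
The plan is to argue directly from the transitivity of the containment relation, using the contrapositive. Specifically, I would take an extremal witness for $\ex(n,Q,d)$ and show that it is automatically a legitimate witness for $\ex(n,P,d)$, which gives the desired inequality.

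In detail, first I would fix an $n \times n \times \cdots \times n$ $d$-dimensional 0-1 matrix $A$ that is $Q$-free and has $w(A) = \ex(n,Q,d)$ many $1$-entries; such a matrix exists by definition of the extremal function. Next I would verify that $A$ is also $P$-free. Suppose for contradiction that $A$ contained $P$. Since containment is transitive and $P$ contains $Q$ by hypothesis, we would conclude that $A$ contains $Q$, contradicting the choice of $A$. Hence $A$ is $P$-free, and so $\ex(n,P,d) \ge w(A) = \ex(n,Q,d)$.

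There is no real obstacle here: the only ingredient beyond the definitions is the transitivity of the matrix-containment relation, which is immediate from the definition (a submatrix of a submatrix is a submatrix, and composing two entry-flipping operations yields an entry-flipping operation). The argument is identical in spirit to the analogous monotonicity fact for forbidden graphs, ordered graphs, and posets, as the proposition statement already notes.
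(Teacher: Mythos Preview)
Your proof is correct and is essentially the same as the paper's: both rely on the observation, via transitivity of containment, that every $Q$-free matrix is automatically $P$-free. The paper phrases this as a set inclusion between the $P$-free and $Q$-free matrices, while you instantiate it with an extremal witness for $\ex(n,Q,d)$, but the content is identical.
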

\begin{proof}
    It suffices to show that the set of $P$-free matrices is the superset of the set of $Q$-free matrices. This is true, because if any $d$-dimensional matrix avoids $Q$ then it cannot contain $P$.
\end{proof}

Next we state a basic quantitative fact of the extremal functions of $d$-dimensional 0-1 matrices.
\begin{prop}\cite{GT2017}
\label{prop:2ones}
   If $P$ is a $d$-dimensional 0-1 matrix with at least two $1$-entries, then $n^{d-1}\le\ex(n,P,d)$.
\end{prop}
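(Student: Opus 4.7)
The plan is to give a direct construction that realizes the lower bound for every $n$. Since $P$ has at least two $1$-entries, any two distinct $1$-entries of $P$ must differ in at least one coordinate, so there exists a dimension $i\in[d]$ such that two $1$-entries of $P$ have distinct $i$-th coordinates. Fix such an $i$, and define a $d$-dimensional $n\times n\times\cdots\times n$ 0-1 matrix $M$ by
$$M(x_1,x_2,\dots,x_d)=1 \iff x_i=1.$$
Then $M$ is the indicator of a single $(d-1)$-dimensional face of the ambient cube and therefore has exactly $n^{d-1}$ ones.

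The remaining task is to verify that $M$ is $P$-free. Suppose for contradiction that $M$ contains $P$. Such a containment provides, for each dimension $j\in[d]$, a strictly increasing choice of indices $1\le r^{(j)}_1<r^{(j)}_2<\cdots<r^{(j)}_{k_j}\le n$, where $k_j$ is the side length of $P$ in dimension $j$, with the property that every $1$-entry of $P$ at position $(j_1,\dots,j_d)$ maps to a $1$-entry of $M$ at position $(r^{(1)}_{j_1},\dots,r^{(d)}_{j_d})$. In particular, the $i$-th coordinate of that image is $r^{(i)}_{j_i}$, and by the defining property of $M$ we need $r^{(i)}_{j_i}=1$. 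Since $r^{(i)}_1<r^{(i)}_2<\cdots$, this forces $j_i=1$. Hence every $1$-entry of $P$ would have $i$-th coordinate equal to $1$, contradicting the choice of $i$.

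The argument is entirely elementary; the one place where the hypothesis $w(P)\ge 2$ is used is in selecting the dimension $i$, and once that selection is made the verification that $M$ avoids $P$ is immediate from the monotonicity of the embedding coordinates. I do not expect any real obstacle.
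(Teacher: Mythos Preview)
Your proof is correct and follows essentially the same approach as the paper: pick a coordinate direction in which two $1$-entries of $P$ differ, and take $M$ to be the indicator of a single $i$-layer. The paper simply invokes ``without loss of generality'' to set $i=1$ and omits the detailed verification that $M$ avoids $P$, whereas you spell out the embedding argument explicitly, but the underlying construction and reasoning are identical.
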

\begin{proof}
    Without loss of generality suppose that $P$ has two $1$-entries that differ in the first coordinate. Construct a $d$-dimensional 0-1 matrix $M$ of dimensions $n\times n\times\dots\times n$ where $M(x_1,\dots,x_d)=1$ if and only if $x_1=1$. The result follows as $M$ avoids $P$ and has $n^{d-1}$ $1$-entries.
\end{proof}

Define a \textit{projection} $\bar{P}$ of a $d$-dimensional 0-1 matrix $P$ \textit{along} dimension $d$, or \textit{on} the first $d-1$ dimensions, to be the $(d-1)$-dimensional 0-1 matrix obtained from $P$ by setting $\bar{P}(x_1,\ldots,x_{d-1})=1$ if and only if there exists $x_d$ such that $P(x_1,\ldots,x_{d-1},x_d)=1$. We can define a projection along any other dimension in a similar way. For distinct integers $i,j\in[d]$ define a $2$-dimensional projection $P'$ of $P$ on dimensions $i,j$ as the $2$-dimensional 0-1 matrix obtained from $P$ by setting $P'(k,l)=1$ if and only if there exists a $1$-entry $P(x_1,\dots,x_d)$ where $x_i=k$ and $x_j=l$.

In \cite{GT2017}, projection is defined as collapsing all but the first two dimensions, and Lemma 4.6 of \cite{GT2017} gives a lower bound of the extremal function of $P$ in terms of the extremal function of its projection. Successive application of the following lemma also arrives at the same lower bound.
\begin{lem}\label{lem:projection}
$\ex(n,P,d)=\Omega(n \ex(n,\bar{P},d-1))$.
\end{lem}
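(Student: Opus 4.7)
The plan is to prove the lemma by an explicit extrusion construction: take an extremal $\bar P$-free $(d-1)$-dimensional matrix and stack $n$ identical copies of it along the new $d$-th dimension, then verify that the resulting $d$-dimensional matrix still avoids $P$.

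In more detail, first I would let $M$ be a $(d-1)$-dimensional 0-1 matrix of side length $n$ in every dimension that avoids $\bar P$ and attains $w(M)=\ex(n,\bar P,d-1)$. Then I would define the $d$-dimensional matrix $M'$ of dimensions $n\times\cdots\times n$ by
\[
M'(x_1,\ldots,x_{d-1},x_d)\;=\;M(x_1,\ldots,x_{d-1})\qquad\text{for every }x_d\in[n].
\]
By construction $w(M')=n\cdot w(M)=n\cdot\ex(n,\bar P,d-1)$, so the desired bound follows immediately once we check that $M'$ avoids $P$.

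The key step is this avoidance check. Suppose for contradiction that $M'$ contains $P$, witnessed by strictly increasing index sequences $i^{(t)}_1<\cdots<i^{(t)}_{l_t}$ in each dimension $t\in[d]$, where $P$ has dimensions $l_1\times\cdots\times l_d$. For every $1$-entry $\bar P(y_1,\ldots,y_{d-1})$, the definition of the projection $\bar P$ supplies some $y_d\in[l_d]$ with $P(y_1,\ldots,y_d)=1$; the witnessing condition gives $M'(i^{(1)}_{y_1},\ldots,i^{(d)}_{y_d})=1$, and by the defining identity of $M'$ this means $M(i^{(1)}_{y_1},\ldots,i^{(d-1)}_{y_{d-1}})=1$. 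Thus the $(d-1)$-dimensional submatrix of $M$ on the index sequences $i^{(1)},\ldots,i^{(d-1)}$ contains $\bar P$, contradicting the choice of $M$.

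The argument involves no real obstacle beyond getting the indexing right in the above paragraph: the extrusion preserves $1$-entries along the new dimension, so any copy of $P$ in $M'$ projects back to a copy of $\bar P$ in $M$. I would note in passing that the proof actually yields the sharper inequality $\ex(n,P,d)\ge n\cdot\ex(n,\bar P,d-1)$, so the $\Omega$ comes with implied constant $1$, and that the same argument applies verbatim to projection along any other dimension by first exchanging dimensions.
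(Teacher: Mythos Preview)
Your proof is correct and follows essentially the same approach as the paper: both construct the $d$-dimensional matrix by making every $d$-layer an identical copy of an extremal $\bar P$-free $(d-1)$-dimensional matrix, then argue that a copy of $P$ would project to a copy of $\bar P$. Your version simply spells out the index-chasing that the paper compresses into a single sentence.
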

\begin{proof}
We construct a $d$-dimensional 0-1 matrix $M$ that avoids $P$ as follows. All $d$-layers of $M$ are identical with $\ex(n,\bar{P},d-1)$ $1$-entries and avoid $\bar{P}$. If $M$ contained $P$, then each $d$-layer of $M$ would contain $\bar{P}$,  a contradiction.
\end{proof}

\begin{cor}
  \label{cor:2dprojection}
  Let $P'$ be a $2$-dimensional projection of a $d$-dimensional matrix $P$ where $d>2$. Then $\ex(n,P,d)=\Omega(n^{d-2}\ex(n,P'))$.
\end{cor}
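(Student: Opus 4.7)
The plan is to apply Lemma 2.3 (the one-dimensional projection inequality) iteratively $d-2$ times, reducing the dimension by one each time until only the two dimensions of $P'$ remain.

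First I would reduce to the case that $P'$ is the projection on the first two dimensions. Since exchanging dimensions of $P$ yields a matrix whose extremal function agrees with $\ex(n,P,d)$ (exchanging is a bijection on $P$-free matrices of dimensions $n\times\cdots\times n$), I can permute the dimensions of $P$ so that the pair $\{i,j\}$ appearing in the definition of $P'$ becomes $\{1,2\}$. After this relabeling, $P'$ is exactly the $2$-dimensional matrix obtained from $P$ by successively projecting along dimensions $d,\ d-1,\ \ldots,\ 3$, because the definition $P'(k,\ell)=1$ iff some $1$-entry of $P$ has first coordinate $k$ and second coordinate $\ell$ coincides with the composition of one-dimensional projections that forget each of the last $d-2$ coordinates in turn.

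Next, I would define, for $2\le m\le d$, the $m$-dimensional projection $P^{(m)}$ of $P$ onto the first $m$ dimensions, so that $P^{(d)}=P$, $P^{(2)}=P'$, and each $P^{(m-1)}$ is the one-dimensional projection of $P^{(m)}$ along its last dimension in the sense of Lemma 2.3. Applying that lemma for $m=d,\ d-1,\ \ldots,\ 3$ gives a telescoping chain
\begin{equation*}
\ex(n,P,d)=\Omega\!\left(n\,\ex(n,P^{(d-1)},d-1)\right)=\Omega\!\left(n^{2}\,\ex(n,P^{(d-2)},d-2)\right)=\cdots=\Omega\!\left(n^{d-2}\,\ex(n,P',2)\right),
\end{equation*}
which is the desired bound (writing $\ex(n,P')$ for $\ex(n,P',2)$ as elsewhere in the paper).

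The only subtle point is making sure the constants hidden in the successive $\Omega$'s multiply to a single constant. This is fine because we apply Lemma 2.3 a fixed number ($d-2$) of times and $d$ is a constant with respect to $n$, so the product of the constants is itself a constant depending only on $d$. Apart from this bookkeeping, the argument is purely a routine iteration, so I do not anticipate a real obstacle; the main thing to get right is the identification of $P'$ with the iterated one-dimensional projection, which is immediate from the definitions once the dimensions are relabeled.
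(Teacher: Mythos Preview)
Your proposal is correct and follows exactly the approach the paper intends: the corollary is stated immediately after Lemma~\ref{lem:projection} with no separate proof, and the text preceding that lemma already notes that successive application of it yields the two-dimensional bound. Your explicit handling of the dimension permutation and the telescoping chain is more detailed than what the paper writes, but the underlying argument is the same.
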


In the next theorem, we exhibit a new operation on $d$-dimensional 0-1 matrices $P$ which produces a $(d+1)$-dimensional 0-1 matrix whose extremal function is on the order of $n$ times the extremal function of $P$. The operation can be described as using the new dimension to produce a diagonal version of the original forbidden pattern.

\begin{thm}\label{thm:stretch}
Suppose that $P$ is a $d$-dimensional 0-1 matrix of dimensions $k_1 \times \dots \times k_d$ and $P’$ is a $(d+1)$-dimensional 0-1 matrix of dimensions $k_1 \times \dots \times k_d \times k_d$ obtained from $P$ by replacing each $1$-entry at coordinate $(x_1, \dots, x_d)$ with a $1$-entry at coordinate $(x_1, \dots, x_d, x_d)$. Then $\ex(n, P’, d+1) = \Theta(n \ex(n, P, d))$. 
\end{thm}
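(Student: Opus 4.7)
The result splits into two directions. For the lower bound $\ex(n, P', d+1) = \Omega(n \ex(n, P, d))$, my plan is to invoke Lemma~\ref{lem:projection} after observing that the projection of $P'$ along its last dimension recovers $P$ exactly: each $1$-entry $P(x_1, \ldots, x_d)$ contributes the unique $1$-entry $P'(x_1, \ldots, x_d, x_d)$, and projecting this entry onto its first $d$ coordinates returns $(x_1, \ldots, x_d)$, so the projection is precisely $P$. Lemma~\ref{lem:projection} then gives $\ex(n, P', d+1) = \Omega(n \ex(n, P, d))$ directly.

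For the upper bound $\ex(n, P', d+1) = O(n \ex(n, P, d))$, the plan is to exploit the diagonal structure of $P'$ by partitioning the $(x_d, x_{d+1})$-positions of any $P'$-free host $A$ into strictly monotone chains in the last two dimensions and collapsing each chain into a $d$-dimensional $P$-free auxiliary matrix. Concretely, for each integer $k$ with $-(n-1) \leq k \leq n - 1$, let $\Delta_k = \{(u, v) \in [n]^2 : v - u = k\}$; this is a chain in the strict product order on $[n]^2$, so its elements admit a listing $(u_1^k, v_1^k), \ldots, (u_{|\Delta_k|}^k, v_{|\Delta_k|}^k)$ in which both coordinates strictly increase with $t$. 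I would then define the $d$-dimensional matrix $D_k$ of dimensions $n \times \cdots \times n \times |\Delta_k|$ by $D_k(x_1, \ldots, x_{d-1}, t) = A(x_1, \ldots, x_{d-1}, u_t^k, v_t^k)$. Since the $2n - 1$ diagonals $\Delta_k$ partition $[n]^2$, every $1$-entry of $A$ lies in exactly one $D_k$, so $w(A) = \sum_k w(D_k)$.

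The crux of the argument---and the step I expect to be most delicate---is showing that each $D_k$ avoids $P$, which I would prove by contrapositive. Any embedding of $P$ into $D_k$ gives coordinates $\alpha_i^j$ for $i \in [d]$ and $j \in [k_i]$, strictly increasing in $j$, with $D_k(\alpha_1^{x_1}, \ldots, \alpha_d^{x_d}) = 1$ for every $1$-entry $P(x_1, \ldots, x_d)$. Setting $\hat{\alpha}_d^j := u^k_{\alpha_d^j}$ and $\beta^j := v^k_{\alpha_d^j}$, the strict monotonicity of $u^k_t$ and $v^k_t$ in $t$ forces both $\hat{\alpha}_d^j$ and $\beta^j$ to be strictly increasing in $j$, and the definition of $D_k$ immediately translates the hypothetical embedding into a copy of $P'$ in $A$ at positions $(\alpha_1^{x_1}, \ldots, \alpha_{d-1}^{x_{d-1}}, \hat{\alpha}_d^{x_d}, \beta^{x_d})$, contradicting $P'$-freeness. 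Hence each $D_k$ is $P$-free; padding the last dimension up to length $n$ with zeros preserves $P$-freeness, giving $w(D_k) \leq \ex(n, P, d)$. Summing over the $2n - 1$ diagonals yields $w(A) \leq (2n - 1)\,\ex(n, P, d) = O(n \ex(n, P, d))$, which combined with the lower bound completes the proof.
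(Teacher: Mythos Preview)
Your proposal is correct and follows essentially the same approach as the paper: the lower bound via Lemma~\ref{lem:projection} is identical, and for the upper bound the paper likewise slices $A$ into the $2n-1$ diagonals $\{x_{d+1}-x_d=i\}$, projects each onto the first $d$ coordinates to obtain a $P$-free $d$-dimensional matrix, and sums. Your version re-indexes the diagonal as $D_k$ of length $|\Delta_k|$ and then pads, whereas the paper projects directly to an $n\times\cdots\times n$ matrix $B_i$; these are the same construction, and your contrapositive embedding argument is just a more explicit rendering of the paper's one-line claim that $B_i$ contains $P$ only if $A$ contains $P'$.
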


\begin{proof}
The lower bound $\ex(n, P’, d+1) = \Omega(n \ex(n, P, d))$ follows from Lemma~\ref{lem:projection}. For the upper bound, consider a $P’$-free $(d+1)$-dimensional 0-1 matrix $A$ of dimensions $n \times n \times \dots \times n$. We split $A$ into $2n-1$ tilted cross sections $A_{-n+1}, \dots, A_0, \dots, A_{n-1}$ such that $A_i$ consists of the entries $(x_1, \dots, x_d, x_{d+1})$ of $A$ with $x_{d+1} = x_{d}+i$. Note that $A_i$ consists of $(n-|i|)n^{d-1}$ entries for all $i = -n+1, \dots, n-1$.

If $B_i$ denotes the $d$-dimensional 0-1 matrix obtained from $A_i$ by projecting onto the first $d$ dimensions, note that $B_i$ must avoid $P$ or else $A$ would contain $P’$. Furthermore, $B_i$ and $A_i$ have the same number of entries since any two distinct entries $(x_1, \dots, x_d, x_{d+1})$ and $(x’_1, \dots, x’_d, x’_{d+1})$ in $B_i$ must have $(x_1, \dots, x_d) \neq (x’_1, \dots, x’_d)$. Thus $B_i$ has at most $\ex(n, P, d)$ $1$-entries for all $i = -n+1, \dots, n-1$, so $A$ has at most $(2n-1)\ex(n, P, d) = O(n \ex(n, P, d))$ $1$-entries.
\end{proof}

By applying Theorem~\ref{thm:stretch} $k$ times we get the following corollary.
\begin{cor}\label{thm:general-stretch}
Suppose that $P$ is a $d$-dimensional 0-1 matrix of dimensions $l_1 \times \dots \times l_d$ and $P’$ is a $(d+k)$-dimensional 0-1 matrix obtained from $P$ by replicating the last dimension $k$ times, i.e., $P'$ is of dimensions $l_1 \times \dots \times l_d \times l_d \times \dots \times l_d$ such that $P'(x_1,\dots,x_{d+k})=1$ if and only if $P(x_1,\dots,x_d)=1$ and $x_d=x_{d+1}=\dots=x_{d+k}$.
Then $\ex(n, P’, d+k) = \Theta(n^k \ex(n, P, d))$. 
\end{cor}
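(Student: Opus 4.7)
The plan is to prove the corollary by straightforward induction on $k$, using Theorem~\ref{thm:stretch} as the single-step operation. I would denote by $P^{(j)}$ the $(d+j)$-dimensional matrix obtained by replicating the last dimension of $P$ exactly $j$ times, so that $P^{(0)}=P$ and $P^{(k)}=P'$. The base case $k=0$ is trivial since $\ex(n,P,d)=\Theta(n^{0}\ex(n,P,d))$. For the inductive step, I would assume $\ex(n,P^{(k-1)},d+k-1)=\Theta(n^{k-1}\ex(n,P,d))$ and aim to derive the bound for $P^{(k)}$.

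The crux is to verify that Theorem~\ref{thm:stretch}, applied with ``$d$'' replaced by $d+k-1$ and input matrix $P^{(k-1)}$, produces exactly $P^{(k)}$. By construction every $1$-entry of $P^{(k-1)}$ has coordinates of the form $(x_1,\dots,x_d,x_d,\dots,x_d)$, so in particular its last coordinate equals $x_d$. Theorem~\ref{thm:stretch} appends a new final dimension whose coordinate duplicates the existing last coordinate, producing the $1$-entry $(x_1,\dots,x_d,x_d,\dots,x_d,x_d)$ with one extra copy of $x_d$; this is precisely the defining condition for $P^{(k)}$. Therefore Theorem~\ref{thm:stretch} yields $\ex(n,P^{(k)},d+k)=\Theta(n\,\ex(n,P^{(k-1)},d+k-1))$, and chaining with the inductive hypothesis gives $\Theta(n^k\ex(n,P,d))$ as desired.

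I do not anticipate any substantive obstacle; the argument is essentially notational bookkeeping. The only point that deserves explicit mention is that Theorem~\ref{thm:stretch} is stated for an arbitrary dimension, so it applies with ``$d$'' interpreted as the current ambient dimension at each stage of the induction, and the dimensions $l_1\times\dots\times l_d\times l_d\times\dots\times l_d$ are preserved appropriately because each stretch only adds one factor of length $l_d$. Once this alignment between the output of the $(k-1)$-th stretch and the input of the $k$-th stretch is confirmed, the telescoping product of $k$ factors of $n$ delivers the stated $\Theta(n^k\ex(n,P,d))$ bound.
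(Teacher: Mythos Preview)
Your proposal is correct and matches the paper's approach exactly: the paper simply states that the corollary follows by applying Theorem~\ref{thm:stretch} $k$ times, and your induction on $k$ is precisely that, with the added bookkeeping that each application of Theorem~\ref{thm:stretch} to $P^{(k-1)}$ yields $P^{(k)}$.
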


The corollary above implies that there are infinitely many minimally non-$O(n^{d-1})$ $d$-dimensional 0-1 matrices with all dimensions of length greater than $1$. Note that one can construct infinitely many minimally non-$O(n^{d-1})$ $d$-dimensional 0-1 matrices by embedding all minimally nonlinear matrices in $d$-dimensional matrices, and each of the resulting matrices has only two dimensions of length greater than $1$. In a sense, the construction in the proof of the following corollary produces minimally non-$O(n^{d-1})$ $d$-dimensional 0-1 matrices that are less degenerate.
\begin{cor}
There are infinitely many minimally non-$O(n^{d-1})$ $d$-dimensional 0-1 matrices with all dimensions of length greater than $1$.
\end{cor}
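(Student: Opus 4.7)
The plan is to combine Geneson's result~\cite{Geneson2009}---that there exist infinitely many minimally nonlinear $2$-dimensional 0-1 matrices---with the stretching construction of Corollary~\ref{thm:general-stretch}. Let $\{P_m\}_{m\ge 1}$ be an infinite family of distinct minimally nonlinear $2$-dimensional matrices. A short preliminary observation shows that each $P_m$ has no empty row or column: inserting or removing an empty row alters the extremal function only by a constant factor for sufficiently large hosts, so an empty row would produce a proper subpattern with nonlinear extremal function, contradicting minimality. In particular both dimensions of $P_m$ exceed $1$. For each $m$, apply Corollary~\ref{thm:general-stretch} with $k=d-2$ to construct a $d$-dimensional matrix $P'_m$ of dimensions $k_1^{(m)}\times k_2^{(m)}\times k_2^{(m)}\times\cdots\times k_2^{(m)}$ (all greater than $1$) satisfying $\ex(n,P'_m,d)=\Theta(n^{d-2}\ex(n,P_m,2))=\omega(n^{d-1})$. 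The matrices $P'_m$ are pairwise distinct because one recovers $P_m$ by projecting $P'_m$ onto its first two coordinates.

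The main step is to verify that each $P'_m$ is itself minimally non-$O(n^{d-1})$. Let $Q$ be a proper subpattern of $P'_m$ obtained from a subshape $S_1\times\cdots\times S_d$ of $P'_m$ possibly followed by deleting some $1$-entries, and set $T=S_2\cap\cdots\cap S_d$. The $1$-entries of $Q$ correspond bijectively to a subset of the $1$-entries of $P_m$ restricted to $S_1\times T$, so the $2$-dimensional projection $\bar Q$ of $Q$ onto its first two coordinates is a subpattern of $P_m$. Since $P_m$ has no empty rows or columns, $\bar Q=P_m$ would force $S_1=[k_1^{(m)}]$ and $T=[k_2^{(m)}]$, hence $S_i=[k_2^{(m)}]$ for every $i\ge 2$ and therefore $Q=P'_m$, contradicting properness. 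Thus $\bar Q$ is a proper subpattern of $P_m$, and the minimal nonlinearity of $P_m$ yields $\ex(n,\bar Q,2)=O(n)$.

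To bound $\ex(n,Q,d)$ from above, observe that the $1$-entries of $Q$ obey functional relations $x_i=f_i(x_2)$ for $i=3,\dots,d$: in $P'_m$ all of $x_2,\dots,x_d$ agree for any $1$-entry, and the order-preserving reindexings induced by the subshape preserve this dependence. A tilted cross-section argument in the spirit of the proof of Theorem~\ref{thm:stretch}---partitioning a $Q$-free host matrix $A$ into $O(n^{d-2})$ pieces indexed by the offsets of $x_3,\dots,x_d$ from the values predicted by $x_2$---shows that each piece, after projection onto dimensions $1$ and $2$, avoids $\bar Q$. Summing yields $\ex(n,Q,d)=O(n^{d-2}\ex(n,\bar Q,2))=O(n^{d-1})$, so $P'_m$ is minimally non-$O(n^{d-1})$, and the family $\{P'_m\}$ is the desired infinite collection. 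The main technical hurdle is precisely this generalization of Theorem~\ref{thm:stretch}'s tilted cross-section argument, since proper subpatterns of a stretch can exhibit a padded and possibly permuted diagonal structure rather than the clean diagonal in that proof.
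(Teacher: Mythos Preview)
Your overall strategy coincides with the paper's: stretch each minimally nonlinear $2$-dimensional matrix $P_m$ via Corollary~\ref{thm:general-stretch} and check that the resulting $P'_m$ is minimally non-$O(n^{d-1})$. You are in fact more careful than the paper about several points (distinctness of the $P'_m$, absence of empty rows and columns in $P_m$, why $\bar Q$ must be a proper subpattern of $P_m$).

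Where you diverge is in the final step, and there your sketch is both more complicated than necessary and not quite correct as stated. First, there is no ``possibly permuted'' case: your functions $f_i=s_i^{-1}\circ s_2$ are compositions of order-preserving injections and hence strictly increasing. Second, the offset-based tilted cross-section argument has a genuine gap when $Q$ has empty $i$-layers (for some $i\ge 3$) sandwiched between non-empty ones: finding a copy of $\bar Q$ in a standard diagonal slice of the host then need not produce a copy of $Q$, because the embedding in dimension $i$ may leave no room to accommodate those intermediate empty layers. The fix---which is what the paper does, though it states it tersely---is to first strip all empty $i$-layers of $Q$ in dimensions $i\ge 2$ using Lemmas~\ref{lem:attach1} and~\ref{lem:insert1}; this changes $\ex(n,Q,d)$ by at most a constant factor. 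Because each $f_i$ is strictly increasing, the cleaned-up matrix has dimensions $2,\dots,d$ all of the same length and $1$-entries only at positions $(y_1,j,j,\dots,j)$; that is, it is \emph{literally} the stretch of some $2$-dimensional matrix $R$ contained in $\bar Q$. You already argued $\bar Q$ is a proper subpattern of $P_m$, hence so is $R$, and now Corollary~\ref{thm:general-stretch} applied to $R$ gives $\ex(n,Q,d)=O(n^{d-2}\ex(n,R,2))=O(n^{d-1})$ directly. No generalized cross-section argument is needed.
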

\begin{proof}
The statement holds for $d=2$. For $d>2$, apply the operation in Corollary~\ref{thm:general-stretch} to each minimally nonlinear $2$-dimensional 0-1 matrix $P$ to obtain a $d$-dimensional 0-1 matrix $P'$. By Corollary~\ref{thm:general-stretch} we have $\ex(n,P',d)=\omega(n^{d-1})$. Any proper submatrix of $P'$ is equal to the result of applying the same operation to some proper submatrix of $P$ and thus has extremal function $O(n^{d-1})$. Therefore the construction gives infinitely many minimally non-$O(n^{d-1})$ $d$-dimensional 0-1 matrices with all dimensions of length greater than $1$.
\end{proof}

The following two lemmas from \cite{GHLNPW2019} generalize their two-dimensional counterparts from \cite{FH1992}.

\begin{lem}\label{lem:add-one} \cite{GHLNPW2019} Let a $d$-dimensional 0-1 matrix $P'$ be obtained from a $d$-dimensional 0-1 matrix $P$ by adding a $1$-layer with a single $1$-entry that is adjacent to another $1$-entry. Then $\ex(n,P',d)\leq n^{d-1}+\ex(n,P,d)$.
\end{lem}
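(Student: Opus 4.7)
The plan is to imitate the classical two-dimensional deletion argument of Füredi and Hajnal: given a $P'$-free matrix $M$ of dimensions $n\times\cdots\times n$, exhibit a set $S$ of at most $n^{d-1}$ $1$-entries whose removal produces a $P$-free matrix $M'$. Once this is established, we conclude $w(M)\le w(M')+|S|\le \ex(n,P,d)+n^{d-1}$, and since $M$ was arbitrary the claimed bound follows. By permuting dimensions, I may assume without loss of generality that the added layer is the ``last'' $1$-layer in dimension $1$; that is, $P'$ has dimensions $(k_1+1)\times k_2\times\cdots\times k_d$, with a unique $1$-entry at some position $(k_1+1,y_2,\ldots,y_d)$ in the new layer, adjacent to a $1$-entry of $P$ at $(k_1,y_2,\ldots,y_d)$.

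For the deletion strategy, I would let $S$ consist of the ``topmost'' $1$-entries of $M$ in dimension $1$: for each $1$-row of $M$ (indexed by a tuple $(x_2,\ldots,x_d)$), include in $S$ the $1$-entry in that row with the largest first coordinate, if one exists. Since there are exactly $n^{d-1}$ such $1$-rows, $|S|\le n^{d-1}$. Let $M':=M\setminus S$.

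The heart of the argument is verifying that $M'$ avoids $P$. I would argue by contradiction: suppose $M'$ contains $P$ via a submatrix using first coordinates $r_1<r_2<\cdots<r_{k_1}$. The entry of $P$ at position $(k_1,y_2,\ldots,y_d)$ — the one adjacent to the new $1$-entry of $P'$ — corresponds to a $1$-entry of $M'$ at some position $(r_{k_1},z_2,\ldots,z_d)$. Because this entry survived the deletion, it was not the topmost $1$ of its $1$-row in $M$, so there exists $z_1'>r_{k_1}$ with $M(z_1',z_2,\ldots,z_d)=1$. Adjoining $z_1'$ as the $(k_1+1)$-st first-coordinate to the existing submatrix then yields a copy of $P'$ inside $M$: the $k_1$ chosen layers recover the copy of $P$, and the new layer at first coordinate $z_1'$ contains the $1$-entry required to realize the unique $1$ of the added layer of $P'$. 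This contradicts $P'$-freeness of $M$.

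The only real subtlety, and the step I expect to require the most care, is the ``lifting'' step verifying that the $1$ at $(z_1',z_2,\ldots,z_d)$ sits in precisely the correct position relative to the embedding of $P$. This needs the observation that the $(d-1)$ non-first coordinates $(z_2,\ldots,z_d)$ already match those of the image of the adjacent $1$-entry of $P$, so they automatically align with the image of the new $1$-entry of $P'$ (whose non-first coordinates are the same as those of its adjacent $1$ in $P$). Once this is noted, the contradiction is immediate and the lemma follows.
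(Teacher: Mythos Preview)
Your argument is correct and is exactly the standard F\"uredi--Hajnal deletion argument lifted to $d$ dimensions. Note that the paper does not actually supply its own proof of this lemma: it quotes the statement from \cite{GHLNPW2019} without proof, so there is nothing in the paper to compare against, but your write-up matches what that reference does.
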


\begin{lem}\label{lem:insertbetween} \cite{GHLNPW2019}
Suppose that $P$ is a $d$-dimensional 0-1 matrix with two consecutive $1$-entries in the same $1$-row. If $P’$ is obtained from $P$ by adding $t$ extra $1$-layers of entries to $P$ between two adjacent $1$-layers of entries of $P$, such that each new $1$-layer has a single $1$-entry in the same $1$-row as the other new $1$-layers and the newly introduced $1$-entries have an $1$-entry from $P$ adjacent to them at both ends, then $\ex(n, P, d)) \le \ex(n, P’, d) \le (t + 1) \ex(n, P, d)$.
\end{lem}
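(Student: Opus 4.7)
The lower bound $\ex(n,P,d)\le\ex(n,P',d)$ is immediate from Proposition~\ref{prop:mono}, since $P'$ contains $P$: deleting the $t$ inserted $1$-layers from $P'$ recovers $P$ exactly.

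For the upper bound, my plan is a cyclic coloring / pigeonhole argument on each $1$-row of $M$. Let $L$ be the $1$-row of $P$ containing the two consecutive $1$-entries, say at dimension-$1$ coordinates $j$ and $j+1$; by hypothesis all $t$ inserted $1$-entries of $P'$ sit on $L$ strictly between these bracketing entries. Given a $P'$-free $M$ of side length $n$, I would process each $1$-row $R$ of $M$ independently: list its $1$-entries by dimension-$1$ coordinate $i_1<i_2<\dots<i_m$ and assign the $s$-th of them the color $s\bmod (t+1)$. Some color $C$ carries at least $w(M)/(t+1)$ of the $1$-entries of $M$ by pigeonhole; let $M^*$ denote $M$ with every $1$-entry outside $C$ zeroed out.

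The crux is to show $M^*$ is $P$-free, which immediately yields $w(M)\le (t+1)\,w(M^*)\le (t+1)\ex(n,P,d)$. Suppose for contradiction an embedding $\phi$ of $P$ into $M^*$ exists, sending the $1$-layers of $P$ at dimension-$1$ coordinates $j$ and $j+1$ to coordinates $r_j<r_{j+1}$ of $M$ and the $1$-row $L$ of $P$ to some $1$-row $L^*$ of $M$. Then the $1$-entries placed at dimension-$1$ coordinates $r_j$ and $r_{j+1}$ of $L^*$ both belong to color $C$; because consecutive $1$-entries on $L^*$ receive consecutive colors cyclically, matching colors forces at least $t$ further $1$-entries of $M$ to lie on $L^*$ strictly between $r_j$ and $r_{j+1}$. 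Pick any such $\tilde r_1<\dots<\tilde r_t$ and extend $\phi$ by mapping the $s$-th inserted $1$-layer of $P'$ to dimension-$1$ coordinate $\tilde r_s$ while keeping its dimensions-$2,\dots,d$ coordinates on $L^*$; this produces an embedding of $P'$ into $M$, contradicting $P'$-freeness.

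The main point to verify is that the extended map is genuinely an embedding of $P'$: the new $1$-entries share their dimensions-$2,\dots,d$ coordinates with the bracketing $P$-entries, hence are forced onto $L^*$ (where the $\tilde r_s$ were found), and the dimension-$1$ sequence $r_1,\dots,r_j,\tilde r_1,\dots,\tilde r_t,r_{j+1},\dots$ is strictly increasing because $r_j<\tilde r_1<\dots<\tilde r_t<r_{j+1}$. This is also where the hypothesis of two consecutive $1$-entries on a common $1$-row is essential: the cyclic-coloring pigeonhole only transports information along a single $1$-row, and without two bracketing $P$-entries on that row there would be no way to match the $\tilde r_s$ to any prescribed coordinate.
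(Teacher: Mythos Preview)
The paper does not supply a proof of this lemma; it is quoted from \cite{GHLNPW2019} without argument, so there is no in-paper proof to compare against. Your argument is correct and is essentially the standard one (the $d$-dimensional analogue of the two-dimensional operation from \cite{FH1992,Tardos2005}): the cyclic rank-coloring modulo $t+1$ along each $1$-row guarantees that any two same-colored $1$-entries sharing a $1$-row of $M$ have at least $t$ intermediate $1$-entries of $M$ between them, which is precisely what is needed to extend an embedding of $P$ into $M^*$ to an embedding of $P'$ into $M$. The verification in your final paragraph that the extended map is order-preserving in dimension $1$ and lands on $1$-entries of $M$ is the only nontrivial check, and it is carried out correctly.
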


We derive a similar but useful lemma as a corollary of Lemma~\ref{lem:add-one}. In particular, below we define an operation of \textit{lowering} a \textit{bottom} entry along the first dimension. One can define a similar operation of \textit{lifting} a \textit{top} entry by moving it to the opposite direction. Moreover, lowering and lifting along any other dimension can also be defined.

\begin{lem}\label{lem:lower}
Let $P$ be a $d$-dimensional $p_1\times p_2\times\ldots p_d$ 0-1 matrix with some entry $P(p_1,x_2,\ldots,x_d)=1$. Obtain $P'$ from $P$ by lowering a bottom $1$-entry of $P$ as follows.  First, set $P'=P$. Second, attach an empty $1$-layer to the end of $P$. Third, set $P'(p_1,x_2,\ldots,x_d)=0$, and $P'(p_1+1,x_2,\ldots,x_d)=1$. Then we have $\ex(n,P',d) \le \ex(n,P,d)+n^{d-1}$. Moreover if $P$ has at least two $1$-entries then $\ex(n,P',d)=O(\ex(n,P,d))$.
\end{lem}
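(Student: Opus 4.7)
The plan is to prove $\ex(n,P',d)\le\ex(n,P,d)+n^{d-1}$ by a short deletion argument, after which the ``moreover'' clause follows directly from Proposition~\ref{prop:2ones}. Given a $P'$-free $d$-dimensional $n\times\dots\times n$ matrix $M$, I would form $M^\ast$ by deleting, from each of the $n^{d-1}$ many $1$-rows of $M$ along dimension $1$, the $1$-entry of largest first coordinate (if any). This removes at most $n^{d-1}$ entries, so it suffices to show that $M^\ast$ avoids $P$, which will give $w(M)\le w(M^\ast)+n^{d-1}\le\ex(n,P,d)+n^{d-1}$.

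To show $M^\ast$ avoids $P$, I would argue by contradiction: suppose a copy of $P$ sits inside $M^\ast$ via strictly increasing maps $\phi_1,\dots,\phi_d$, so every $1$-entry $P(a_1,\dots,a_d)$ corresponds to $M^\ast(\phi_1(a_1),\dots,\phi_d(a_d))=1$. Focus on the bottom entry at $(p_1,x_2,\dots,x_d)$: its image $(\phi_1(p_1),\phi_2(x_2),\dots,\phi_d(x_d))$ is a $1$-entry of $M^\ast$, and therefore cannot be the topmost $1$-entry of its $1$-row in $M$, so some $T>\phi_1(p_1)$ satisfies $M(T,\phi_2(x_2),\dots,\phi_d(x_d))=1$. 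Extending $\phi_1$ by $\phi_1'(p_1+1)=T$ should exhibit a copy of $P'$ in $M$: every $1$-entry of $P'$ other than $(p_1+1,x_2,\dots,x_d)$ coincides with a $1$-entry of $P$ and still maps to a $1$-entry of $M^\ast\subseteq M$, while the new $1$-entry $(p_1+1,x_2,\dots,x_d)$ maps to the $1$-entry $(T,\phi_2(x_2),\dots,\phi_d(x_d))$ of $M$ by the choice of $T$. This contradicts $P'$-freeness of $M$. The ``moreover'' clause is then immediate: if $P$ has at least two $1$-entries then $\ex(n,P,d)\ge n^{d-1}$ by Proposition~\ref{prop:2ones}, so the additive $n^{d-1}$ is absorbed and $\ex(n,P',d)\le 2\ex(n,P,d)=O(\ex(n,P,d))$.

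The main conceptual hurdle is choosing which entries of $M$ to delete. The naive candidate of removing the entire last $1$-layer of $M$ along dimension $1$ does not work, because a copy of $P$ surviving in the remainder cannot in general be extended to a copy of $P'$ in $M$ --- the required $1$-entry in that last layer need not land in the right position. Deleting the topmost $1$-entry of each $1$-row along dimension $1$ is exactly the right trade-off: it guarantees a $1$-entry of $M$ strictly above the image of the bottom entry of $P$ in the same $1$-row, which is precisely what the lowered entry of $P'$ demands. The hypothesis that $(p_1,x_2,\dots,x_d)$ is a bottom entry of $P$ is essential because it ensures that appending $\phi_1'(p_1+1)=T$ keeps $\phi_1'$ strictly increasing.
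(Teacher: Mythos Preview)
Your deletion argument is correct and complete: removing the top $1$-entry from each $1$-row costs at most $n^{d-1}$ ones, and any surviving copy of $P$ in $M^\ast$ can indeed be extended to a copy of $P'$ in $M$ exactly as you describe, since the image of the bottom entry $(p_1,x_2,\dots,x_d)$ still has a $1$-entry of $M$ strictly above it in the same $1$-row. The ``moreover'' clause then follows from Proposition~\ref{prop:2ones} as you say.

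The paper takes a different, shorter route. Instead of arguing directly on a $P'$-free matrix, it introduces the intermediate pattern $P''$ obtained from $P'$ by putting the $1$-entry at $(p_1,x_2,\dots,x_d)$ back in. Since $P''$ contains $P'$, monotonicity gives $\ex(n,P',d)\le\ex(n,P'',d)$; and since $P''$ is exactly $P$ with a new $1$-layer carrying a single $1$-entry adjacent to $P(p_1,x_2,\dots,x_d)$, Lemma~\ref{lem:add-one} gives $\ex(n,P'',d)\le\ex(n,P,d)+n^{d-1}$. Your argument is in effect the unpacked proof of Lemma~\ref{lem:add-one} specialized to this situation, so the two approaches are equivalent in substance; the paper's version is just a two-line reduction to an already-stated lemma, while yours is self-contained.
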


\begin{proof}
Obtain $P''$ from $P'$ by setting $P''(p_1,x_2,\ldots,x_d)=1$. Then $\ex(n,P',d)\leq \ex(n,P'',d)\le \ex(n,P,d)+n^{d-1}$, with the last inequality following from Lemma~\ref{lem:add-one}.

If $P$ has at least two $1$-entries, by Proposition~\ref{prop:2ones} $\ex(n,P,d)\ge n^{d-1}$. Therefore $\ex(n,P',d)=O(\ex(n,P,d))$.
\end{proof}

Successive applications of Lemma~\ref{lem:add-one} and Lemma~\ref{lem:lower} imply the extremal function of the following $3$-dimensional 0-1 matrix is $\Theta(n^2)$.
\begin{cor}
$\ex\left(n, \left(
			\begin{pmatrix}
			0 & 1 & 0\\
			0 & 0 & 0\\
			0 & 1 & 0
			\end{pmatrix};
			\begin{pmatrix}
			0 & 0 & 0\\
			1 & 0 & 1\\
			0 & 0 & 0
			\end{pmatrix}
			\right),3\right)=\Theta(n^2)$
\end{cor}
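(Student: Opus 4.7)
The lower bound $\ex(n,P,3)=\Omega(n^2)$ is immediate from Proposition~\ref{prop:2ones}. For the matching upper bound, the plan is to build $P$ from a trivial two-entry seed by a constant-length sequence of operations, each an application of Lemma~\ref{lem:add-one} or Lemma~\ref{lem:lower} along a suitable dimension (which is legitimate, since exchanging dimensions preserves the extremal function). Because each application adds at most $n^{d-1}=n^2$ and the seed has extremal function at most $n^2$, this approach yields a bound of a constant times $n^2$.

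Concretely, I would start with $M_0$, the $1\times 1\times 2$ matrix with $1$-entries $(1,1,1)$ and $(1,1,2)$, for which $\ex(n,M_0,3)\le n^2$ by direct inspection---each fiber along dimension~$3$ can contain at most one $1$-entry. Then I perform four steps: (i) apply Lemma~\ref{lem:add-one} along dimension~$1$, prepending a new layer whose unique $1$-entry $(1,1,2)$ is adjacent to the shifted entry $(2,1,2)$, yielding the $2\times 1\times 2$ matrix $M_1$ with $1$-entries $(1,1,2),(2,1,1),(2,1,2)$; (ii) apply Lemma~\ref{lem:add-one} along dimension~$2$, prepending a layer whose unique $1$-entry $(1,1,2)$ is adjacent to the shifted entry $(1,2,2)$, yielding the $2\times 2\times 2$ matrix $M_2$ with $1$-entries $(1,1,2),(1,2,2),(2,2,1),(2,2,2)$; (iii) apply Lemma~\ref{lem:lower} along dimension~$3$ to the bottom entry $(2,2,2)$, producing $M_3$ of dimensions $2\times 2\times 3$ with $1$-entries $(1,1,2),(1,2,2),(2,2,1),(2,2,3)$; and (iv) apply Lemma~\ref{lem:lower} along dimension~$2$ to the bottom entry $(1,2,2)$, producing $M_4=P$ of dimensions $2\times 3\times 3$ with $1$-entries $(1,1,2),(1,3,2),(2,2,1),(2,2,3)$.

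The adjacency hypothesis of Lemma~\ref{lem:add-one} and the extremal-layer hypothesis of Lemma~\ref{lem:lower} are immediate from the coordinates listed at each step. Telescoping the four resulting inequalities gives $\ex(n,P,3)\le 5n^2=O(n^2)$, and combined with the lower bound we obtain $\ex(n,P,3)=\Theta(n^2)$. The only nontrivial part of the plan is the choice of the sequence, which I arrived at by reverse engineering from $P$: at each stage I either peeled off an extremal layer containing exactly one $1$-entry (inverting Lemma~\ref{lem:add-one}) or un-lowered a singleton bottom entry (inverting Lemma~\ref{lem:lower}), eventually reducing $P$ to the seed $M_0$.
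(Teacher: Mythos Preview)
Your proof is correct and follows essentially the same strategy as the paper: establish the lower bound via Proposition~\ref{prop:2ones}, then build $P$ from a simple seed by a short chain of applications of Lemma~\ref{lem:add-one} and Lemma~\ref{lem:lower}. The paper starts from the $1\times 3\times 1$ all-ones matrix and uses one \emph{lower}, one \emph{add-one}, and one more \emph{lower}, while you start from a two-entry $1\times 1\times 2$ seed and use two \emph{add-one}s followed by two \emph{lower}s; both routes yield the same $5n^2$ upper bound.
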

\begin{proof}
   By Proposition~\ref{prop:2ones}
   the extremal function of the matrix above is at least $n^{d-1}$, so
   it suffices to show that the extremal function of interest is $O(n^2)$.

    We start from a $3$-dimensional matrix
    $$
    P_1=\begin{pmatrix}
    \begin{pmatrix}
    1\\
    1\\
    1
    \end{pmatrix}
    \end{pmatrix}
    $$
    We have $\ex(n,P_1,3)\le 2n^2$ because no $2$-row of any $n\times n\times n$ $P_1$-free matrix could have $3$ or more $1$-entries. 

    Then, we apply Lemma~\ref{lem:lower} to $P_1$ along the first dimension to lower the middle $1$-entry $P_1(1,2,1)$. Hence $\ex(n,P_2,3)=O(n^2)$ where
    $$
    P_2=\begin{pmatrix}
    \begin{pmatrix}
    1\\
    0\\
    1
    \end{pmatrix};
    \begin{pmatrix}
    0\\
    1\\
    0
    \end{pmatrix}
    \end{pmatrix}
    $$

    We further apply Lemma~\ref{lem:add-one} to $P_2$ along the third dimension to add a $1$-entry. Thus $\ex(n,P_3,3)=O(n^2)$ where
    $$
    P_3=\begin{pmatrix}
    \begin{pmatrix}
    0 & 1\\
    0 & 0\\
    0 & 1
    \end{pmatrix};
    \begin{pmatrix}
    0 & 0\\
    1 & 1\\
    0 & 0
    \end{pmatrix}
    \end{pmatrix}
    $$

    Finally we apply Lemma~\ref{lem:lower} to $P_3$ along the third dimension to lower the $1$-entry $P_3(2,2,2)$. This results in the 0-1 matrix in the statement, and therefore the extremal function of interest is $O(n^2)$.
\end{proof}

The next two lemmas generalize a result from \cite{Tardos2005}. A similar generalization was stated in \cite{GHLNPW2019} without proof. 

\begin{lem}\label{lem:attach1}
Suppose that $P$ is a $d$-dimensional 0-1 matrix, and let $P'$ be obtained by attaching an empty $1$-layer to $P$. Then $\ex(n,P',d)=O(\ex(n,P,d)+n^{d-1})$.
\end{lem}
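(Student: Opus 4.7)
The plan is to reduce the weight bound on a $P'$-free matrix to a weight bound on a $P$-free matrix whose first dimension is one shorter. Without loss of generality I will assume $P'$ is obtained by appending the empty $1$-layer to $P$ at the end of the first dimension, so that if $P$ has dimensions $p_1\times p_2\times\cdots\times p_d$ then $P'$ has dimensions $(p_1+1)\times p_2\times\cdots\times p_d$ with its last $1$-layer identically zero. Let $A$ be an $n\times n\times\cdots\times n$ $P'$-free 0-1 matrix, and let $A'$ denote the sub-matrix of $A$ consisting of the first $n-1$ $1$-layers along the first dimension.

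The first step is to observe that $A'$ must be $P$-free. Suppose, for contradiction, that $A'$ contains $P$ via a choice of first-coordinate indices $i_1<\cdots<i_{p_1}\le n-1$ together with appropriate indices in the remaining $d-1$ dimensions. Then the choice $i_{p_1+1}=n$ extends this to a copy of $P'$ inside $A$, because the appended $1$-layer of $P'$ is all zero and therefore imposes no $1$-entry constraint on the $1$-layer of $A$ at first coordinate $n$. This would contradict $A$ being $P'$-free.

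Since $A'$ is a $P$-free 0-1 matrix of dimensions $(n-1)\times n\times\cdots\times n$, appending an all-zero $1$-layer produces a $P$-free $n\times n\times\cdots\times n$ 0-1 matrix of the same weight, so by the definition of the extremal function $w(A')\le \ex(n,P,d)$. The remaining $1$-layer of $A$, namely the one with first coordinate $n$, contributes at most $n^{d-1}$ further $1$-entries, so $w(A)\le \ex(n,P,d)+n^{d-1}$, which yields $\ex(n,P',d)=O(\ex(n,P,d)+n^{d-1})$ as claimed. This is essentially a single splitting argument, so there is no substantive obstacle: the only point requiring care is the containment equivalence between $A$ being $P'$-free and $A'$ being $P$-free, which follows from the triviality of matching the empty appended $1$-layer to any later $1$-layer of $A$.
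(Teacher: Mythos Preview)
Your proof is correct and follows essentially the same splitting idea as the paper: peel off the tail of the host matrix, argue that what remains is $P$-free, and bound the discarded part by $n^{d-1}$. The only cosmetic difference is that the paper zeroes out the last $k+1$ $1$-layers of the host (where $k$ is the number of trailing empty $1$-layers already present in $P$), whereas you remove just the final $1$-layer; your version is slightly cleaner and yields the sharper explicit bound $\ex(n,P',d)\le \ex(n,P,d)+n^{d-1}$, but the argument is the same in spirit.
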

\begin{proof}
Suppose that the last $k$ $1$-layers of $P$ are empty, and the $(k+1)^\text{th}$ to last $1$-layer of $P$ is not empty. For any $n\times n\times\ldots\times n$ 0-1 matrix $M'$ that avoids $P'$, let $M$ be obtained by deleting all $1$-entries in the last $k+1$ $1$-layers from $M'$. Clearly $M$ avoids $P$, and $w(M')\leq w(M)+(k+1)n^{d-1}$.
\end{proof}

\begin{lem}\label{lem:insert1}
Suppose that $P$ is a $d$-dimensional 0-1 matrix, and let $P'$ be obtained by inserting an empty $1$-layer in $P$. Then $\ex(n,P',d)=O\left(\ex(n,P,d)\right)$.
\end{lem}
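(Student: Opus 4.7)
\medskip

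\noindent\textbf{Proof plan for Lemma~\ref{lem:insert1}.} The plan is to generalize the classical ``odd/even layers'' trick used by Tardos in two dimensions. First, I would reduce to the essential case by noting that if the inserted empty $1$-layer is attached at an end of $P$ rather than strictly between two existing $1$-layers, then the statement already follows from Lemma~\ref{lem:attach1}. So I may assume that $P$ has dimensions $p_1\times p_2\times\cdots\times p_d$ with $p_1\ge 2$ and that the empty $1$-layer of $P'$ sits at position $k+1$ for some $1\le k\le p_1-1$; that is, the first $k$ $1$-layers of $P'$ are the first $k$ $1$-layers of $P$, the $(k+1)^{\text{th}}$ $1$-layer of $P'$ is empty, and the remaining $p_1-k$ $1$-layers of $P'$ are the last $p_1-k$ $1$-layers of $P$.

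Now let $M'$ be an arbitrary $P'$-free $n\times n\times\cdots\times n$ 0-1 matrix. Split the $1$-entries of $M'$ according to the parity of their first coordinate: let $M'_{\mathrm{odd}}$ and $M'_{\mathrm{even}}$ be the submatrices consisting of those $1$-entries whose first coordinate is odd, respectively even. The key claim is that each of $M'_{\mathrm{odd}}$ and $M'_{\mathrm{even}}$ is $P$-free. Suppose for contradiction that $M'_{\mathrm{odd}}$ contains a copy of $P$ using $1$-layers of $M'$ at positions $i_1<i_2<\cdots<i_{p_1}$, all odd. Then consecutive selected positions differ by at least $2$; in particular $i_{k+1}-i_k\ge 2$, so there is an index $i'$ with $i_k<i'<i_{k+1}$. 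Using the $1$-layers of $M'$ at positions $i_1,\ldots,i_k,i',i_{k+1},\ldots,i_{p_1}$ along dimension $1$, together with the same choices of coordinates in the remaining $d-1$ dimensions used in the witnessing copy of $P$, I get a submatrix of $M'$ that can be transformed into $P'$ by zeroing out every $1$-entry lying in the $i'$-layer (which corresponds to the empty $1$-layer of $P'$). This contradicts $P'$-freeness of $M'$; the same argument applies to $M'_{\mathrm{even}}$.

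To conclude, since each of $M'_{\mathrm{odd}}$ and $M'_{\mathrm{even}}$ is a $P$-free $d$-dimensional 0-1 matrix whose dimensions are bounded above by $n\times n\times\cdots\times n$, each has at most $\ex(n,P,d)$ $1$-entries. Because every $1$-entry of $M'$ appears in exactly one of the two parts, $w(M')=w(M'_{\mathrm{odd}})+w(M'_{\mathrm{even}})\le 2\ex(n,P,d)$, which gives $\ex(n,P',d)=O(\ex(n,P,d))$. The only real subtlety is the reduction step at the start, verifying that ``insert strictly in between'' is what allows us to avoid the additive $n^{d-1}$ term present in Lemma~\ref{lem:attach1}: the parity separation only helps precisely because having layers on \emph{both} sides of the inserted empty layer in $P'$ is what guarantees a usable interior gap whenever $P$ is witnessed in the odd (or even) sublattice.
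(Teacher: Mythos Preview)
Your argument is correct and follows the same residue-class strategy as the paper, but with a cleaner implementation. The paper keeps $M_p$ at full size $n\times\cdots\times n$ by \emph{zeroing out} the $1$-layers outside a fixed residue class mod $k$; under that convention only the positions corresponding to \emph{nonempty} layers of $P$ are forced into the residue class, so the modulus must be taken large enough (specifically $k-2$ at least the longest run of consecutive empty $1$-layers in $P$) to guarantee room for the extra empty layer. You instead take genuine \emph{submatrices} on the odd (resp.\ even) $1$-layers, which forces \emph{every} selected position in a witnessing copy of $P$ to be odd, so consecutive positions automatically differ by at least $2$ and modulus $2$ suffices for any $P$. This buys you the uniform constant $2$ in place of the $P$-dependent constant in the paper.

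One small remark: your opening reduction to Lemma~\ref{lem:attach1} is unnecessary and slightly imprecise. The lemma's ``inserting'' already means strictly between two existing $1$-layers (as opposed to ``attaching'' in Lemma~\ref{lem:attach1}), and Lemma~\ref{lem:attach1} only yields $O(\ex(n,P,d)+n^{d-1})$ rather than $O(\ex(n,P,d))$. Since your main odd/even argument already covers every insertion position $1\le k\le p_1-1$, you can simply drop that paragraph.
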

\begin{proof}
Suppose that $P$ has no more than $k-2$ consecutive empty $1$-layers. For any $n\times n\times\ldots\times n$ 0-1 matrix $M$ that avoids $P'$, for each $p\in\mathbb{Z}_k$ we construct an $n\times n\times\ldots\times n$ 0-1 matrix $M_p$ by retaining every $1$-entry $M(x_1,x_2,\ldots,x_d)$ where $x_1 \equiv p\mod k$, and setting all other entries to zero. Clearly $M_p$ avoids $P$, or else $M$ would contain $P'$, so $w(M)=\sum_{p\in\mathbb{Z}_k}w(M_p)\leq k\times \ex(n,P,d)$.
\end{proof}

In the next two lemmas, we generalize two operations from \cite{PT2006} that cause the extremal function to grow by at most a factor of $O(\log n)$.
\begin{lem}
Let $P$ be a $d$-dimensional 0-1 matrix for which there exist two adjacent $1$-layers $l_1,l_2$ in $P$ where $l_1$ is before $l_2$, $l_1$ has an $1$-entry $o_1$ that is in the same $1$-row as a $1$-entry $o_2$ in $l_2$, and $l_2$ has another $1$-entry $o_3$. Construct a $d$-dimensional 0-1 matrix $P'$ from $P$ by inserting a $1$-layer between $l_1$ and $l_2$ with a single $1$-entry $o$ that is in the same $1$-row as $o_3$. Then we have $\ex(n,P',d)=O(\ex(n,P,d)\log n)$.
\end{lem}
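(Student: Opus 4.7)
The plan is to adapt the dyadic compression argument that Pach and Tardos~\cite{PT2006} used to prove the two-dimensional analogue of this logarithmic-factor operation. Writing $e(n) = \ex(n, P, d)$, the goal is to show $w(M) = O(e(n) \log n)$ for every $P'$-free $d$-dimensional 0-1 matrix $M$ of side $n$.

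First, I would set up a telescoping decomposition along the first dimension. For each level $k = 0, 1, \ldots, K$ with $K = \lceil \log_2 n \rceil$, partition the first dimension of $M$ into dyadic blocks of length $2^k$, writing $B^k_j$ for the $j$-th such block. For a block $B$ and a column $c = (x_2, \ldots, x_d)$, let $u^{(k)}(B, c)$ indicate whether $M$ has a $1$-entry in column $c$ at some row of $B$, and let $b^{(k)}(B, c)$ indicate whether both halves of $B$ do. The pointwise identity $u^{(k-1)}(B_L, c) + u^{(k-1)}(B_R, c) = u^{(k)}(B, c) + b^{(k)}(B, c)$, where $B = B_L \cup B_R$, summed over all blocks and columns and iterated from $k = 0$ to $k = K$, telescopes into
$$w(M) \le n^{d-1} + \sum_{k=1}^{K} W_k, \qquad W_k = \sum_{B, c} b^{(k)}(B, c),$$
since at level $K$ there is a single block, contributing at most $n^{d-1}$ across all columns.

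Next I would bound each $W_k$ by $O(e(n))$ by forming the compressed matrix $M^{(k)}$ of dimensions $\lceil n / 2^k \rceil \times n \times \cdots \times n$ with $M^{(k)}(j, c) = b^{(k)}(B^k_j, c)$, so $W_k = w(M^{(k)})$. The key claim is that $M^{(k)}$ avoids $P$: combined with Proposition~\ref{prop:mono} and the monotonicity of $\ex$ in each side length, this gives $W_k \le e(n)$ for every $k$, whence $w(M) \le n^{d-1} + K \cdot e(n) = O(e(n) \log n)$, using $e(n) \ge n^{d-1}$ from Proposition~\ref{prop:2ones}. To prove the claim, suppose for contradiction that $M^{(k)}$ contains $P$ via blocks $j_1 < \cdots < j_{p_1}$; lift this to an embedding in $M$ by choosing a representative $M$-row $Y_r$ in each $B^k_{j_r}$ realizing the column constraints of $P$-row $r$, taking $Y_{i+1}$ to lie in the right half of $B^k_{j_{i+1}}$. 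Since $b^{(k)}(B^k_{j_{i+1}}, c_{o_3}) = 1$, the left half of $B^k_{j_{i+1}}$ contains a $1$-entry at column $c_{o_3}$; this entry lies strictly between $Y_i$ and $Y_{i+1}$ along the first dimension (as the former lies in the strictly earlier block $B^k_{j_i}$ and the latter lies in the right half of $B^k_{j_{i+1}}$), so it serves as the inserted-layer $1$-entry of a copy of $P'$ in $M$, contradicting $P'$-freeness.

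The main technical obstacle I anticipate is ensuring consistency of the lift: within a single block $B^k_{j_r}$, the $1$-entries at the different columns where $P$-row $r$ has $1$s may not share a common $M$-row. I would address this by strengthening the definition of $b^{(k)}$ to record only those blocks that admit a common representative $M$-row in the relevant sub-half simultaneously realizing all of the corresponding $P$-row's column constraints; this changes $W_k$ by at most a multiplicative constant depending on the dimensions of $P'$, so the overall conclusion still goes through. This is the standard bookkeeping already present in the original Pach--Tardos argument.
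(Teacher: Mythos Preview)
Your dyadic block-compression approach differs from the paper's argument, and it carries a real gap at the lifting step that your proposed fix does not close.

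The paper never compresses $M$ into block-indexed matrices. For each scale $l$ it keeps the submatrix $M_l\subseteq M$ of those $1$-entries $e$ whose $1$-row contains a preceding $1$-entry at first-coordinate distance in $[2^l,2^{l+1})$, then thins each $1$-row of $M_l$ by half to obtain $N_l$. The point is that $N_l$ lives on the \emph{same row set} as $M$: a copy of $P$ inside $N_l$ already assigns one genuine $M$-row to each $1$-layer of $P$, and the distance condition then supplies, in the $1$-row of the image of $o_3$, the extra $1$-entry needed to extend this to a copy of $P'$ in $M$.

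In your construction, $M^{(k)}(j,c)=b^{(k)}(B^k_j,c)$ is defined columnwise. A copy of $P$ in $M^{(k)}$ therefore only certifies that for each $1$-entry of $P$-layer $r$ there is \emph{some} $M$-row in $B^k_{j_r}$ with a $1$ in the right column; these rows need not coincide, so in general no single $Y_r$ realizes all the column constraints of $P$-layer $r$. Your fix---``strengthen $b^{(k)}$ to record only those blocks that admit a common representative $M$-row \ldots realizing all of the corresponding $P$-row's column constraints''---is not well-defined: $b^{(k)}(B,c)$ is a per-column indicator and cannot encode a multi-column constraint, and which columns are ``relevant'' depends on the yet-unknown embedding of $P$ in $M^{(k)}$. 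If instead you fix one representative row per block in advance and set $M^{(k)}(j,c)=M(\mathrm{rep}(j),c)$, then $M^{(k)}$ is a genuine submatrix of $M$ and the lift works, but the identity $W_k=w(M^{(k)})$ is lost; a block whose two halves each consist of $m$ rows with a single $1$ in $m$ distinct columns contributes $m$ to $W_k$ but only $1$ to $w(M^{(k)})$, so the ratio can be $\Theta(n)$ and the telescoping no longer bounds what you need. This is also not ``standard bookkeeping already present in the original Pach--Tardos argument'': their proof of this operation is the distance-based argument the paper reproduces, not block compression, precisely because compression discards the row-level coherence that ordinary (non-interval-minor) pattern containment requires.
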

\begin{proof}
Suppose that $M$ is a $d$-dimensional 0-1 matrix of dimensions $n\times\dots\times n$ that avoids $P'$. For each integer $l\in\left[0,\lfloor\log_2n\rfloor\right]$, define a $d$-dimensional 0-1 matrix $M_{l}$ of the same dimensions as $M$ by keeping only every $1$-entry $e$ in $M$ such that there exists another $1$-entry $e_1$ in the same $1$-row as $e$ with a smaller first coordinate, and their first coordinate difference is in $[2^l,2^{l+1})$. Then, obtain from $M_{l}$ a $d$-dimensional 0-1 matrix $N_{l}$ of the same dimensions by deleting the second, fourth, sixth, etc. $1$-entries in every $1$-row of $M_{l}$. Any two consecutive $1$-entries in every $1$-row of $N_{l}$ are at least $2^{l+1}$ positions apart.

Matrix $N_{l}$ avoids $P$. Suppose it does not, then it has a copy of $P$ with $1$-entries $p_1,p_2,p_3$ matching $o_1,o_2,o_3$, respectively. The first coordinates of $p_1$ and $p_2$ are at least $2^{l+1}$ positions apart. In $M$ there exists a $1$-entry $p$ in the same $1$-row as $p_3$ with a smaller first coordinate such that the difference of their first coordinates is in $[2^l,2^{l+1})$. Thus $p$ and $P$ forms a copy of $P'$.

Finally, we have
\begin{align*}
w(M)&\leq n^{d-1}+\sum_{l=0}^{\lfloor\log_2n\rfloor}w(M_{l})\leq n^{d-1}+2\sum_{l=0}^{\lfloor\log_2n\rfloor}w(N_{l})\\
		&\leq n^{d-1}+2(\lfloor\log_2n\rfloor+1)\ex(n,P,d)\\
&=O(\ex(n,P,d)\log n).
\end{align*}
\end{proof}

The next lemma which generalizes a result from \cite{PT2006} can be proved very similarly by forming a series of matrices $M_{l,k}$ from a $P'$-free matrix $M$ according to the distances from each $1$-entry to the closest $1$-entries in the same $1$-row with smaller and larger first coordinates. Hence we skip the detailed proof.

\begin{lem}
Let $P$ be a $d$-dimensional 0-1 matrix for which there exist two adjacent $1$-layers $l_1,l_2$ in $P$ where $l_1$ has $1$-entries $o_1,o_2$ and possibly more $1$-entries, $l_2$ has $1$-entries $o_3,o_4$ and possibly more $1$-entries, and $o_1,o_3$ are in the same $1$-row. Construct a $d$-dimensional 0-1 matrix $P'$ from $P$ by inserting two $1$-layers between $l_1$ and $l_2$ each with a single $1$-entry, the new $1$-entry with the first coordinate closer to that of $o_3$ is in the same $1$-row as $o_3$, and the other new $1$-entry is in the same $1$-row as $o_4$. Then we have $\ex(n,P',d)=O(\ex(n,P,d)\log^2 n)$.
\end{lem}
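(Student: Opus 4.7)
The plan is to derive the bound by two successive applications of the preceding lemma, which handles the insertion of a single $1$-layer. Introduce an intermediate $d$-dimensional 0-1 matrix $P^{(1)}$ obtained from $P$ by inserting between $l_1$ and $l_2$ exactly one new $1$-layer whose unique $1$-entry lies in the same $1$-row as $o_4$; this entry will play the role of the inserted entry of $P'$ whose first coordinate is farther from that of $o_3$.

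First I would apply the preceding lemma to $P$, with $l_1$ and $l_2$ as the adjacent layers, $o_1\in l_1$ and $o_3\in l_2$ as the pair of $1$-entries lying in a common $1$-row, and $o_4\in l_2$ in the role of the ``another'' $1$-entry. Since the preceding lemma inserts a single $1$-entry in the $1$-row of that ``another'' entry, the resulting matrix is exactly $P^{(1)}$, and the preceding lemma gives $\ex(n,P^{(1)},d)=O(\ex(n,P,d)\log n)$. Next I would apply the preceding lemma to $P^{(1)}$, this time using the newly inserted $1$-layer and $l_2$ as the adjacent layers: the inserted layer's single $1$-entry and $o_4$ lie in the same $1$-row (both in the $1$-row of $o_4$), while $o_3\in l_2$ serves as the ``another'' entry. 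The preceding lemma then inserts a $1$-layer with one $1$-entry in the $1$-row of $o_3$ between these two adjacent layers, which is precisely the second inserted layer of $P'$. This yields $\ex(n,P',d)=O(\ex(n,P^{(1)},d)\log n)$, and chaining the two bounds produces the desired $\ex(n,P',d)=O(\ex(n,P,d)\log^2 n)$.

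The direct one-pass argument advertised by the authors would instead start from a $P'$-free matrix $M$ and partition its $1$-entries into submatrices $M_{l,k}$ indexed by the dyadic intervals containing $d_-(e)$ and $d_+(e)$, the first-coordinate distances from each $1$-entry $e$ to its closest $1$-row neighbors with smaller and larger first coordinates, respectively, and then thin each $M_{l,k}$ so consecutive retained $1$-entries in every $1$-row are well separated. The hard part in executing such a direct approach is enforcing the required relative order of the two newly inserted $1$-entries, namely that the new entry in the $1$-row of $o_4$ must strictly precede, in first coordinate, the new entry in the $1$-row of $o_3$; this ordering is not automatic when the relevant distances $d_-(p_3)$ and $d_-(p_4)$ happen to fall into the same dyadic interval. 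The iterative reduction sketched above sidesteps precisely this subtlety by treating the two insertions one at a time and invoking the preceding lemma as a black box.
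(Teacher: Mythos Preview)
Your two-step reduction is correct. The first application of the preceding lemma is legitimate because $o_1\in l_1$ and $o_3\in l_2$ lie in a common $1$-row while $o_4\in l_2$ furnishes the required ``another'' $1$-entry, yielding $P^{(1)}$ with $\ex(n,P^{(1)},d)=O(\ex(n,P,d)\log n)$. In $P^{(1)}$ the inserted layer $m$ and $l_2$ are adjacent, the unique $1$-entry of $m$ shares a $1$-row with $o_4\in l_2$, and $o_3\in l_2$ serves as the ``another'' entry, so the second application is equally valid and produces exactly $P'$. Chaining gives the claimed $O(\log^2 n)$ factor. (Note that the extra hypothesis $o_2\in l_1$ in the present lemma is never used in your argument; that is fine.)

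This is a genuinely different route from the paper's. The authors indicate a direct one-pass proof: starting from a $P'$-free matrix $M$, they would form matrices $M_{l,k}$ indexed by the dyadic scales of the distances from each $1$-entry to its nearest $1$-row neighbours on both sides, and argue that a suitably thinned version of each $M_{l,k}$ avoids $P$. Your iterative reduction treats the preceding lemma as a black box and avoids having to set up the double dyadic bookkeeping and the thinning step; it is shorter and conceptually cleaner, at the cost of not being self-contained. The direct argument, by contrast, makes the $\log^2 n$ loss transparent as coming from the $O(\log n)\times O(\log n)$ choices of $(l,k)$, and would generalize more readily if one wanted to insert more than two layers in one shot.
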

\begin{lem}\label{lem:threeone}
For any $d$-dimensional 0-1 matrix $P$ with no more than three $1$-entries, $\ex(n,P,d)=O(n^{d-1})$.
\end{lem}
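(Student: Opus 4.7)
The plan is to induct on the dimension $d$, with the base case $d=2$ being the classical F\"uredi--Hajnal fact \cite{FH1992} that every 2-dimensional 0-1 matrix with at most three 1-entries has a linear extremal function. For the inductive step let $d\ge 3$ and let $P$ be a $d$-dimensional 0-1 matrix with $w(P)\le 3$; write $V\subseteq [d]$ for the set of dimensions in which the 1-entries of $P$ do not all share the same coordinate, so $|V|\le w(P)\le 3$.

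If $|V|<d$, pick a dimension $i\notin V$, meaning the 1-entries of $P$ all have the same $i$-coordinate. After trimming empty $i$-layers (with only a constant-factor change in the extremal function by Lemmas~\ref{lem:attach1} and~\ref{lem:insert1}), we may assume $p_i=1$. Letting $\bar P$ denote the $(d-1)$-dimensional projection of $P$ along dimension $i$ (which also has at most three 1-entries), every $i$-layer of a $P$-free host matrix $A$ must avoid $\bar P$. This gives $w(A)\le n\cdot \ex(n,\bar P,d-1)=O(n^{d-1})$ by the induction hypothesis.

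The remaining case is $|V|=d$. Since $|V|\le 3$ this forces $d=3$, and the three 1-entries span all three dimensions. If each dimension sees three distinct coordinates among the ones, then $P$ is a $3\times 3\times 3$ partial permutation matrix with three 1-entries and $\ex(n,P,3)=O(n^2)$ by the Klazar--Marcus theorem \cite{KM2007}. Otherwise some dimension sees only two distinct coordinates, so two ones share a coordinate there, and after normalization $P$ fits inside a $2\times 3\times 3$ box. For each such residual configuration I would verify the $O(n^2)$ bound by one of three routes: recognizing $P$ as a diagonal extension (in the sense of Theorem~\ref{thm:stretch}) of a 2-dimensional 3-entry pattern and combining Theorem~\ref{thm:stretch} with the base case; recognizing $P$ as a 3-dimensional tuple permutation matrix and invoking the result of \cite{GT2017}; or performing a direct slicing argument along dimension 1 that combines 2-dimensional projection bounds on each layer with the inter-layer constraint imposed by $P$'s third 1-entry.

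The main obstacle is this last sub-case: the degenerate three-entry patterns in $d=3$ where two ones share a coordinate in some dimension form a finite but non-trivial family, and not every member is a named class such as a permutation or tuple permutation matrix. Those residual configurations would need an ad hoc slicing analysis in which one groups the 1-entries of the host by their dim-1 layer, bounds each layer using the 2-dimensional projection of $P$ restricted to the first layer, and then exploits the constraint that each valid pair in an earlier layer forbids a specific pattern entry in every later layer.
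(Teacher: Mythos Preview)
Your inductive setup and the handling of the case $|V|<d$ are fine, but the bound $|V|\le w(P)$ is simply false, and this breaks the reduction to $d=3$. For a concrete counterexample, take $d=4$ and two $1$-entries at $(1,1,1,1)$ and $(2,2,2,2)$: then $w(P)=2$ but $|V|=4$. More generally, with three $1$-entries one can have $|V|=d$ for arbitrarily large $d$ (e.g.\ the ones of $I_{3,d}$, or the three ones $(1,1,\dots,1)$, $(2,2,\dots,2)$, $(1,3,\dots,3)$, the latter not even a permutation matrix). So the case $|V|=d$ does not force $d\le 3$, and your ad hoc plan for the ``residual configurations'' in $d=3$ --- which you yourself flag as incomplete --- would anyway have to be carried out in every dimension.

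The paper's proof closes this gap with a single clean tool you did not use: Lemma~\ref{lem:lower}. After deleting empty layers (Lemmas~\ref{lem:attach1},~\ref{lem:insert1}) there are at most three $i$-layers in each dimension. If some dimension $i$ has exactly two occupied $i$-layers, then $P$ is obtained from its one-layer projection $P'$ (embedded back into $d$ dimensions) by lowering a single $1$-entry, so Lemma~\ref{lem:lower} gives $\ex(n,P,d)\le \ex(n,P',d)+n^{d-1}$, and the one-layer case is handled by your projection-and-induction argument. If instead every dimension has exactly three occupied layers, then the three $1$-entries have pairwise distinct coordinates in every dimension, so $P$ is a $d$-dimensional permutation matrix and $\ex(n,P,d)=O(n^{d-1})$ by Klazar--Marcus~\cite{KM2007} for all $d$, not just $d=3$. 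This replaces your entire final case analysis (tuple permutations, diagonal extensions, slicing) with two lines.
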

\begin{proof}
  We prove it by induction on $d$. The statement holds for $d=2$. For $d>2$, according to Lemma~\ref{lem:attach1} and Lemma~\ref{lem:insert1} we can assume that $P$ has no empty $1$-layers and thus has at most three $1$-layers. If $P$ has one $1$-layer, let $\bar{P}$ be the projection of $P$ along the first dimension. Every $1$-layer of a $n\times\dots\times n$ $d$-dimensional 0-1 matrix $M$ that avoids $P$ has at most $\ex(n,\bar{P},d-1)$ $1$-entries, since otherwise this $1$-layer would contain $\bar{P}$ and $M$ would contain $P$. So $\ex(n,P,d)\le n\ex(n,\bar{P},d-1)$. Matrix $\bar{P}$ has no more than three $1$-entries, so by induction hypothesis $\ex(n,\bar{P},d-1)=O(d^{n-2})$ and thus $\ex(n,P,d)=O(n^{d-1})$.
  If $P$ has two $1$-layers, denote $\bar{P}$ embedded in $d$-dimensional space by $P'$. By Lemma~\ref{lem:lower} we have $\ex(n,P,d)\leq \ex(n,P',d)+n^{d-1}=O(n^{d-1})$. By symmetry, the same argument applies if $P$ has at most two $i$-layers for any $i$. Otherwise, if $P$ has exactly three $i$-layers for every $i$, then $P$ is a $d$-dimensional permutation matrix, so $\ex(n,P,d)=O(n^{d-1})$ by \cite{KM2007}.
\end{proof}

With the tools above, we give several examples of minimally non-$O(n^2)$ $3$-dimensional 0-1 matrices below. Each of them has weight four, so by Theorem~\ref{lem:threeone} removing any $1$-entry from any of them results in a 0-1 matrix with extremal function $O(n^2)$.

\begin{cor}\label{cor:diag}
For any $3$-dimensional 0-1 matrix $P$ in
$$
\left\{
\left(
			\begin{pmatrix}
			1 & 0 \\
			1 & 0 \\
			\end{pmatrix};
			\begin{pmatrix}
			0 & 1 \\
			0 & 1 \\
			\end{pmatrix}
			\right),
   \left(
			\begin{pmatrix}
			1 & 0 \\
			1 & 0 \\
			\end{pmatrix};
			\begin{pmatrix}
			0 & 1 \\
			0 & 0 \\
			\end{pmatrix};
			\begin{pmatrix}
			0 & 0 \\
			0 & 1 \\
			\end{pmatrix}
			\right)
   \right\},
$$
we have $\ex\left(n,P,3\right)=\Theta(n^{2.5})$ and $P$ is minimally non-$O(n^2)$.
\end{cor}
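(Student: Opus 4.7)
The plan is to prove matching upper and lower bounds of order $n^{5/2}$ for both matrices (call them $P_1$ and $P_2$ in the order listed in the statement) and then deduce minimality from Lemma~\ref{lem:threeone}.

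For the lower bound, observe that the $2$-dimensional projection of either $P_1$ or $P_2$ onto dimensions $2$ and $3$ is the $2\times 2$ all-ones matrix $J=\begin{pmatrix}1&1\\1&1\end{pmatrix}$: the $1$-entries of $P_1$ are $(1,1,1),(1,2,1),(2,1,2),(2,2,2)$ and those of $P_2$ are $(1,1,1),(1,2,1),(2,1,2),(3,2,2)$, and in both cases their $(x_2,x_3)$-coordinates cover all four pairs in $\{1,2\}^2$. Avoiding $J$ is equivalent to $K_{2,2}$-freeness in bipartite graphs, so the classical K\H{o}v\'ari--S\'os--Tur\'an bound gives $\ex(n,J)=\Theta(n^{3/2})$, and Corollary~\ref{cor:2dprojection} yields $\ex(n,P,3)=\Omega(n\cdot n^{3/2})=\Omega(n^{5/2})$ for each $P\in\{P_1,P_2\}$.

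For the upper bound on $P_1$, I would use a double-counting argument. Let $M$ be a $P_1$-free 0-1 matrix of dimensions $n\times n\times n$. For each pair $r_1<r_2$ in $[n]$, set
$$S_{r_1,r_2}=\{(a,c)\in[n]\times[n]:M(a,r_1,c)=M(a,r_2,c)=1\}.$$
If $S_{r_1,r_2}$ contained points $(a_1,c_1),(a_2,c_2)$ with $a_1<a_2$ and $c_1<c_2$, the four corresponding $1$-entries of $M$ would form a copy of $P_1$. Hence $S_{r_1,r_2}$, viewed as a $2$-dimensional 0-1 matrix, avoids the identity $I_2$, so $|S_{r_1,r_2}|\le\ex(n,I_2)=2n-1$. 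Setting $g(a,c)=|\{r:M(a,r,c)=1\}|$, a double count gives
$$\sum_{a,c}\binom{g(a,c)}{2}=\sum_{r_1<r_2}|S_{r_1,r_2}|\le\binom{n}{2}(2n-1)=O(n^3),$$
and Cauchy--Schwarz applied to the $n^2$ values $g(a,c)$ then yields $w(M)^2\le 2n^2\sum_{a,c}\binom{g(a,c)}{2}+n^2w(M)=O(n^5)+n^2w(M)$, forcing $w(M)=O(n^{5/2})$.

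For the upper bound on $P_2$, note that $P_2$ arises from $P_1$ by lowering the bottom $1$-entry at $(2,2,2)$ along the first dimension; since $P_1$ has more than one $1$-entry, Lemma~\ref{lem:lower} gives $\ex(n,P_2,3)=O(\ex(n,P_1,3))=O(n^{5/2})$. For minimality, both $P_1$ and $P_2$ have weight four, and one easily verifies that every properly contained sub-pattern has at most three $1$-entries (since shrinking any dimension or flipping any $1$-entry drops a $1$-entry from their tight bounding boxes); by Lemma~\ref{lem:threeone} every such sub-pattern has extremal function $O(n^2)$, and combined with $\ex(n,P_i,3)=\Theta(n^{5/2})=\omega(n^2)$ this makes each $P_i$ minimally non-$O(n^2)$. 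The main obstacle is the double-counting step for $P_1$: the key is isolating the correct $2$-dimensional auxiliary pattern $S_{r_1,r_2}$ and recognizing its structural constraint as $I_2$-avoidance, after which the quadratic inequality is routine.
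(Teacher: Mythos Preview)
Your argument is correct. The lower bounds (via the $2$-dimensional projection onto the $2\times 2$ all-ones matrix) and the upper bound for the second matrix (via Lemma~\ref{lem:lower} applied to the first) coincide with the paper's proof, as does the appeal to Lemma~\ref{lem:threeone} for minimality.

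The one genuine difference is the upper bound for the first matrix $P_1$. The paper observes that $P_1$ is, after exchanging the first two coordinates, exactly the matrix produced by applying the replication operation of Theorem~\ref{thm:stretch} to the $2\times 2$ all-ones matrix $J$; since $\ex(n,J)=\Theta(n^{3/2})$, Theorem~\ref{thm:stretch} immediately gives $\ex(n,P_1,3)=\Theta(n\cdot n^{3/2})$. Your route instead runs a direct K\H{o}v\'ari--S\'os--Tur\'an style double count: fixing a pair of second coordinates and showing the resulting set $S_{r_1,r_2}$ is $I_2$-free, hence of size at most $2n-1$, then summing and applying Cauchy--Schwarz. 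This is self-contained and avoids invoking the general stretch machinery, at the cost of being specific to this pattern; the paper's approach, by contrast, identifies $P_1$ as an instance of a general construction and gets both bounds at once from Theorem~\ref{thm:stretch}. Either method works cleanly here.
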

\begin{proof}
If $P$ is the first matrix, it could be obtained from the $2\times2$ all-ones matrix by replicating the second dimension to the third dimension as described in Theorem~\ref{thm:stretch} followed by exchanging the first two dimensions. By symmetry exchanging dimensions do not affect extremal function, so by Theorem~\ref{thm:stretch} $\ex(n,P,3)=\Theta(n\times n^{1.5})=\Theta(n^{2.5})$ as the $2\times2$ all-ones matrix has extremal function $\Theta(n^{1.5})$ \cite{FH1992}.

If $P$ is the second matrix,
projecting $P$ along the first dimension yields the $2\times2$ all-ones matrix. So by Lemma~\ref{lem:projection} $\ex(n,P,3)=\Omega(n\times n^{1.5})=\Omega(n^{2.5})$. For the upper bound, we start from the first matrix denoted as $Q$. We apply Lemma~\ref{lem:lower} to $Q$ along the first dimension,  lowering entry $Q(2,2,2)$ to obtain $P$.
Thus by Lemma~\ref{lem:lower} $$
\ex(n,P,3)\le\ex(n,Q,3)+n^2=O(\ex(n,Q,3))=O(n^{2.5})
$$.
\end{proof}

\begin{lem}
For any $3$-dimensional 0-1 matrix $P$ in
$$
\left\{
   \left(
			\begin{pmatrix}
			0 & 1 \\
			0 & 0 \\
			\end{pmatrix};
			\begin{pmatrix}
			1 & 0 \\
			1 & 0 \\
			\end{pmatrix};
			\begin{pmatrix}
			0 & 0 \\
			0 & 1 \\
			\end{pmatrix}
			\right),
   \left(
   			\begin{pmatrix}
			0 & 0 \\
			0 & 1 \\
			\end{pmatrix};
			\begin{pmatrix}
			0 & 1 \\
			1 & 0 \\
			\end{pmatrix};
			\begin{pmatrix}
			1 & 0 \\
			0 & 0 \\
			\end{pmatrix}
			\right)
   \right\},
$$
we have $\ex\left(n,P,3\right)=\Theta(n^{2.5})$ and $P$ is minimally non-$O(n^2)$.
\end{lem}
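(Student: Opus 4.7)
Write $P_1$ for the first matrix and $P_2$ for the second matrix in the set. The plan is to follow the same three-part template as Corollary~\ref{cor:diag} for each of $P_1$ and $P_2$. First, for the $\Omega(n^{5/2})$ lower bound, I would verify by direct inspection that projecting each of $P_1$ and $P_2$ along the first dimension yields the $2\times 2$ all-ones matrix: the entries $(1,1,2),(2,1,1),(2,2,1),(3,2,2)$ of $P_1$ cover every pair $(x_2,x_3)\in\{1,2\}^2$, and the entries $(1,2,2),(2,1,2),(2,2,1),(3,1,1)$ of $P_2$ do as well. Since the $2\times 2$ all-ones matrix has extremal function $\Theta(n^{3/2})$ by \cite{FH1992}, Lemma~\ref{lem:projection} immediately gives $\ex(n,P,3)=\Omega(n\cdot n^{3/2})=\Omega(n^{5/2})$ for both $P\in\{P_1,P_2\}$.

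For the matching $O(n^{5/2})$ upper bound I would, for each $P$, ``un-lower'' the singleton entry in its last $1$-layer along dimension $1$ to produce a two-layer matrix $Q$ from which $P$ is recovered by one application of Lemma~\ref{lem:lower}. Concretely this gives $Q_1 = \left(\begin{pmatrix} 0 & 1 \\ 0 & 0 \end{pmatrix}; \begin{pmatrix} 1 & 0 \\ 1 & 1 \end{pmatrix}\right)$ for $P_1$ and $Q_2 = \left(\begin{pmatrix} 0 & 0 \\ 0 & 1 \end{pmatrix}; \begin{pmatrix} 1 & 1 \\ 1 & 0 \end{pmatrix}\right)$ for $P_2$, and Lemma~\ref{lem:lower} then reduces the task to establishing $\ex(n,Q_i,3)=O(n^{5/2})$ for $i\in\{1,2\}$. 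Each $Q_i$ has a single entry in its first $1$-layer, an L-shaped triple of entries in its second $1$-layer, and still projects to the $2\times 2$ all-ones matrix along the first dimension, so it belongs to the same family of $\Theta(n^{5/2})$ patterns as the matrices from Corollary~\ref{cor:diag}. To pin down $\ex(n,Q_i,3)=O(n^{5/2})$ I would search for either an explicit containment $Q_i\subseteq R$ into a pattern $R$ already known to satisfy $\ex(n,R,3)=O(n^{5/2})$ — possibly derived from Theorem~\ref{thm:stretch} applied to the $2\times 2$ all-ones matrix after an exchange or reflection of dimensions — or a short composition of the operations from Lemmas~\ref{lem:lower}, \ref{lem:attach1}, and \ref{lem:insert1} that produces $Q_i$ from a matrix whose extremal function is already established.

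The minimality claim follows immediately from Lemma~\ref{lem:threeone}: each of $P_1$ and $P_2$ has exactly four $1$-entries, so deleting any one of them yields a 3-dimensional 0-1 matrix of weight three, whose extremal function is $O(n^{d-1})=O(n^2)$. Combined with $\ex(n,P,3)=\Theta(n^{5/2})=\omega(n^2)$ from the previous two steps, this confirms that $P$ is minimally non-$O(n^2)$.

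The main obstacle is the upper bound on $\ex(n,Q_i,3)$. Unlike the balanced $(2{+}2)$-entry two-layer matrix of Corollary~\ref{cor:diag}, which arises directly from Theorem~\ref{thm:stretch} applied to the $2\times 2$ all-ones matrix, the asymmetric $(1{+}3)$-entry L-shaped matrices $Q_1$ and $Q_2$ do not appear to be the output of any single construction lemma in the paper, so resolving this will likely require a carefully chosen intermediate matrix together with a short sequence of lemma applications, or else a direct double-counting argument exploiting the L-shape in the second $1$-layer of $Q_i$ together with its all-ones projection along the first dimension.
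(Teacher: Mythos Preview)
Your lower bound and minimality arguments are correct and match the paper. The gap is in the upper bound: you stop after one reduction step and are left with the two-layer matrices $Q_1,Q_2$ whose extremal functions you cannot bound. The missing idea is simply to apply Lemma~\ref{lem:lower} (and its mirror ``lifting'' version) a \emph{second} time. Each $P_i$ has one singleton entry in its first $1$-layer and one singleton entry in its last $1$-layer; undoing \emph{both} the lower and the lift collapses all four ones into a single $1$-layer, namely the $1\times 2\times 2$ all-ones matrix $R$. Concretely, the paper obtains $P_1$ from $R$ by lowering $R(1,2,2)$ and lifting $R(1,1,2)$, and $P_2$ from $R$ by lowering $R(1,1,1)$ and lifting $R(1,2,2)$; two applications of Lemma~\ref{lem:lower} then give $\ex(n,P_i,3)=O(\ex(n,R,3))$.

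The bound $\ex(n,R,3)=O(n^{5/2})$ is then immediate and requires no further lemma machinery: since $R$ lives in a single $1$-layer, any $R$-free $n\times n\times n$ matrix has every $1$-layer avoiding the two-dimensional $2\times 2$ all-ones matrix, hence each $1$-layer has weight $O(n^{3/2})$ and the total weight is $O(n\cdot n^{3/2})$. Your $Q_i$ are already one un-lift away from $R$ (e.g.\ un-lifting the entry $(1,1,2)$ of $Q_1$ yields exactly $R$), so your ``main obstacle'' dissolves with one more application of the same lemma you already used.
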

\begin{proof}
For each $P$ above,
projecting $P$ along the first dimension yields the $2\times2$ all-ones matrix, which has extremal function $\Theta(n^{1.5})$ \cite{FH1992}. So by Lemma~\ref{lem:projection} $\ex(n,P,3)=\Omega(n\times n^{1.5})=\Omega(n^{2.5})$.

The first matrix can be obtained from the  $1\times2\times2$ all-ones matrix $R$ by lowering the $1$-entry $R(1,2,2)$ and lifting the $1$-entry $R(1,1,2)$.
The second matrix can be obtained from the  $1\times2\times2$ all-ones matrix $R$ by lowering the $1$-entry $R(1,1,1)$ and lifting the $1$-entry $R(1,2,2)$. In either case by Lemma~\ref{lem:lower} we have $\ex(n,P,3)=O(\ex(n,R,3))$. We conclude by showing that $\ex(n,R,3)=O(n^{2.5})$. If $M$ is a $d$-dimensional matrix of dimensions $n\times n\times\dots\times n$  that avoids $R$, then every $1$-layer of $M$ avoids the $2\times2$ all-ones matrix, which has extremal function $\Theta(n^{1.5})$ \cite{FH1992}. So the weight of $M$ is $O(n\times n^{1.5})=O(n^{2.5})$.
\end{proof}

\begin{lem}
For the $3$-dimensional 0-1 matrix $P=
\left(
			\begin{pmatrix}
			0 & 1 \\
			1 & 0 \\
			\end{pmatrix};
			\begin{pmatrix}
			1 & 0 \\
			0 & 1 \\
			\end{pmatrix}
			\right)$,
$\ex\left(n,P,3 \right)=\Omega(n^{2.5})$, $\ex\left(n,P,3 \right)=O(n^{2.75})$, and $P$ is minimally non-$O(n^2)$.
\end{lem}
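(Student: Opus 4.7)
The lemma bundles three claims about the $2\times 2\times 2$ matrix $P$: a lower bound $\Omega(n^{5/2})$, an upper bound $O(n^{11/4})$, and minimality among non-$O(n^2)$ $3$-dimensional patterns. The first and third follow from tools established earlier in the paper, so the main effort goes into the upper bound.

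For the lower bound, I would project $P$ along its third dimension. The four $1$-entries $(1,1,2),(1,2,1),(2,1,1),(2,2,2)$ project to the four distinct cells of $\{1,2\}^2$, so the projection $\bar P$ is the $2\times 2$ all-ones matrix $J_2$. Since $\ex(n,J_2)=\Theta(n^{3/2})$ by K\H{o}v\'ari--S\'os--Tur\'an, Lemma~\ref{lem:projection} then yields $\ex(n,P,3)=\Omega\bigl(n\cdot n^{3/2}\bigr)=\Omega(n^{5/2})$.

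For the upper bound, my plan is to bootstrap on the observation that $P$ is contained in the $2\times 2\times 2$ all-ones matrix $K$ (deleting the four odd-parity $1$-entries from $K$ leaves exactly $P$), so by Proposition~\ref{prop:mono} it suffices to show $\ex(n,K,3)=O(n^{11/4})$. That bound is the $3$-partite instance of a classical Erd\H{o}s hypergraph Tur\'an bound, and I would prove it by a K\H{o}v\'ari--S\'os--Tur\'an style double count on links. Given a $K$-free $A$ with $N$ ones, for each $k\in[n]$ introduce the link $L_k=\{(i,j):A(i,j,k)=1\}\subseteq[n]^2$. A copy of $K$ would require $k_1<k_2$ with $L_{k_1}\cap L_{k_2}$ containing $J_2$, so $K$-freeness forces $|L_{k_1}\cap L_{k_2}|\le\ex(n,J_2)=O(n^{3/2})$ for every pair. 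Summing, $\sum_{k_1<k_2}|L_{k_1}\cap L_{k_2}|=\sum_{(i,j)}\binom{w_{i,j}}{2}$ where $w_{i,j}=|\{k:A(i,j,k)=1\}|$, and Cauchy--Schwarz gives $\sum_{(i,j)}\binom{w_{i,j}}{2}\ge\frac{N^2}{2n^2}-\frac{N}{2}$. Combining the two estimates produces $N^2=O(n^{11/2}+n^2N)$, hence $N=O(n^{11/4})$.

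For minimality, $P$ has exactly four $1$-entries, so every submatrix properly contained in $P$ has at most three $1$-entries and therefore extremal function $O(n^2)$ by Lemma~\ref{lem:threeone}. Combined with the lower bound (which forces $\ex(n,P,3)=\omega(n^2)$), this certifies that $P$ is minimally non-$O(n^2)$. The only real obstacle in the whole argument is keeping the link double count tight to $n^{11/4}$, which is a routine but delicate application of K\H{o}v\'ari--S\'os--Tur\'an once one upgrades $P$-freeness to $K$-freeness via monotonicity.
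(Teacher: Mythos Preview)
Your argument is correct and follows essentially the same route as the paper: project to $J_2$ for the lower bound, contain $P$ in the $2\times 2\times 2$ all-ones cube and use monotonicity for the upper bound, and invoke Lemma~\ref{lem:threeone} for minimality. The only difference is cosmetic---you project along dimension~$3$ rather than dimension~$1$ (both yield $J_2$), and you supply a self-contained K\H{o}v\'ari--S\'os--Tur\'an link count for $\ex(n,K,3)=O(n^{11/4})$ where the paper simply cites \cite{GT2017}.
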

\begin{proof}
Projecting $P$ along the first dimension yields the $2\times2$ all-ones matrix, which has extremal function $\Theta(n^{1.5})$ \cite{FH1992}. So by Lemma~\ref{lem:projection} $\ex(n,P,3)=\Omega(n\times n^{1.5})=\Omega(n^{2.5})$.
For the upper bound, $P$ is contained in a $2\times2\times2$ all-ones matrix, which has extremal function $O(n^{2.75})$ \cite{GT2017}. From monotonicity in Proposition~\ref{prop:mono} the upper bound follows.
\end{proof}

Next we prove bounds on the dimensions of minimally non-$O(n^{d-1})$ $d$-dimensional 0-1 matrices. One might hope to obtain a bound between the longest dimension of a minimally non-$O(n^{d-1})$ $d$-dimensional 0-1 matrix and its shortest dimension, but it is impossible to obtain a finite upper bound for $d > 2$. For example, consider any minimally nonlinear 0-1 matrix $P$ of dimensions $j \times k$, and let $R$ be the $d$-dimensional 0-1 matrix of dimensions $j \times k \times 1 \times \dots \times 1$ which has $P$ as a projection on the first two coordinates. Then $R$ is minimally non-$O(n^{d-1})$, but the ratio between its maximum and minimum dimension lengths can be arbitrarily high. Thus we cannot obtain an upper bound only between the longest dimension of a minimally non-$O(n^{d-1})$ $d$-dimensional 0-1 matrix and its shortest dimension, but as we see in the next result, it is possible to obtain an upper bound between the longest dimension and all other dimensions. The next result generalizes a result for minimally nonlinear 0-1 matrices from \cite{Crowdmath2018}.

\begin{thm}\label{thm:minnonlin_kd}
For any minimally non-$O(n^{d-1})$ $d$-dimensional 0-1 matrix of dimensions $k_1 \times k_2 \times \dots \times k_d$ with $k_1 \le k_2 \le \dots \le k_d$, \[k_d \le 1+2 \sum_{i =1}^{d-1} (2k_i - 2).\]
\end{thm}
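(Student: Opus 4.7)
The plan is to proceed by contradiction. Assume $P$ is minimally non-$O(n^{d-1})$ with dimensions $k_1 \le \cdots \le k_d$ and that $k_d$ violates the claimed bound. The goal is to produce a proper subpattern $P'$ of $P$ with $\ex(n,P,d) = O(\ex(n,P',d))$, forcing $\ex(n,P,d) = O(n^{d-1})$ by minimality and contradicting $\ex(n,P,d) = \omega(n^{d-1})$. The reduction lemmas I would use are Lemma~\ref{lem:insert1} (an empty $d$-layer can be removed at the cost of a constant factor), Lemma~\ref{lem:insertbetween} (a singleton-$1$-entry $d$-layer nestled between two neighbors sharing its $1$-row can be removed), and Lemma~\ref{lem:lower} (a top or bottom $1$-entry in any direction can be lowered or lifted with only a constant-factor blowup).

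First, Lemma~\ref{lem:insert1} ensures every $d$-layer $L_1, \ldots, L_{k_d}$ of $P$ contains a $1$-entry, since an empty layer would otherwise give a proper subpattern $P^\star$ with $\ex(n,P,d) = O(\ex(n,P^\star,d)) = O(n^{d-1})$. For each direction $i \in [d-1]$ and each $j \in [k_d]$, define the top silhouette $t_i(j) \in [k_i]$ to be the maximum $i$-coordinate of any $1$-entry in $L_j$, and $b_i(j) \in [k_i]$ the minimum. I would argue the key structural claim: no two consecutive layers $L_j, L_{j+1}$ can satisfy $t_i(j) = t_i(j+1)$ and $b_i(j) = b_i(j+1)$ simultaneously for every $i \in [d-1]$. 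Intuitively, in this all-silhouettes-agree case one of the three reduction operations above must apply, either because there is a singleton-layer to which Lemma~\ref{lem:insertbetween} applies, or because a top or bottom entry in some direction can be shifted into its neighbor by Lemma~\ref{lem:lower} to exhibit $P$ as arising from a strictly smaller pattern. Hence each non-initial layer records a transition in some silhouette.

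A charging argument then completes the bound. For each direction $i \in [d-1]$, the values of $t_i$ and $b_i$ both lie in $[k_i]$; excluding the two extreme values of $[k_i]$, which each admit only one endpoint event rather than two, each of the top and bottom sides contributes at most $2(k_i - 1)$ transition events across the layer sequence, so direction $i$ contributes $2(2k_i - 2)$ events in total. Summing over $i \in [d-1]$ bounds the number of non-initial layers by $2\sum_{i=1}^{d-1}(2k_i - 2)$, and including the initial layer $L_1$ yields $k_d \le 1 + 2\sum_{i=1}^{d-1}(2k_i - 2)$.

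The principal obstacle will be justifying the silhouette-agreement claim rigorously: the three reduction lemmas modify $P$ at a single location or remove a single sparse layer, so combining them to cover every ``all silhouettes agree'' configuration demands a careful case analysis. In particular, one must separately treat the case where $L_{j+1}$ has a single $1$-entry (which is directly handled by Lemma~\ref{lem:insertbetween}), the case where a silhouette entry coincides with an extreme corner of $L_{j+1}$ (handled by Lemma~\ref{lem:lower}), and the case where neither applies directly and one first needs to lift or lower an interior entry to create the required configuration. Care is also needed in the charging step to avoid double-counting transitions that change several silhouettes at once, which can be addressed by assigning each transition a canonical direction and side.
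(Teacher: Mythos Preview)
Your plan has two genuine gaps, one in each half of the argument.

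\textbf{The charging step is not justified.} You assert that each silhouette $t_i$ (and $b_i$) contributes at most $2(k_i-1)$ transitions across the layer sequence, but nothing you have said prevents a silhouette from oscillating. The sequence $t_i(1),t_i(2),\dots,t_i(k_d)$ could in principle be $1,2,1,2,\dots$, giving $k_d-1$ transitions on just two values. Bounding the number of transitions by something like $2k_i-2$ requires a Davenport--Schinzel type statement: the sequence (with consecutive repeats removed) has no $abab$ subsequence. That in turn would need you to argue that an $abab$ in some $t_i$ forces $P$ to contain a proper subpattern whose $2$-dimensional projection on dimensions $i,d$ is
\[
\begin{pmatrix} 1 & 0 & 1 & 0 \\ 0 & 1 & 0 & 1 \end{pmatrix}
\]
or a reflection, which already has extremal function $\omega(n)$. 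You never invoke this, and without it the bound $2(2k_i-2)$ per direction is unsupported.

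\textbf{The structural claim is not established by the lemmas you cite.} If $L_j$ and $L_{j+1}$ share all top and bottom silhouettes but both have several $1$-entries (say $L_j$ hits $i$-coordinates $\{1,5\}$ and $L_{j+1}$ hits $\{1,3,5\}$ in some direction $i$), then Lemma~\ref{lem:insertbetween} does not apply (no singleton layer), and Lemma~\ref{lem:lower} only moves an entry sitting on a global boundary face of $P$ into a freshly attached layer --- it does not let you ``lift or lower an interior entry'' as your last paragraph suggests. You correctly flag this as the principal obstacle, but the proposed resolution is not available.

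The paper sidesteps both issues by a different device: rather than tracking silhouettes, it selects a \emph{single} $1$-entry $o_j$ in each $d$-layer (chosen, when possible, to differ from $o_{j-1}$), and records for each $i\in[d-1]$ the sequence $S_i$ of $i$-coordinates of the $o_j$ with consecutive repeats suppressed. The selection rule guarantees that if two consecutive $o_j$'s fail to contribute to any $S_i$, the middle layer is a genuine singleton sandwiched between equal neighbours, so Lemma~\ref{lem:insertbetween} removes it; this yields $k_d-1\le 2\sum_i(|S_i|-1)$. Finally each $S_i$ is shown to avoid $abab$ (else the forbidden $2$-dimensional projection above appears), so $|S_i|\le 2k_i-1$ by the classical Davenport--Schinzel bound. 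Your silhouette idea could perhaps be repaired along these lines, but as written it is missing exactly the two ingredients --- the single-entry selection and the $abab$-avoidance --- that make the counting go through.
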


\begin{proof}
Suppose that $P$ is a minimally non-$O(n^{d-1})$ $d$-dimensional 0-1 matrix with dimensions $k_1 \le k_2 \le \dots \le k_d$. We first partition $P$ into $k_d$ $d$-layers where the entries in $d$-layer $j$ have $d^{\text{th}}$ coordinate $j$ for each $j\in[k_d]$. We can assume that there is no $2$-dimensional projection of $P$ on any two dimensions which is equal to 
$$
\begin{pmatrix} 1 & 0 & 1 & 0 \\
0 & 1 & 0 & 1 \end{pmatrix}
$$
or any of its reflections or rotations. If there is, consider the smallest submatrix $P'$ of $P$ containing the four corresponding $1$-entries. By applying Lemma~\ref{lem:projection} $(d-2)$ times we see that $\ex(n,P',d)=\omega(n^{d-1})$, which contradicts the fact that $P$ is minimally non-$O(n^{d-1})$ unless $P = P'$. If $P = P'$, then clearly we have $k_d \le 1+2 \sum_{i =1}^{d-1} (2k_i - 2)$.

For each $d$-layer of $P$, we cannot have all entries equal to zero since otherwise $P$ would properly contain some non-$O(n^{d-1})$ $d$-dimensional 0-1 matrix by Lemmas~\ref{lem:attach1} and \ref{lem:insert1}. We construct a new $d$-dimensional matrix $Q$ from $P$ with the same dimensions but only a single $1$-entry in each $d$-layer. We scan through the $d$-layers $j$ in the order $j=1$ to $k_d$. For each $d$-layer $j$, we define that $d$-layer $j$ of $Q$ is equal to $d$-layer $j$ of $P$ if $d$-layer $j$ of $P$ only has a single $1$-entry. Otherwise we pick only a single $1$-entry in $d$-layer $j$ of $P$ which differs in some coordinate from the $1$-entry in $d$-layer $j-1$ of $Q$. If there are multiple such $1$-entries, then we pick the $1$-entry with the lexicographically minimal coordinates. We include only the picked $1$-entry in $d$-layer $j$ of $Q$, and we make all other entries in $d$-layer $j$ of $Q$ equal to zero. We use $o_j$ to refer to the $1$-entry in $d$-layer $j$ of $Q$.

Next we construct $d-1$ sequences $S_1, \dots S_{d-1}$ from $Q$. The sequence $S_i$ has first element equal to the $i^{\text{th}}$ coordinate of $o_1$ for each $i\in[d-1]$. For each $j < k_d$, we append the $i^{\text{th}}$ coordinate of $o_{j+1}$ to $S_i$ if it is different from the last element of $S_i$. Observe that if no coordinate of $o_{j+1}$ is appended to any sequence for some $j<k_d$, then the first $d-1$ coordinates of $o_{j}$ and $o_{j+1}$ are identical. Thus for any $j<k_d-1$ it is impossible that both $o_{j+1}$ and $o_{j+2}$ have no coordinate appended to any sequence because that would imply that $o_j,o_{j+1},o_{j+2}$ all have the same $i^{\text{th}}$ coordinate for all $i\in[d-1]$, $P$ does not have any other $1$-entry in $d$-layer $j+1$, and by Lemma~\ref{lem:insertbetween} removing $d$-layer $j+1$ from $P$ would change $\ex(n,P,d)$ by at most a constant factor. Thus $k_d-1$ is at most twice $\sum_{i=1}^{d-1}(|S_i|-1)$ where $|S_i|$ is the length of sequence $S_i$. Each sequence $S_i$ has no immediate repetitive letters and avoids alternating subsequences of length $4$, or else $P$ would contain some $d$-dimensional 0-1 matrix with a $2$-dimensional projection on dimensions $i,d$ equal to
$$
\begin{pmatrix} 1 & 0 & 1 & 0 \\
0 & 1 & 0 & 1 \end{pmatrix}
$$
or one of its reflections or rotations, so $P$ would not be minimally non-$O(n^{d-1})$. Thus sequence $S_i$ has length at most $2k_i-1$ by \cite{DS1965}, so $k_d \le 1 + 2 \sum_{i =1}^{d-1} (2k_i - 2).$
\end{proof}

Given the dimensions of a minimally non-$O(n^{d-1})$ $d$-dimensional 0-1 matrix, we obtain an upper bound on the number of $1$-entries. Like the last theorem, the next result generalizes an upper bound from \cite{Crowdmath2018}.

\begin{thm}
Suppose that $P$ is a minimally non-$O(n^{d-1})$ $d$-dimensional 0-1 matrix of dimensions $k_1 \times k_2 \times \dots \times k_d$ for which $P$ does not have all ones, two dimensions of length $2$, and all other dimensions of length $1$. Then the weight of $P$ is at most \[k_d-1+\prod_{i=1}^{d-1} k_i.\]
\end{thm}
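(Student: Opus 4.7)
The plan is to partition the $1$-entries of $P$ according to their first $d-1$ coordinates. For each tuple $(x_1, \ldots, x_{d-1}) \in [k_1] \times \cdots \times [k_{d-1}]$, let $c(x_1, \ldots, x_{d-1})$ denote the number of $1$-entries of $P$ in the corresponding $d$-row, so that $w(P) = \sum c(x_1, \ldots, x_{d-1})$. I would establish the structural claim that, under the hypotheses of the theorem, at most one such tuple satisfies $c(x_1, \ldots, x_{d-1}) \ge 2$. Granted this claim, the bound is immediate: the single exceptional $d$-row contributes at most $k_d$ entries, and each of the remaining $\prod_{i=1}^{d-1} k_i - 1$ $d$-rows contributes at most one, for a total of at most $k_d - 1 + \prod_{i=1}^{d-1} k_i$ entries.

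To establish the structural claim, I argue by contradiction. Assume there are distinct tuples $(a_1, \ldots, a_{d-1})$ and $(b_1, \ldots, b_{d-1})$ each indexing a $d$-row with at least two $1$-entries. Pick $i^* \in [d-1]$ with $a_{i^*} \ne b_{i^*}$, and select $1$-entries of $P$ at $(a_1,\ldots,a_{d-1},\alpha_1)$, $(a_1,\ldots,a_{d-1},\alpha_2)$, $(b_1,\ldots,b_{d-1},\beta_1)$, $(b_1,\ldots,b_{d-1},\beta_2)$ with $\alpha_1 < \alpha_2$ and $\beta_1 < \beta_2$. Let $Q$ be the minimal submatrix of $P$ containing these four entries, and let $\pi(Q)$ be the $2$-dimensional projection of $Q$ onto dimensions $i^*$ and $d$. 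Iterating Lemma~\ref{lem:projection} along the remaining dimensions yields $\ex(n, Q, d) = \Omega(n^{d-2} \cdot \ex(n, \pi(Q), 2))$.

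When $\{\alpha_1, \alpha_2\} = \{\beta_1, \beta_2\}$, $\pi(Q)$ is the $2 \times 2$ all-ones matrix, which has extremal function $\Theta(n^{3/2})$ by \cite{FH1992}, so $\ex(n, Q, d) = \omega(n^{d-1})$. If $Q \subsetneq P$, this contradicts the minimality of $P$. Otherwise $Q = P$: then the four chosen entries span all dimensions of $P$, forcing $k_i \le 2$ for $i \ne d$, $k_d \le 2$, and $P$ to consist of exactly these four entries, which up to relabelling dimensions is the excluded $2\times 2$ all-ones matrix in $d$ dimensions. In the complementary sub-case $\{\alpha_1,\alpha_2\}\ne\{\beta_1,\beta_2\}$, I would apply Lemma~\ref{lem:lower} repeatedly along dimension $d$ (together with Lemmas~\ref{lem:attach1} and \ref{lem:insert1} to dispose of resulting empty $d$-layers) to slide the entries of $Q$ and align $\{\alpha_1,\alpha_2\}$ with $\{\beta_1,\beta_2\}$, producing a modified matrix $Q'$ whose $(i^*, d)$-projection is the $2\times 2$ all-ones matrix. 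Since $Q'$ is obtained from $Q$ via the operations of Lemma~\ref{lem:lower}, we have $\ex(n, Q', d) = O(\ex(n, Q, d))$, whence $\ex(n, Q, d) = \Omega(\ex(n, Q', d)) = \omega(n^{d-1})$, and the same dichotomy between $Q \subsetneq P$ and $Q = P$ applies.

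The main obstacle is this alignment step: I must check that sliding entries of $Q$ along dimension $d$ can always convert $\pi(Q)$ into the $2\times 2$ all-ones matrix while keeping control over whether the resulting submatrix is still properly contained in $P$ or has grown to equal $P$. The case $Q = P$ requires a careful secondary analysis, using that any further removal of a $1$-entry produces an $O(n^{d-1})$ extremal function, to conclude that $P$ must be precisely the excluded configuration. I expect this alignment-and-classification argument to be the most delicate technical ingredient of the proof.
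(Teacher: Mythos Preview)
Your structural claim---that at most one $d$-row of $P$ contains two or more $1$-entries---is false. The matrix
\[
Q_1=\begin{pmatrix}1&1&0\\1&0&1\end{pmatrix}
\]
is minimally nonlinear (so minimally non-$O(n^{d-1})$ for $d=2$), is not the excluded $2\times2$ all-ones configuration, yet both of its $2$-rows contain two $1$-entries. Your contradiction argument therefore cannot succeed as stated. Concretely, with $\alpha_1=1,\alpha_2=2,\beta_1=1,\beta_2=3$ you are in the complementary sub-case $\{\alpha_1,\alpha_2\}\neq\{\beta_1,\beta_2\}$ with $Q=P=Q_1$; the ``alignment'' via Lemma~\ref{lem:lower} cannot move the entry at $(1,2)$ since that entry is not on an extremal $d$-layer, and even if you could produce some $Q'$ with $\ex(n,Q',d)=\omega(n^{d-1})$ and $\ex(n,Q,d)=\Omega(\ex(n,Q',d))$, the case $Q=P$ gives no contradiction: you only recover $\ex(n,P,d)=\omega(n^{d-1})$, which is the hypothesis. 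Your handling of the $Q=P$ branch in the second sub-case is circular.

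The paper's argument uses a different decomposition. Rather than bounding the number of $d$-rows with many $1$-entries, it deletes the \emph{first} $1$-entry from each $d$-row to obtain $P'$, and then shows that every $d$-layer of $P'$ contains at most one $1$-entry. The key observation is that two $1$-entries of $P'$ in the same $d$-layer, together with the two deleted first entries in their respective $d$-rows, would project (on dimensions $i^*$ and $d$) to the $2\times2$ all-ones matrix, to $Q_1$, or to one of its reflections---and each of these already has extremal function $\omega(n)$, so the four entries alone witness non-minimality without any alignment step. Since the first $d$-layer of $P'$ is empty, $w(P')\le k_d-1$, and adding back at most one deleted entry per $d$-row gives the bound.
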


\begin{proof}
Let $P$ be a minimally non-$O(n^{d-1})$ $d$-dimensional 0-1 matrix $P$ such that $P$ does not have all ones, two dimensions of length $2$, and all other dimensions of length $1$. The result is true if $P$
has a $2$-dimensional projection on some two dimensions which is equal to 
$$Q_1=
\begin{pmatrix}
1 & 1 & 0\\
1 & 0 & 1\\
\end{pmatrix}
$$
or any of its reflections or rotations.
This is because $P$ must have exactly four $1$-entries, which does not exceed the upper bound in the statement. If $P$ has more than four $1$-entries, then there exists a $1$-entry $o$ of $P$ and dimensions $i,j$ such that the $2$-dimensional projection of $P$ on dimensions $i,j$ after removal of $o$ is still equal to $Q_1$ or any of its reflections or rotations. Remove $o$ to obtain $Q$. By Corollary~\ref{cor:2dprojection} $\ex(n,Q,d)=\Omega(n^{d-2}\ex(n,Q_1))=\Omega(n^{d-1}\log n)=\omega(n^{d-1})$ as $\ex(n,Q_1)=\Theta(n\log n)$ \cite{furedi1990maximum}. In other words $P$ is not minimally non-$O(n^{d-1})$.
  If $P$ has at least three dimensions of length greater than $1$ and has a $2$-dimensional projection on some two dimensions equal to the $2 \times 2$ all-ones matrix, then by similar arguments and that the extremal function of the $2\times 2$ all-ones matrix is $\Theta(n^{1.5})=\omega(n)$ \cite{FH1992} matrix $P$ must have exactly four $1$-entries and thus the result is also true. Hence, we can assume that there is no $2$-dimensional projection of $P$ on any two dimensions which is equal to the $2 \times 2$ all-ones matrix, $Q_1$, 
or any of its reflections or rotations, since $P$ is minimally non-$O(n^{d-1})$. Given $P$, let $P'$ be the $d$-dimensional 0-1 matrix obtained from $P$ by changing the first $1$-entry in each $d$-row to a $0$-entry. Note that $P'$ cannot have any $d$-layer with multiple $1$-entries, or else there would be some $2$-dimensional projection of $P$ on some two dimensions to be described below which is equal to the $2 \times 2$ all-ones matrix,
$Q_1$,
or one of its reflections or rotations. This is because if $P'$ has two $1$-entries $o_1$ and $o_2$ in the same $d$-layer, then suppose that $o_1$ and $o_2$ have different $i^{\text{th}}$ coordinates. The projection of $o_1$, $o_2$, and the removed $1$-entries in the same $d$-rows as $o_1$ and $o_2$ to dimensions $d$ and $i$ must be the $2\times2$ all-ones matrix, $Q_1$, or one of their reflections or rotations. This contradicts our assumption about $P$. Since $P'$ cannot have any $1$-entry in its first $d$-layer, $P'$ has at most $k_d - 1$ $1$-entries. Thus the number of $1$-entries in $P$ is at most \[k_d-1+\prod_{i=1}^{d-1} k_i.\]
\end{proof}

In the next result, we obtain an upper bound on the number of minimally non-$O(n^{d-1})$ $d$-dimensional 0-1 matrices with first $d-1$ dimensions $k_1 \times k_2 \times \dots \times k_{d-1}$. This generalizes a result from \cite{Crowdmath2018}. 

\begin{thm}
Let
$$
S(k_1, k_2, \dots, k_{d-1}) = 1+2\sum_{j = 1}^{d-1}(2k_i-2)$$
and 
$$P(k_1, k_2, \dots, k_{d-1}) = \prod_{i = 1}^{d-1}k_i.$$
The number of minimally non-$O(n^{d-1})$ $d$-dimensional 0-1 matrices with first $d-1$ dimensions $k_1 \times k_2 \times \dots \times k_{d-1}$ is at most \[\sum_{j = 1}^{S(k_1, k_2, \dots, k_{d-1})} ((j+1)^{P(k_1, k_2, \dots, k_{d-1})}-j^{P(k_1, k_2, \dots, k_{d-1})})P(k_1, k_2, \dots, k_{d-1})^{j-1}.\]
\end{thm}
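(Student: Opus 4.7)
The plan is to combine the dimension bound from Theorem~\ref{thm:minnonlin_kd} with a structural encoding of each minimally non-$O(n^{d-1})$ matrix, leveraging minimality via Lemmas~\ref{lem:attach1}, \ref{lem:insert1}, and \ref{lem:insertbetween}. First, by Theorem~\ref{thm:minnonlin_kd}, the last dimension $k_d$ of any minimally non-$O(n^{d-1})$ matrix whose first $d-1$ dimensions are $k_1 \times \dots \times k_{d-1}$ (after reordering dimensions if necessary, which does not affect the count of matrices up to the underlying combinatorial structure) is at most $S(k_1,\dots,k_{d-1})$. Hence it suffices to bound the number $N(j)$ of such matrices with $k_d = j$ for each $j \in [1, S(k_1,\dots,k_{d-1})]$ and sum the resulting bounds.

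For each fixed $j$, I plan to construct an injection from the set of minimally non-$O(n^{d-1})$ matrices of dimensions $k_1 \times \dots \times k_{d-1} \times j$ into a set of size $((j+1)^P - j^P)\, P^{j-1}$, where $P = P(k_1,\dots,k_{d-1}) = \prod_{i=1}^{d-1} k_i$ is the number of $d$-rows. The encoding consists of two parts: (i) a function $f : [P] \to \{0, 1, \dots, j\}$ recording, for each $d$-row, a distinguished last coordinate of a $1$-entry in that row (with $0$ reserved for empty rows); and (ii) a sequence $(g_1, \dots, g_{j-1}) \in [P]^{j-1}$ recording, for each of the first $j-1$ $d$-layers, the index in $[P]$ of a distinguished $1$-entry in that layer. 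The minimality of the matrix, combined with Lemmas~\ref{lem:attach1} and \ref{lem:insert1}, forces the terminal $d$-layer to be non-empty (otherwise it could be deleted to yield a proper non-$O(n^{d-1})$ submatrix), so $f$ must attain the value $j$; this produces at most $(j+1)^P - j^P$ choices for $f$, and the free choice of the sequence $g$ contributes a factor of $P^{j-1}$.

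The main obstacle is verifying that this encoding is injective on minimally non-$O(n^{d-1})$ matrices. The key tool here is Lemma~\ref{lem:insertbetween}: if two distinct minimally non-$O(n^{d-1})$ matrices shared the same encoded pair $(f, g)$, their symmetric difference of $1$-entries would lie in the intermediate $d$-layers and could be analyzed using Lemma~\ref{lem:insertbetween}, together with Lemmas~\ref{lem:attach1} and \ref{lem:insert1} for any resulting empty layers. Any such discrepancy would allow one of the matrices to be reduced to a proper submatrix that is still non-$O(n^{d-1})$, contradicting minimality and hence ruling out the discrepancy. Once injectivity is established, summing $((j+1)^P - j^P)\, P^{j-1}$ over $j$ from $1$ to $S(k_1,\dots,k_{d-1})$ yields the claimed upper bound, generalizing the two-dimensional argument from \cite{Crowdmath2018}.
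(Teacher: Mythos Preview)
Your overall framework matches the paper's: bound $k_d \le S$ via Theorem~\ref{thm:minnonlin_kd}, then for each $j$ encode a matrix by (a) one marked $1$-entry per $d$-row and (b) one marked $1$-entry per $d$-layer, yielding the factors $(j+1)^P - j^P$ and $P^{j-1}$. However, your injectivity argument has a real gap.

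The pair $(f,g)$ you describe pins down at most $P + (j-1)$ specific $1$-entries, but a $k_1 \times \dots \times k_{d-1} \times j$ matrix can have up to $Pj$ ones; nothing in your encoding rules out two distinct minimally non-$O(n^{d-1})$ matrices sharing the same $(f,g)$ while differing in the un-marked positions. Lemma~\ref{lem:insertbetween} does not help here: that lemma concerns inserting single-$1$ layers between two \emph{consecutive same-row} $1$-entries, not removing arbitrary entries in a symmetric difference of two matrices. The claim that the discrepancy ``lies in intermediate layers and can be analyzed via Lemma~\ref{lem:insertbetween}'' is not substantiated, and I do not see how to make it work.

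The paper closes this gap with a different structural fact rather than an injectivity argument. It records, for each $d$-row, the \emph{first} $1$-entry (the $(j+1)^P - j^P$ factor then comes from the first $d$-layer being nonempty, via Lemma~\ref{lem:attach1}), and then proves that \emph{after deleting these first entries}, every remaining $d$-layer has at most one $1$-entry. The reason: two surviving $1$-entries in the same $d$-layer, together with the deleted first-in-row entries that precede them, force a $2$-dimensional projection onto dimension $d$ and one other dimension to contain the $2\times 2$ all-ones matrix or $\begin{pmatrix}1&1&0\\1&0&1\end{pmatrix}$ (up to reflection). Each has non-$O(n)$ extremal function, so by Corollary~\ref{cor:2dprojection} the matrix would properly contain a non-$O(n^{d-1})$ pattern, contradicting minimality. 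This is the missing idea: it is what makes the choice of one entry per layer an \emph{exhaustive description} of the residual matrix, rather than merely an encoding whose injectivity must be separately argued.
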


\begin{proof}
In a minimally non-$O(n^{d-1})$ $d$-dimensional 0-1 matrix of dimensions $k_1 \times k_2 \times \dots \times k_{d-1} \times j$, there are at most $(j+1)^{P(k_1, k_2, \dots, k_{d-1})}-j^{P(k_1, k_2, \dots, k_{d-1})}$ combinations of first $1$-entries that can be deleted in each $d$-row. Indeed, $P(k_1, k_2, \dots, k_{d-1})$ counts the number of $d$-rows, $(j+1)^{P(k_1, k_2, \dots, k_{d-1})}$ counts the number of ways to choose a $d$-layer which contains the first $1$-entry or to not have a $1$-entry for each $d$-row, and $j^{P(k_1, k_2, \dots, k_{d-1})}$ counts the number of ways to choose a $d$-layer among the last $j-1$ $d$-layers which contains the first $1$-entry or to not have a $1$-entry for each $d$-row. Having all of the first $1$-entries in the last $j-1$ $d$-layers would imply that the first $d$-layer has all zeroes, which is impossible by Lemma~\ref{lem:attach1}. After the first $1$-entries are deleted in each $d$-row, the first $d$-layer has no $1$-entries and each $d$-layer except the first has at most a single $1$-entry, since otherwise there would be some $2$-dimensional projection to some two dimensions including dimension $d$ which is equal to the $2 \times 2$ all-ones matrix,
$$
\begin{pmatrix}
1 & 1 & 0 \\
1 & 0 & 1
\end{pmatrix},
$$ or one of its reflections or rotations. If a $d$-layer has no $1$-entry removed, then there are at most $P(k_1, k_2, \dots, k_{d-1})$ possibilities for the location of its $1$-entry. Otherwise if the $d$-layer has some $1$-entry removed, then after that removal it either has no $1$-entries or it has some $1$-entry in a location different from the removed $1$-entry, so there are at most $P(k_1, k_2, \dots, k_{d-1})$ possibilities for the entries in the $d$-layer. In either case, every $d$-layer except the first has at most at most $P(k_1, k_2, \dots, k_{d-1})$ possibilities for its entries, so there are at most $P(k_1, k_2, \dots, k_{d-1})^{j-1}$ possible matrices. The upper bound of $S(k_1, k_2, \dots, k_{d-1})$ in the sum follows from Theorem~\ref{thm:minnonlin_kd}.
\end{proof}

For every positive integer $n$, Tardos constructed a family $\mathcal{J}_n\cup\mathcal{D}_n$ of $n\times n$ 0-1 matrices of weight $n$ \cite{Tardos2005} with the property that $\ex(n,\mathcal{J}_{n_0}\cup\mathcal{D}_{n_0})=O(1)$ for any fixed positive integer $n_0$. For any family $\mathcal{P}$ of 0-1 matrices, either $\mathcal{J}_{n_0}\cup\mathcal{D}_{n_0}$ has an element avoiding all elements of $\mathcal{P}$ for any positive integer $n_0$, in which case $\ex(n,\mathcal{P})=\Omega(n)$, or there exists a positive integer $n_0$ such that every element of $\mathcal{J}_{n_0}\cup\mathcal{D}_{n_0}$ contains some element of $\mathcal{P}$, implying that $\ex(n,\mathcal{P})=O(1)$.
Before generalizing to $d$-dimensional 0-1 matrices, we first give some definitions. The $d$-dimensional identity matrix $I_{n_0,d}$ is a $d$-dimensional 0-1 matrix of dimensions $n_0\times n_0\times\dots\times n_0$ where an entry is a $1$-entry if and only if all of its coordinates are equal. An equivalent $I'$ of $I_{n_0,d}$ is a permutation matrix of the same dimensions as $I_{n_0,d}$ such that if we order the $1$-entries by their first coordinates, then for every integer $i\in[2,d]$ their $i^{\text{th}}$ coordinates are either increasing or decreasing. For each of these $2^{d-1}$ equivalents we can define a unique partial order among entries in any $d$-dimensional matrix $M$, such that two entries $M(x_1,x_2,\dots,x_d)$ and $M(y_1,y_2,\dots,y_d)$ are comparable if for every integer $i\in[2,d]$ the quantity $(x_1-y_1)(x_i-y_i)$ is positive when the $i^{\text{th}}$ coordinates of the $1$-entries of $I'$ ordered by their first coordinates are increasing and negative when the $i^{\text{th}}$ coordinates are decreasing.

\begin{lem}\label{lem:JD}
Let $\mathcal{D}_{n_0,d}$ denote the set of $2^{d-1}$ equivalents of the $d$-dimensional identity matrix $I_{n_0,d}$, and let $\mathcal{J}_{n_0,d}$ denote the set of all $d$-dimensional $n_0\times n_0\times\ldots\times n_0$ 0-1 matrices with exactly $n_0$ $1$-entries such that every pair of $1$-entries have the same coordinate in at least one dimension. For any family $\mathcal{P}$ of $d$-dimensional 0-1 matrices, if there exists $n_0\in\mathbb{N}$ such that no element of $\mathcal{J}_{n_0,d}\cup\mathcal{D}_{n_0,d}$ avoids all patterns in $\mathcal{P}$, then $\ex(n,\mathcal{P},d)=O(1)$. Otherwise $\ex(n,\mathcal{P},d)\geq n$.
\end{lem}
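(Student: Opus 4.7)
The plan is to prove the two branches separately. The ``otherwise'' branch is immediate: instantiating the hypothesis with $n_0 = n$, for each $n$ there is some $M_n \in \mathcal{J}_{n,d} \cup \mathcal{D}_{n,d}$ that avoids every pattern in $\mathcal{P}$, and since $M_n$ has dimensions $n \times \dots \times n$ with exactly $n$ $1$-entries, this witnesses $\ex(n,\mathcal{P},d) \ge n$.

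For the $O(1)$ branch, fix the $n_0$ from the hypothesis and consider any $\mathcal{P}$-free matrix $M$ of side length $n \ge n_0$ (the range $n < n_0$ is bounded trivially by $n_0^d$). The approach is Ramsey's theorem combined with iterated Erd\H{o}s--Szekeres. I would view the $1$-entries of $M$ as points in $[n]^d$ and $2$-color each unordered pair of distinct $1$-entries by J if they share at least one coordinate, and by D if they differ in every coordinate. If $M$ has more than the Ramsey number $R(n_0, n_0^{2^{d-1}})$ $1$-entries, there is either a J-clique of size $n_0$ or a D-clique of size $n_0^{2^{d-1}}$. A J-clique yields $n_0$ $1$-entries of $M$ pairwise sharing a coordinate: selecting an $n_0 \times \dots \times n_0$ submatrix of $M$ whose chosen indices in each dimension include every coordinate used by these entries (padding arbitrarily to reach side length $n_0$, which is possible since $n \ge n_0$) and then flipping superfluous $1$-entries to $0$-entries realizes an element of $\mathcal{J}_{n_0,d}$ as a pattern contained in $M$. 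A D-clique yields $n_0^{2^{d-1}}$ $1$-entries pairwise distinct in every coordinate; ordering them by first coordinate and applying 1D Erd\H{o}s--Szekeres iteratively to the sequences of their second, third, \dots, $d$-th coordinates extracts at each step a subset monotone in the next dimension, and the initial size $n_0^{2^{d-1}}$ guarantees a final subset of $n_0$ entries forming an equivalent of $I_{n_0,d}$, i.e., an element of $\mathcal{D}_{n_0,d}$ contained in $M$. In either case the hypothesis on $n_0$ forces $M$ to contain some pattern in $\mathcal{P}$, contradicting $\mathcal{P}$-freeness, so $\ex(n,\mathcal{P},d) \le R(n_0, n_0^{2^{d-1}}) = O(1)$.

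The main technical obstacle will be the submatrix-and-padding bookkeeping in each case. For the $\mathcal{J}$ reduction, I need that after restricting $M$ to the chosen indices in each dimension and padding, the $n_0$ distinguished $1$-entries still pairwise share a coordinate in the resulting $n_0 \times \dots \times n_0$ submatrix; for the $\mathcal{D}$ reduction, I need the final chain to remain pairwise distinct and monotone in every coordinate. Both properties are preserved under restriction to any subset of indices per dimension, and the defining properties of $\mathcal{J}_{n_0,d}$ and $\mathcal{D}_{n_0,d}$ are invariant under relabeling coordinates within each dimension, so any qualifying configuration of $n_0$ $1$-entries in $M$ can be re-indexed to sit inside a genuine element of $\mathcal{J}_{n_0,d} \cup \mathcal{D}_{n_0,d}$.
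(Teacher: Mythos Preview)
Your argument is correct, but it differs from the paper's proof in the $O(1)$ branch. Instead of invoking Ramsey's theorem to split into the $\mathcal{J}$-clique and $\mathcal{D}$-clique cases, the paper assumes $M$ avoids every element of $\mathcal{D}_{n_0,d}$ and applies Dilworth's theorem once for each of the $2^{d-1}$ partial orders associated to the equivalents of $I_{n_0,d}$: since there is no chain of length $n_0$ in any of these orders, the $1$-entries partition into at most $(n_0-1)^{2^{d-1}}$ sets that are antichains simultaneously in all orders, and two entries incomparable in every such order must share a coordinate. Pigeonhole then yields $n_0$ entries forming an element of $\mathcal{J}_{n_0,d}$. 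This gives the explicit bound $\ex(n,\mathcal{P},d)\le (n_0-1)^{1+2^{d-1}}$, which is polynomial in $n_0$, whereas your Ramsey bound $R(n_0,n_0^{2^{d-1}})$ is of order roughly $n_0^{(n_0-1)2^{d-1}}$. Your iterated Erd\H{o}s--Szekeres step is essentially the chain-side of the same Dilworth argument; the paper just runs Dilworth directly on the $d$-dimensional orders and thereby avoids the Ramsey detour altogether. Both routes are valid, but the paper's is shorter and quantitatively sharper.
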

\begin{proof}
If for every $n_0\in\mathbb{N}$ there is an element of $\mathcal{J}_{n_0,d}\cup\mathcal{D}_{n_0,d}$ that avoids all patterns in $\mathcal{P}$, then by definition $\ex(n,\mathcal{P},d)\geq n$ since every element of $\mathcal{J}_{n_0,d}\cup\mathcal{D}_{n_0,d}$ has dimension $n_0\times n_0\times\ldots\times n_0$ and $n_0$ $1$-entries.

Otherwise,
we show that $\ex(n,\mathcal{P},d)\leq (n_0-1)^{1+2^{d-1}}$. Let $M$ be a
matrix of dimensions $n\times\dots\times n$ with more than $(n_0-1)^{1+2^{d-1}}$ $1$-entries which avoids every matrix from $\mathcal{D}_{n_0,d}$. Under each of the $2^{d-1}$ possible partial orders, these $1$-entries do not contain a chain of length $n_0$. By Dilworth's theorem, this implies the existence of a partition of the $1$-entries into at most $n_0-1$ antichains. Repeating the application of Dilworth's theorem to every possible partial order, there is a partition of the $1$-entries into $(n_0-1)^{2^{d-1}}$ disjoint subsets where every pair of $1$-entries from the same subset are not comparable, i.e., have the same coordinate in at least one dimension. By pigeonhole principle one of these subsets has at least $n_0$ $1$-entries. Thus, $M$ contains some matrix in $\mathcal{J}_{n_0,d}$ and therefore also some matrix in $\mathcal{P}$.
\end{proof}

Below we determine the unique $d$-dimensional 0-1 matrix of dimensions $n\times\dots\times n$ that is $P$-saturated if $P$ has a single $1$-entry. Its unique structure will be used later to derive a result for extremal functions of families of $d$-dimensional 0-1 matrices.

\begin{prop}\label{prop:single_one_ddim}
If $P$ is a $d$-dimensional 0-1 matrix of dimensions $k_1 \times k_2 \times \dots \times k_d$ with a single $1$-entry with coordinates $(q_1,\dots,q_d)$ and $n\ge \max_{i\in[d]}k_i$, then $\ex(n, P, d) = \sat(n, P, d) = n^d - \prod_{i = 1}^d (n+1-k_i)$ and the unique  $P$-saturated $d$-dimensional 0-1 matrix $A$ of dimensions $n \times n \times \dots \times n$ has $1$-entries at any position with coordinates $(y_1,\dots,y_d)$ where $y_i\notin[q_i,n-k_i+q_i]$ for some integer $i\in[d]$ and $0$-entries elsewhere. Therefore for every integer $i\in[d]$ there is some $x_i\in[n]$ and a $P$-free $d$-dimensional 0-1 matrix of dimensions $n\times\dots\times n$ with $1$-entries at any position with $i^{\text{th}}$ coordinate equal to $x_i$ and $0$-entries elsewhere.
\end{prop}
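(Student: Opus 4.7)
The plan is to reduce the whole proposition to a single combinatorial characterization: an $n\times\cdots\times n$ 0-1 matrix $A$ contains $P$ if and only if $A$ has some $1$-entry $A(y_1,\ldots,y_d)=1$ with $y_i\in[q_i,\,n-k_i+q_i]$ for every $i\in[d]$. The forward direction is constructive: for each dimension $i$ we pick any $q_i-1$ indices strictly below $y_i$ and any $k_i-q_i$ indices strictly above it, which is possible exactly under the stated interval condition; together these choices produce a $k_1\times\cdots\times k_d$ submatrix whose $(q_1,\ldots,q_d)$-entry is $A(y_1,\ldots,y_d)$, witnessing a copy of $P$. The reverse direction just inverts this counting argument: any submatrix of $A$ witnessing a copy of $P$ must leave at least $q_i-1$ indices below and $k_i-q_i$ indices above the image of $P$'s lone $1$-entry along each dimension, forcing the corresponding coordinate into $[q_i,\,n-k_i+q_i]$.

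Write $B=\prod_{i=1}^{d}[q_i,\,n-k_i+q_i]$ for the ``forbidden box'', whose size is $\prod_i (n+1-k_i)$. The characterization then reads: $A$ is $P$-free iff no $1$-entry of $A$ lies in $B$. This immediately gives $\ex(n,P,d)\le n^d-|B|$, attained by placing a $1$ at every position outside $B$ and a $0$ at every position of $B$, which is exactly the matrix described in the statement. For saturation, any $P$-saturated matrix must have $B$ entirely $0$ (by $P$-freeness); and if a $0$-entry sat outside $B$, flipping it to a $1$ would place the new $1$-entry outside $B$ and therefore leave the matrix $P$-free, contradicting saturation. So every entry outside $B$ is already a $1$, uniquely pinning down the $P$-saturated matrix and giving $\sat(n,P,d)=\ex(n,P,d)=n^d-\prod_i(n+1-k_i)$.

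The final clause follows by choosing, for each dimension $i$, any $x_i\in[n]\setminus[q_i,\,n-k_i+q_i]$; this complement has size $k_i-1$, which is positive in the nondegenerate case $k_i\ge 2$. The single-layer matrix whose $1$-entries are exactly the positions with $i$-th coordinate $x_i$ then contains no $1$-entry in $B$ and is therefore $P$-free. I anticipate no substantial obstacle: the entire argument is driven by the box characterization, and the only minor care needed is the edge case in which some $k_i$ equals $1$, where the corresponding interval is all of $[n]$ and the layer construction for that particular dimension is vacuous.
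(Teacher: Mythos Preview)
Your proposal is correct and takes essentially the same approach as the paper: both arguments rest on the observation that $A$ contains $P$ if and only if $A$ has a $1$-entry in the box $B=\prod_i[q_i,\,n-k_i+q_i]$, from which the unique saturated matrix and the values of $\ex$ and $\sat$ follow immediately. Your write-up is more explicit about this characterization and more careful about the degenerate case $k_i=1$ (where the final layer construction is vacuous), but the underlying idea is identical to the paper's.
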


\begin{proof}
For any $P$-saturated $d$-dimensional 0-1 matrix $A$ of dimensions $n \times n \times \dots \times n$, $A$ cannot have a $1$-entry at the position with coordinates $(y_1, y_2, \dots, y_d)$ if $y_i \in [q_i, n-k_i+q_i]$ for all integers $i\in[d]$ because that would make $A$ contain $P$. Also, $A$ must have a $1$-entry at any position with coordinates $(y_1, y_2, \dots, y_d)$ for which $y_i \not \in [q_i, n-k_i+q_i]$ for some integer $i\in[d]$ because flipping the $0$-entry at the position does not introduce $P$ in $A$. Thus, $A$ has the stated structure with weight $n^d - \prod_{i = 1}^d (n+1-k_i)$. For any integer $i\in[d]$, we can choose any $x_i$ from $[n]\setminus[q_i,n-k_i+q_i]$ and the stated $d$-dimensional 0-1 matrix of dimensions $n\times\dots\times n$ is a submatrix of $A$ and therefore is $P$-free.
\end{proof}

Using Lemma~\ref{lem:JD}, we demonstrate that for any integer $k\in[0,d-1]$, no family of $d$-dimensional 0-1 matrices of size $k$ has extremal function in $(0, n^{d-k})$.
\begin{prop}\label{prop:either_or_prop}
If $\mathcal{P}$ is a family of $d$-dimensional 0-1 matrices with $k$ members for some $k < d$, then either $\ex(n, \mathcal{P}, d) = 0$ or $\ex(n, \mathcal{P}, d) \ge n^{d-k}$. 
\end{prop}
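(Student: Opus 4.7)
The plan is to construct, under the assumption $\ex(n,\mathcal{P},d)>0$, a cross-section of the form $M_D=\{z\in[n]^d : z_i=y_i\text{ for all }i\in D\}$ with $|D|\le k$ that avoids every pattern in $\mathcal{P}$; since $|M_D|=n^{d-|D|}\ge n^{d-k}$, this will give the desired bound. The base point $y=(y_1,\dots,y_d)$ will be any $1$-entry of a $\mathcal{P}$-free matrix $M^*$ guaranteed by $\ex>0$. Note that no pattern in $\mathcal{P}$ can be the all-zero matrix or the $1\times\cdots\times1$ single-entry matrix, since either case would force $\ex=0$.

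First I handle the single-$1$-entry patterns. For such a $P_l$ with its $1$-entry at $(q^*_{l,1},\dots,q^*_{l,d})$ and dimensions $k_{l,1}\times\cdots\times k_{l,d}$, Proposition~\ref{prop:single_one_ddim} describes the unique $P_l$-saturated matrix as the one whose $1$-entries occur exactly at positions with some coordinate outside $[q^*_{l,i},n-k_{l,i}+q^*_{l,i}]$. Since the single-entry matrix at $y$ is a submatrix of $M^*$ and hence $P_l$-free, there must be a dimension $i_l$ with $y_{i_l}\notin[q^*_{l,i_l},n-k_{l,i_l}+q^*_{l,i_l}]$. Collect these into $D_{\mathrm{sing}}=\{i_l\}_l$, indexed over the single-entry patterns in $\mathcal{P}$. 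Every $1$-entry of $M_{D_{\mathrm{sing}}}$ agrees with $y$ on each $i_l$-th coordinate, so $M_{D_{\mathrm{sing}}}$ avoids each such $P_l$.

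For the multi-$1$-entry patterns, suppose $M_{D_{\mathrm{sing}}}$ contains $P_l$ via some submatrix index maps $\pi_1,\dots,\pi_d$. For each $i\in D_{\mathrm{sing}}$, every $1$-entry $e$ of $P_l$ must satisfy $\pi_i(e_i)=y_i$; since $\pi_i$ is strictly increasing and hence injective, all $1$-entries of $P_l$ must share a common $i$-th coordinate. Therefore the spread set $T_l=\{i:P_l\text{'s $1$-entries disagree on the $i$-th coordinate}\}$, which is non-empty because $P_l$ has at least two $1$-entries, is disjoint from $D_{\mathrm{sing}}$. For every multi-entry $P_l$ that $M_{D_{\mathrm{sing}}}$ happens to contain I pick some $i^*_l\in T_l\setminus D_{\mathrm{sing}}$, and let $D_{\mathrm{mult}}$ denote the collection of these $i^*_l$.

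Finally take $D=D_{\mathrm{sing}}\cup D_{\mathrm{mult}}$. Since each pattern in $\mathcal{P}$ contributes at most one dimension to $D$, we have $|D|\le k$. The matrix $M_D\subseteq M_{D_{\mathrm{sing}}}$ inherits avoidance of every single-entry pattern, while for every multi-entry $P_l$ the coordinate $i^*_l\in D\cap T_l$ blocks an embedding by the same injectivity rigidity, since otherwise two $1$-entries of $P_l$ with distinct $i^*_l$-coordinates would both have to be mapped to $y_{i^*_l}$. Hence $M_D$ is $\mathcal{P}$-free with at least $n^{d-k}$ $1$-entries. The main delicate point is the rigidity claim that any embedding into a cross-section forces agreement of the pattern's $1$-entries on every fixed coordinate, which rests only on the strict monotonicity of submatrix index maps.
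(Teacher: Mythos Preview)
Your proof is correct and follows essentially the same strategy as the paper: for each pattern $P\in\mathcal{P}$ pick one coordinate direction in which a full $(d-1)$-dimensional layer through the anchor point is $P$-free, and then intersect these layers to obtain a $\mathcal{P}$-free cross-section of weight at least $n^{d-k}$. Your presentation is in fact slightly more careful than the paper's, since you explicitly anchor every fixed coordinate to the value $y_i$ coming from the $1$-entry of the witness matrix $M^*$; this guarantees that the chosen layers are pairwise compatible, a point the paper leaves somewhat implicit when it invokes Proposition~\ref{prop:single_one_ddim} for single-$1$-entry patterns. Your two-stage split into $D_{\mathrm{sing}}$ and $D_{\mathrm{mult}}$ is not strictly necessary (for a multi-$1$-entry $P_l$ one may simply pick any $i\in T_l$ and fix it to $y_i$, regardless of whether $M_{D_{\mathrm{sing}}}$ already avoids $P_l$), but it does no harm and the argument is sound.
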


\begin{proof}
Suppose that $\ex(n, \mathcal{P}, d) \neq 0$. Then there exists a $\mathcal{P}$-free $d$-dimensional 0-1 matrix $A$ of dimensions $n \times n \times \dots \times n$ with at least one 1-entry. Suppose that this 1-entry has coordinates $(x_1, \dots, x_d)$. Note that $\mathcal{P}$ cannot include any pattern with all entries equal to zero, or else $\ex(n, \mathcal{P}, d) = 0$. Moreover, $\mathcal{P}$ cannot include the $1\times1\times\dots\times 1$ matrix with a single $1$-entry, or else again we would have $\ex(n, \mathcal{P}, d) = 0$. Thus all patterns in $\mathcal{P}$ must have some dimension with length at least $2$, and they all must have at least one $1$-entry.

For any pattern $P \in \mathcal{P}$ with at least two $1$-entries, there exists an integer $i\in[d]$ such that the $d$-dimensional 0-1 matrix $B_P$ of dimensions $n \times n \times \dots \times n$ with $1$-entries at every entry with $i^{\text{th}}$ coordinate equal to some $x_i\in[n]$ is $P$-free (simply choose $i$ such that $P$ has at least two $1$-entries with different $i^{\text{th}}$ coordinates). For any pattern $P \in \mathcal{P}$ with only a single $1$-entry, it must have some dimension with length at least $2$. By Proposition~\ref{prop:single_one_ddim} there exists $x_i\in[n]$ for each dimension $i\in[d]$ on which the side length of $P$ is at least $2$, such that the $d$-dimensional 0-1 matrix $B_P$ of dimensions $n \times n \times \dots \times n$ with $1$-entries at every entry with $i^{\text{th}}$ coordinate equal to $x_i$, whenever it is defined, is $P$-free. Consider a $d$-dimensional 0-1 matrix $B$ of dimensions $n \times n \times \dots \times n$ such that each entry of $B$ is a $1$-entry if and only if for every $P \in \mathcal{P}$ the corresponding entry in $B_P$ is a $1$-entry. Then $B$ is $\mathcal{P}$-free and has at least $n^{d-k}$ $1$-entries. Therefore, $\ex(n, \mathcal{P}, d) \ge n^{d-k}$. 
\end{proof}

Given the degree of freedom that we have when constructing families of $d$-dimensional 0-1 matrices, it is possible to construct a family of $d$-dimensional 0-1 matrices with extremal function equal to $kn^r$ for any positive integer $k$ and integer $r\in[0,d-1]$.
\begin{thm}\label{thm:knr}
For all positive integers $d$ and $k$ and every integer $r \in [0, d-1]$, there exists a family of 0-1 matrices $\mathcal{P}_{d,k,r}$ such that $\ex(n, \mathcal{P}_{d,k,r}, d) = k n^r$ for all $n$ sufficiently large. Moreover, the smallest such $\mathcal{P}_{d,k,r}$ has size $|\mathcal{P}_{d,k,r}| = d-r$.
\end{thm}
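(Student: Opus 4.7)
The plan is to establish the theorem in two halves: a lower bound $|\mathcal{P}_{d,k,r}|\ge d-r$ that falls out of Proposition~\ref{prop:either_or_prop}, together with a matching explicit construction using single-$1$-entry patterns so that Proposition~\ref{prop:single_one_ddim} pins down the structure of $\mathcal{P}$-free host matrices exactly.

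For the lower bound, I will suppose a family $\mathcal{P}$ achieves $\ex(n,\mathcal{P},d)=kn^r$ for all sufficiently large $n$. If $|\mathcal{P}|\ge d$, then $|\mathcal{P}|\ge d\ge d-r$ for free since $r\ge 0$. Otherwise $|\mathcal{P}|<d$, and Proposition~\ref{prop:either_or_prop} forces either $\ex(n,\mathcal{P},d)=0$ or $\ex(n,\mathcal{P},d)\ge n^{d-|\mathcal{P}|}$; the former is ruled out by $k\ge 1$, so $kn^r\ge n^{d-|\mathcal{P}|}$ for all large $n$, which gives $|\mathcal{P}|\ge d-r$.

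For the construction, I will set $\mathcal{P}_{d,k,r}=\{P_1,\ldots,P_{d-r}\}$ where every $P_i$ has a single $1$-entry, so Proposition~\ref{prop:single_one_ddim} completely describes which hosts avoid each $P_i$. For each $i\in\{1,\ldots,d-r-1\}$ I take $P_i$ to be the $d$-dimensional 0-1 matrix of dimensions $1\times\cdots\times 1\times 2\times 1\times\cdots\times 1$ (with the $2$ in coordinate $i$) whose unique $1$-entry sits at $(1,\ldots,1,2,1,\ldots,1)$; Proposition~\ref{prop:single_one_ddim} then says that an $n\times\cdots\times n$ host $A$ avoids $P_i$ exactly when every $1$-entry of $A$ has $i^{\text{th}}$ coordinate equal to $1$. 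I take $P_{d-r}$ to be the $d$-dimensional 0-1 matrix of dimensions $\underbrace{2\times\cdots\times 2}_{d-r-1}\times(k+1)\times\underbrace{1\times\cdots\times 1}_{r}$ with unique $1$-entry at $(1,\ldots,1)$; by the same proposition, a host $A$ avoids $P_{d-r}$ iff every $1$-entry $(y_1,\ldots,y_d)$ satisfies $y_j=n$ for some $j<d-r$ or $y_{d-r}>n-k$. Combining these conditions (for $n\ge 2$ the requirement $y_j=1$ for $j<d-r$ kills the first disjunct), the admissible $1$-positions are exactly $(1,\ldots,1,y_{d-r},y_{d-r+1},\ldots,y_d)$ with $y_{d-r}\in\{n-k+1,\ldots,n\}$ and $y_j\in[n]$ arbitrary for $j>d-r$, of which there are $kn^r$. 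The matrix with $1$-entries at all such positions is readily verified to avoid $\mathcal{P}_{d,k,r}$, so $\ex(n,\mathcal{P}_{d,k,r},d)=kn^r$ for $n\ge\max(2,k)$.

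The construction itself is clean once one trusts Proposition~\ref{prop:single_one_ddim}, so the main obstacle is largely bookkeeping: reading off the allowed coordinate ranges from Proposition~\ref{prop:single_one_ddim} for the ``corner'' pattern $P_{d-r}$ so that its $P_{d-r}$-freeness translates into the precise disjunction $y_j=n$ for some $j<d-r$ or $y_{d-r}>n-k$, and then checking that the two boundary cases $r=0$ (where $P_{d-r}$ has no trailing length-$1$ dimensions and degenerates to $\underbrace{2\times\cdots\times 2}_{d-1}\times(k+1)$) and $r=d-1$ (where no $P_i$ with $i<d-r$ appears and $P_{d-r}$ reduces to $(k+1)\times 1\times\cdots\times 1$ with its single $1$ at the top) yield the correct counts $k$ and $kn^{d-1}$, respectively.
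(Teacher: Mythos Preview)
Your proof is correct and follows the same two-part structure as the paper: the lower bound on $|\mathcal{P}_{d,k,r}|$ via Proposition~\ref{prop:either_or_prop} is verbatim the paper's argument, and you then exhibit an explicit family of size $d-r$. The only real difference lies in the last pattern of the construction. The paper uses, in place of your $P_{d-r}$, the pattern $Q$ of dimensions $(k+1)\times 1\times\cdots\times 1$ with \emph{all} $k+1$ entries equal to $1$; avoiding $Q$ then simply caps each $1$-row at $k$ ones, and combined with the single-$1$-entry patterns $P_2,\ldots,P_{d-r}$ (which play the same role as your $P_1,\ldots,P_{d-r-1}$) this forces at most $kn^r$ ones directly. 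Your variant instead keeps every pattern of weight $1$ and trades the elementary pigeonhole count for a uniform appeal to Proposition~\ref{prop:single_one_ddim}, intersecting the unique saturated matrices to pin down the admissible positions exactly. Both constructions are equally short; yours is more systematic in its reliance on a single structural proposition, while the paper's keeps all patterns one-dimensional. One tiny bookkeeping slip: since $P_{d-r}$ has a side of length $k+1$, Proposition~\ref{prop:single_one_ddim} applies only once $n\ge k+1$, so your threshold should read $n\ge k+1$ rather than $n\ge\max(2,k)$ (though this is immaterial for ``sufficiently large $n$'').
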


\begin{proof}
To see that there exists such a $\mathcal{P}_{d,k,r}$ with $|\mathcal{P}_{d,k,r}| = d-r$, we define $d$ forbidden patterns. First we define $d-1$ patterns $P_2, \dots, P_d$ where dimension $i$ of $P_i$ has length $2$, all other dimensions of $P_i$ have length $1$, and $P_i$ consists of a 0-entry with all coordinates equal to $1$ and a 1-entry with the $i^{\text{th}}$ coordinate equal to $2$ and all other coordinates equal to $1$. Next we define the pattern $Q$ where the first dimension has length $k+1$, all other dimensions have length $1$, and all $k+1$ entries of $Q$ are equal to $1$. Consider the family $\mathcal{P}_{d,k,r}$ which consists of the $d-1-r$ patterns $P_2, \dots, P_{d-r}$ and the pattern $Q$ when $r < d-1$, and only the pattern $Q$ when $r = d-1$. Clearly $\ex(n, \mathcal{P}_{d,k,d-1}, d) = k n^{d-1}$, so we assume for the rest of the proof that $r < d-1$.

Suppose that $A$ is a $\mathcal{P}_{d,k,r}$-free $d$-dimensional 0-1 matrix with all dimensions of length $n$, where $n \ge k$. Since $A$ avoids $P_i$ for each $2 \le i \le d-r$, $A$ cannot have any $1$-entry with $i^{\text{th}}$ coordinate greater than $1$. Thus the only entries where $A$ could possibly have a $1$-entry are the $n^{r+1}$ entries whose $2^{\text{nd}}$ through $(d-r)^{\text{th}}$ coordinates are all equal to $1$. For any fixed values of the last $r$ coordinates, there are $n$ such entries, and at most $k$ of those entries can be $1$-entries, or else $A$ would contain $Q$. Thus $A$ has at most $k n^r$ $1$-entries, so $\ex(n, \mathcal{P}_{d,k, r}, d) \le k n^r$. Moreover, consider any $d$-dimensional 0-1 matrix $B$ with all dimensions of length $n \ge k$ and all entries equal to $0$ except for the $k n^r$ entries whose $2^{\text{nd}}$ through $(d-r)^{\text{th}}$ coordinates are all ones and whose first coordinates are in $[k]$. By construction, $B$ avoids every pattern in the family $\mathcal{P}_{d,k, r}$. Thus, $\ex(n, \mathcal{P}_{d,k, r}, d) = k n^r$.

For any family $\mathcal{P}$ with $|\mathcal{P}| < d-r$, we must have either $\ex(n, \mathcal{P}, d) = 0$ or $\ex(n, \mathcal{P}, d) \ge n^{d-|\mathcal{P}|}$ by Proposition~\ref{prop:either_or_prop}. Thus, $\ex(n, \mathcal{P}, d) \neq k n^r$ for all $n$ sufficiently large.
\end{proof}

\section{Saturation and semisaturation functions of $d$-dimensional 0-1 matrices}\label{sec:sat}

We make a simple observation below about the semisaturation function of general patterns. The saturation function does not have the property.
\begin{observation}
If $\mathcal{P}$ and $\mathcal{Q}$ are two families of patterns and $\mathcal{P}\subset\mathcal{Q}$, then
$$
\ssat(n,\mathcal{Q})\leq \ssat(n,\mathcal{P})
$$
\end{observation}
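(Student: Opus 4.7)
The plan is to take an extremal $\mathcal{P}$-semisaturated matrix and show that the same matrix is automatically $\mathcal{Q}$-semisaturated, so its weight witnesses the desired inequality. Concretely, I would start by choosing a $d$-dimensional 0-1 matrix $M$ of dimensions $n\times n\times\dots\times n$ that is $\mathcal{P}$-semisaturated and satisfies $w(M)=\ssat(n,\mathcal{P})$. The claim to establish is that $M$ is also $\mathcal{Q}$-semisaturated; once that is in hand, the definition of $\ssat$ gives $\ssat(n,\mathcal{Q})\le w(M)=\ssat(n,\mathcal{P})$.

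To verify the claim, pick any $0$-entry of $M$ and let $M'$ be obtained by flipping it to a $1$-entry. Because $M$ is $\mathcal{P}$-semisaturated, $M'$ contains a new copy of some pattern $P\in\mathcal{P}$, that is, a copy of $P$ which uses the newly flipped entry as one of its $1$-entries. Since $\mathcal{P}\subset\mathcal{Q}$, we have $P\in\mathcal{Q}$, and therefore $M'$ contains a new copy of an element of $\mathcal{Q}$. As the chosen $0$-entry was arbitrary, $M$ is $\mathcal{Q}$-semisaturated, completing the proof.

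There is essentially no obstacle here: the argument is a direct unwinding of the definition of semisaturation, which (unlike saturation) places no requirement that the matrix avoid anything. This is precisely why the analogous monotonicity fails for $\sat$: a $\mathcal{P}$-saturated matrix must avoid every element of $\mathcal{P}$, but it may well contain some pattern in $\mathcal{Q}\setminus\mathcal{P}$, so it need not be $\mathcal{Q}$-saturated. The only mild care needed is to make sure the copy produced is genuinely \emph{new}, i.e., uses the flipped entry, which is automatic from the definition applied to $M$.
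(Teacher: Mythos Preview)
Your proof is correct and follows essentially the same approach as the paper: both argue that any $\mathcal{P}$-semisaturated matrix is automatically $\mathcal{Q}$-semisaturated because a new copy of some $P\in\mathcal{P}$ is in particular a new copy of some element of $\mathcal{Q}$. Your version simply spells out the extremal-witness step and the remark contrasting with $\sat$ more explicitly.
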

\begin{proof}
If a pattern $M$ is $\mathcal{P}$-semisaturated, then it is also $\mathcal{Q}$-semisaturated because whenever a new copy of some element of $\mathcal{P}$ is introduced in $M$ there is also a new copy of some element of $\mathcal{Q}$ that is introduced in $M$.
\end{proof}

The following is extended from Lemma 4.5 in \cite{Tsai2023} and will be used later to prove more results about semisaturation functions.
\begin{lem}\label{lem:only}
Let $\mathcal{P}$ be a family of non-zero $d$-dimensional 0-1 matrices. Suppose that $d'<d$. 
If no $P\in\mathcal{P}$ contains a $1$-entry which is the only $1$-entry in every $d'$-dimensional cross section of $P$ that it belongs to, then $\ssat(n,\mathcal{P},d)=\Omega(n^{d-d'})$.
\end{lem}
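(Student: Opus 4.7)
The plan is to use semisaturation together with the structural hypothesis on $\mathcal{P}$ to show that every $0$-entry of a $\mathcal{P}$-semisaturated matrix $M$ must share a $d'$-dimensional cross section with some $1$-entry of $M$, and then deduce the lower bound from a direct covering count.

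Let $M$ be a $\mathcal{P}$-semisaturated $d$-dimensional 0-1 matrix of dimensions $n\times\dots\times n$, and fix an arbitrary $0$-entry $e$ of $M$. Flipping $e$ introduces a new copy of some $P\in\mathcal{P}$; since the copy is new, it must use $e$, so under the corresponding submatrix embedding $\phi\colon P\hookrightarrow M$ there is a $1$-entry $p$ of $P$ with $\phi(p)=e$. By the hypothesis on $\mathcal{P}$, there is another $1$-entry $p'$ of $P$ lying in a common $d'$-dimensional cross section with $p$, i.e., $p$ and $p'$ agree on some set $C$ of at least $d-d'$ coordinates. The embedding $\phi$ uses a strictly increasing index map in each dimension, so $\phi(p)=e$ and $\phi(p')=e'$ agree on exactly the dimensions in $C$ and disagree on at least one other. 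Hence $e'$ is a $1$-entry of $M$, distinct from $e$, sharing a $d'$-dimensional cross section with $e$.

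To conclude, I count how many entries a single $1$-entry can cover. Each $1$-entry $e'$ of $M$ belongs to $\binom{d}{d-d'}=\binom{d}{d'}$ distinct $d'$-dimensional cross sections of $M$, each of which contains $n^{d'}$ entries, so the set of entries of $M$ sharing a $d'$-dimensional cross section with $e'$ has cardinality at most $\binom{d}{d'}n^{d'}$. Since every one of the $n^d - w(M)$ many $0$-entries of $M$ lies in this set for at least one $1$-entry, a union bound gives
\[
n^d - w(M) \;\leq\; \binom{d}{d'}\, n^{d'}\, w(M),
\]
and rearranging yields $w(M) = \Omega(n^{d-d'})$, as desired.

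The only step requiring care is the translation in the middle paragraph: verifying that coordinate agreement in $P$ passes through the embedding to coordinate agreement in $M$, and likewise for disagreement. This is immediate from strict monotonicity of each dimensional index map in a submatrix embedding, so the bulk of the argument is the clean covering count at the end.
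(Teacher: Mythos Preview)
Your proof is correct and follows essentially the same approach as the paper: first argue that every $0$-entry of a $\mathcal{P}$-semisaturated matrix shares a $d'$-dimensional cross section with some $1$-entry, then do a double-counting (covering) argument to get the $\Omega(n^{d-d'})$ lower bound. The paper states the connection step more tersely (``each $0$-entry is connected with at least one $1$-entry'') while you spell out the embedding argument explicitly, but the substance is identical.
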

\begin{proof}
Suppose that $M$ is $\mathcal{P}$-semisaturated with each dimension of length $n$. Say that a $0$-entry and a $1$-entry of $M$ are \textit{connected} if they are in the same $d'$-dimensional cross section of $M$. Each $0$-entry is connected with at least one $1$-entry, and each $1$-entry is connected with at most $\binom{d}{d'}(n^{d'}-1)$ $0$-entries. So the weight of $M$ is at least 
$$
\frac{n^d}{1+\binom{d}{d'}(n^{d'}-1)}=\Theta(n^{d-d'}).
$$
\end{proof}

Before presenting our main result about semisaturation function of families of $d$-dimensional 0-1 matrices, we describe the existing necessary and sufficient conditions for a $d$-dimensional 0-1 matrix to have a bounded semisaturation function.
\begin{thm}\label{thm:bounded-ssat}
\cite{Tsai2023}
Given a non-empty $d$-dimensional 0-1 matrix $P$, $\ssat(n,P,d)=O(1)$ if and only if both of the following properties hold for $P$:\\
(i) For any integer $d'\in[d-1]$, every $d'$-dimensional face $f$ of $P$ contains a $1$-entry $o$ that is the only $1$-entry in every $(d-1)$-dimensional cross section that is orthogonal to $f$ and contains $o$.\\
(ii) $P$ contains a $1$-entry that is the only $1$-entry in every $(d-1)$-dimensional cross section that it belongs to.\\
	Otherwise, if at least one of the properties does not hold for $P$, then $\ssat(n,P,d)=\Omega(n)$.
\end{thm}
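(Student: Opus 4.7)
My plan is to prove both directions of the characterization.

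\textbf{Necessity.} I proceed by contrapositive. If (ii) fails, then every $1$-entry of $P$ shares a $(d-1)$-dimensional cross section of $P$ with some other $1$-entry, so Lemma~\ref{lem:only} applied to $\mathcal{P}=\{P\}$ with $d'=d-1$ immediately yields $\ssat(n,P,d)=\Omega(n)$. If instead (i) fails at some $d'$-dimensional face $f$ of $P$, I localize the counting argument of Lemma~\ref{lem:only} to the counterpart face $f_M$ of any $P$-semisaturated matrix $M$. For any $0$-entry $e$ of $f_M$, flipping $e$ yields a new copy of $P$; since submatrix embeddings preserve coordinate equalities and $e$ sits at the extreme coordinate on every dimension in $C_{f_M}$, the entry $e$ must play the role of some $1$-entry lying in $f$. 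The failure of (i) then produces a further $1$-entry of $M$ inside a $(d-1)$-dimensional cross section of $M$ orthogonal to $f_M$ that contains $e$. Each $1$-entry of $M$ belongs to only $d'$ such orthogonal cross sections, each meeting $f_M$ in $n^{d'-1}$ cells, so double counting gives $w(M)\bigl(d'n^{d'-1}+1\bigr) \ge n^{d'}$, i.e.\ $w(M) = \Omega(n)$.

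\textbf{Sufficiency.} I construct a $P$-semisaturated $M$ of dimensions $n\times\dots\times n$ with $O(1)$ $1$-entries for $n$ large. Let $o^*$ be the witness provided by (ii), and for every face $f$ of $P$ of dimension $d'\in[d-1]$, let $o_f$ be the witness provided by (i). Conceptually, each dimension of $M$ is split into three zones---a low-zone near $1$, a middle-zone, and a high-zone near $n$---of sizes determined only by $P$. Each $0$-entry $e$ of $M$ has a type $\tau(e)\in\{L,M,H\}^d$ recording which zone each coordinate lies in, and each type corresponds to a face $f(\tau)$ of $P$ (the face whose coordinate on $j$ is $1$, $k_j$, or unfixed according as $\tau_j$ is $L$, $H$, or $M$). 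For each $\tau$ I install an ``anchor copy'' of $P$ in $M$: the witness $o_{f(\tau)}$ is reserved to be played by the flipped entry, and for each other $1$-entry $p$ of $P$, its $M$-image takes the boundary value $1$ or $n$ on $j\in C_{f(\tau)}$ and is squeezed into the low-zone or high-zone on $j\notin C_{f(\tau)}$ according as $p_j<(o_{f(\tau)})_j$ or $p_j>(o_{f(\tau)})_j$, using strictly increasing distinct values in each dimension. Because (i) and (ii) force $(o_{f(\tau)})_j\ne p_j$ for every $j\notin C_{f(\tau)}$ and every $p\ne o_{f(\tau)}$, this squeezing is always well-defined. Vertices of $P$ with value $0$ are handled by explicitly placing $1$-entries at the corresponding corners of $M$, adding at most $2^d$ ones. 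Across the at most $3^d$ types, the total weight is $O(3^d w(P) + 2^d) = O(1)$.

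To verify semisaturation, for any $0$-entry $e$ of $M$, compute $\tau = \tau(e)$ and use the anchor $1$-entries installed for $\tau$ together with $e$ to form a new $P$-copy in which $e$ plays the role of $o_{f(\tau)}$. The row-selection sequence on $j\in C_{f(\tau)}$ reduces to the pre-placed anchor coordinates, and on $j\notin C_{f(\tau)}$ the sequence inserts $e_j$ between the low-zone and high-zone squeezed coordinates, which is strictly monotone because $e_j$ sits in the middle-zone strictly above every low-squeezed and strictly below every high-squeezed coordinate used by the anchor.

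\textbf{Main obstacle.} The principal difficulty is making the anchors for different types $\tau$ coexist inside a single matrix $M$ without interference. My strategy is to reserve disjoint bands of coordinates near $1$ and $n$ on each dimension, one band per face $f$, so that anchor $1$-entries for different faces do not accidentally meet each other's submatrix selections. Verifying strict monotonicity of every required row-selection across all $3^d$ type cases, and confirming that the completed $P$-copy is genuinely new (i.e.\ did not already exist in $M$ before the flip), is the bulk of the technical bookkeeping.
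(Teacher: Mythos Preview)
The paper does not prove this theorem directly—it is quoted from \cite{Tsai2023}—but the paper's proof of the more general Theorem~\ref{thm:ssat} specializes (with $k=0$) to a proof of the present statement, and that is the natural comparison point.

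Your necessity argument is correct and essentially identical to the paper's: Lemma~\ref{lem:only} for the failure of (ii), and the localized double count on the counterpart face $f_M$ for the failure of (i).

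Your sufficiency construction, however, has a genuine gap on the fixed dimensions $j\in C_{f(\tau)}$. You correctly observe that properties (i)--(ii) force $p_j\neq (o_{f(\tau)})_j$ for every $1$-entry $p\neq o_{f(\tau)}$ and every $j\notin C_{f(\tau)}$, and this is exactly what makes the squeezing on the unfixed dimensions work. But for $j\in C_{f(\tau)}$ there is no such guarantee: there can be $1$-entries $p\neq o_{f(\tau)}$ with $p_j=(o_{f(\tau)})_j$ (e.g.\ any other $1$-entry lying in the same face $f$). For such $p$, the image in $M$ must share the $j$-coordinate of the flipped entry $e$, and $e_j$ ranges over the entire low (or high) zone. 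A single pre-placed anchor at ``the boundary value $1$ or $n$'' cannot serve all these $e$'s simultaneously; and you cannot shrink the zones to singletons, because then on the unfixed dimensions you would have no room to squeeze $k_j-1$ distinct values below and above $e_j$. Concretely, take $d=2$ and $P=I_3$: for type $\tau=(L,M)$ the witness is $(1,1)$, and the $1$-entries $(2,2),(3,3)$ need images with row indices strictly above $e_1$, while any other $1$-entry in row $1$ of a general $P$ would need its image at row $e_1$ itself—your description gives no mechanism for the latter.

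The paper's construction resolves this tension by abandoning individual anchor copies and instead filling in \emph{solidly} every position of $M$ all of whose coordinates lie in the extreme ranges $[1,l_i-1]\cup[n-l_i+2,n]$. This costs $\prod_i 2(l_i-1)=O(1)$ ones, and the point is that when one forms the submatrix $P'$ around the flipped entry, every position of $P'$ whose preimage in $M$ has all coordinates extreme is automatically a $1$-entry—regardless of the exact value of $e_j$ on the face dimensions. The remaining positions of $P'$ are then shown to correspond to $0$-entries of $P$ via the witness properties. Your anchor idea could be repaired by adopting this solid-corner construction, but as written the single-anchor-per-type scheme does not establish semisaturation.
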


We show that every family of $d$-dimensional 0-1 matrices must have semisaturation function $\Theta(n^k)$ for some integer $k\in[0,d-1]$, and we give the necessary and sufficient conditions for a family of $d$-dimensional 0-1 matrices to have a $\Theta(n^{k})$ semisaturation function.

Because we do not want to mix up faces of different matrices, we use the notation of counterpart in the statement and proof. Moreover, for consistency of distinguishing which side of the matrix a face is on, we refer each face as the counterpart of some face of a $d$-dimensional 0-1 matrix of dimensions $2\times\dots\times2$. In this way, regardless of the dimensions of the matrix whose face we are interested in, the fixed coordinates of its counterpart in a $d$-dimensional 0-1 matrix of dimensions $2\times\dots\times2$ is always either $1$ or $2$.

\begin{thm}\label{thm:ssat}
Given a family $\mathcal{P}$ of non-empty $d$-dimensional 0-1 matrices, there exists an integer $k\in[0, d-1]$ such that $\ssat(n,\mathcal{P},d)=\Theta(n^k)$. In particular, $k$ is the smallest integer in $[0, d-1]$ such that both properties below hold for $(\mathcal{P},k)$:\\
(i) For every integer $d'\in[k+1,d-1]$ and for every $d'$-dimensional face $f$ of a $d$-dimensional $2\times\dots\times 2$ 0-1 matrix, there exists $P\in\mathcal{P}$ that has a $1$-entry $o$ in $f$'s counterpart $f_P$ in $P$ that is the only $1$-entry in every cross section that is $(k+1)$-orthogonal to $f_P$ and contains $o$.\\
(ii) There exists $P\in\mathcal{P}$ that contains a $1$-entry that is the only $1$-entry in every $(d-1-k)$-dimensional cross section that it belongs to.
\end{thm}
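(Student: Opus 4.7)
The plan is to verify that $k$ is well-defined and then establish matching upper and lower bounds on $\ssat(n,\mathcal{P},d)$. For well-definedness, the value $k=d-1$ always satisfies (i) and (ii): condition (i) is vacuous since $[d,d-1]=\emptyset$, and (ii) is trivial because any $1$-entry of any $P\in\mathcal{P}$ is the only $1$-entry in the $0$-dimensional cross section consisting of itself. So the smallest such $k$ exists in $[0,d-1]$.

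For the upper bound $\ssat(n,\mathcal{P},d)=O(n^k)$, I would construct a $\mathcal{P}$-semisaturated matrix $M$ of dimensions $n\times\cdots\times n$ with weight $O(n^k)$, generalizing the construction that underlies Theorem~\ref{thm:bounded-ssat}. For each face $f$ of a $d$-dimensional $2\times\cdots\times 2$ matrix of dimension $d'\ge k+1$, property (i) supplies a pattern $P_f\in\mathcal{P}$ together with a witness $1$-entry $o$ in the counterpart $f_{P_f}$ that is the only $1$-entry in every cross section $(k+1)$-orthogonal to $f_{P_f}$ containing $o$. I would embed $P_f$ into the counterpart face $f_M$ of $M$ so that the coordinates fixed by $f$ are assigned near the corresponding end, leaving at most $k$ free directions along which $o$ can slide; this contributes only $O(n^k)$ $1$-entries per face, and property (ii) supplies an additional interior configuration along a $k$-dimensional subspace that handles $0$-entries far from every face. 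Since there are only boundedly many faces, the total weight is $O(n^k)$. The semisaturation check — that flipping any $0$-entry produces a new copy of some $P\in\mathcal{P}$ — is driven by the uniqueness of $o$ on the orthogonal cross sections, which guarantees that each flipped $0$-entry plays the role of $o$ in the appropriate embedded copy.

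For the lower bound $\ssat(n,\mathcal{P},d)=\Omega(n^k)$, I would use the minimality of $k$. The case $k=0$ is immediate, so assume $k\ge 1$; then at least one of (i), (ii) fails for $k-1$. If (ii) fails at $k-1$, then no $P\in\mathcal{P}$ contains a $1$-entry that is the only $1$-entry in every $(d-k)$-dimensional cross section it belongs to, so Lemma~\ref{lem:only} with $d'=d-k$ yields $\ssat(n,\mathcal{P},d)=\Omega(n^{d-(d-k)})=\Omega(n^k)$. If instead (i) fails at $k-1$, then there exist $d'\in[k,d-1]$ and a $d'$-dimensional face $f$ such that no $P\in\mathcal{P}$ has a $1$-entry in $f_P$ which is the only $1$-entry in every cross section $k$-orthogonal to $f_P$ containing it. From this I would exhibit in any $\mathcal{P}$-semisaturated $M$ an $\Omega(n^k)$-sized collection of $0$-entries located near $f_M$ whose semisaturation certificates cannot share $1$-entries beyond a bounded multiplicity, so a double-counting argument forces $\Omega(n^k)$ $1$-entries in $M$.

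The main obstacle will be the second subcase of the lower bound. Failure of (i) at $k-1$ is a uniform negative statement over all $P\in\mathcal{P}$, and converting it into an $\Omega(n^k)$ lower bound requires a careful choice of the candidate $0$-entries near $f_M$ together with a bookkeeping argument showing that no single $1$-entry of $M$ can simultaneously witness semisaturation for too many of them. The upper-bound construction will also require care in gluing the face embeddings supplied by (i) with the interior witness supplied by (ii) so that no copy of any $P\in\mathcal{P}$ appears in $M$ before any $0$-entry is flipped.
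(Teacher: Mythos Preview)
Your overall plan mirrors the paper's: well-definedness at $k=d-1$, an $O(n^k)$ construction for the upper bound, and the lower bound via minimality of $k$ by splitting on which of (i), (ii) fails at $k-1$. The lower-bound half is essentially identical to the paper---Lemma~\ref{lem:only} with $d'=d-k$ handles the failure of (ii), and for the failure of (i) the paper runs exactly the double-counting argument you describe on the counterpart face $f_M$, bounding how many $0$-entries of $f_M$ a single $1$-entry of $M\setminus f_M$ can be ``connected'' to through a cross section $k$-orthogonal to $f_M$.

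Your upper-bound construction, however, differs from the paper's and is too loosely specified. The paper does not embed a chosen $P_f$ near each face; instead it takes $l_i=\max_{P\in\mathcal{P}}l_{P,i}$ and sets $M(x_1,\dots,x_d)=1$ precisely when at least $d-k$ of the $x_i$ lie outside $[l_i,\,n+1-l_i]$. This uniform rule gives weight $O(n^k)$, and the semisaturation check splits cleanly: if the flipped $0$-entry has all coordinates in the interior, property (ii) supplies $P$ and $o$; otherwise property (i) does, with the relevant face $f$ read off from which coordinates are near which end. The point is that every non-$o$ entry of the chosen $P$ then differs from $o$ in enough coordinates to land automatically on a $1$-entry of $M$. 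Your face-by-face scheme would have to reproduce this coverage for \emph{every} $0$-entry, not just $O(n^k)$ slid positions of $o$, and that is not evident from the sketch. One further correction: your last sentence worries about ensuring that no copy of any $P\in\mathcal{P}$ appears in $M$ before flipping. Semisaturation does not require $M$ to avoid $\mathcal{P}$; it only requires that each flip introduce a \emph{new} copy, so there is no gluing constraint of the kind you anticipate.
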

\begin{proof}
Let $k$ be the smallest integer in $[0,d-1]$ such that both properties hold for $(\mathcal{P},k)$. We start by proving that $\ssat(n,\mathcal{P},d)=\Omega(n^k).$ Specifically, since $k$ is the smallest integer in $[0,d-1]$ such that both properties hold for $(\mathcal{P},k)$, either
property (i) or (ii) does not hold for $(\mathcal{P},k-1)$. Below, we prove that $\ssat(n,\mathcal{P},d)=\Omega(n^k)$ in either case.

Let $M$ be a $d$-dimensional $\mathcal{P}$-semisaturated 0-1 matrix with all dimensions of length $n$. The statement holds trivially for $k=0$, so assume that $k>0$. Suppose that property (i) does not hold for $(\mathcal{P},k-1)$, i.e., there exists a $d'$-dimensional face $f$ of a $d$-dimensional $2\times\dots\times 2$ 0-1 matrix where $d'\in[k,d-1]$, such that no $P\in\mathcal{P}$ contains a $1$-entry $o$ in $f$'s counterpart $f_P$ in $P$ that is the only $1$-entry in every cross section which is $k$-orthogonal to $f_P$ and contains $o$. Let the counterpart of $f$ in $M$ be $f_M$. If $f_M$ has all entries equal to $1$, then $w(M)=\Omega(n^{k})$. Otherwise, we say that a $0$-entry in $f_M$ is connected to a $1$-entry in $M\setminus f_M$ if they are in the same cross section that is $k$-orthogonal to $f_M$. Each $0$-entry of $f_M$ is connected to at least one $1$-entry. Each $1$-entry of $M\setminus f_M$ is in $(2^{d-d'}-1)\sum_{i=k}^{d'}\binom{d'}{i}$ cross sections that are $k$-orthogonal to $f_M$, and each of them contains at most $n^{d'-k}$ $0$-entries of $f_M$. Thus each $1$-entry of $M\setminus f_M$ is connected to at most $(2^{d-d'}-1)\sum_{i=k}^{d'}\binom{d'}{i}n^{d'-k}$ $0$-entries in $f_M$. If $f_M$ has $\alpha$ $0$-entries, then
\begin{equation}
\begin{split}
w(M)&\geq (n^{d'}-\alpha)+\frac{\alpha}{(2^{d-d'}-1)\sum_{i=k}^{d'}\binom{d'}{i}n^{d'-k}}\\
&\geq \frac{n^{d'}}{(2^{d-d'}-1)\sum_{i=k}^{d'}\binom{d'}{i}n^{d'-k}}\\
&=\Theta(n^{k}).
\end{split}
\end{equation}
Suppose instead that property (ii) does not hold for $(\mathcal{P},k-1)$, i.e., no $P\in\mathcal{P}$ has a $1$-entry that is the only $1$-entry in every $(d-k)$-dimensional cross section that it belongs to. By Lemma~\ref{lem:only} $\ssat(n,\mathcal{P},d)=\Omega(n^{k})$.

Now, we prove that $\ssat(n,\mathcal{P},d)=O(n^k).$ In order to do so, we break down the remaining proof into three steps. In Step 1, 
we construct an $n\times\dots\times n$ 0-1 matrix $M$
  with $O(n^k)$ $1$-entries. In subsequent steps, depending on the position of an arbitrary flipped $0$-entry $M(z_1,\dots,z_d)$, we show that the flipped $0$-entry introduces a new copy of some $P\in\mathcal{P}$ and thus $M$ is $\mathcal{P}$-semisaturated.
In Step 2a we specify a 0-1 matrix $P\in\mathcal{P}$ and a $1$-entry $o$ of $P$.
In Step 2b we specify a submatrix $P'$ of $M'$, the flipped $M$, where $P'$ contains the flipped entry and has the same dimensions as $P$.
  In Step 2c, we show that $P'$ contains $P$ by checking that each entry of $P'$ that is not flipped is either a $1$-entry by construction of $M$ or corresponds to a $0$-entry of $P$ so its value does not affect whether $P'$ contains $P$. Moreover the flipped entry corresponds to $o$. Steps 3a-3c are similar to Steps 2a-2c except that $P',P,o$ are specified differently because the flipped $0$-entry is different by nature. With these steps we conclude that flipping any $0$-entry of $M$ introduces a new copy of some $P\in\mathcal{P}$, so $M$ is $\mathcal{P}$-semisaturated.

\noindent
\textbf{(1)} Suppose that the dimensions of any given $P\in\mathcal{P}$ are $l_{P,1}\times\dots\times l_{P,d}$ and for every integer $i\in[d]$, denote $\max_{P\in\mathcal{P}}l_{P,i}$ by $l_i$.
Let $M(x_1,\dots,x_d)=1$ if and only if there are at least $d-k$ choices of $i$ in $[d]$ such that $x_i$ is outside $[l_i,n+1-l_i]$.

\noindent
\textbf{(2)} We take
 this step when $z_i\in[l_{i},n+1-l_{i}]$ for each integer $i\in[d]$.

\noindent
\textbf{(2a)}  Let $o=P(o_1,\dots,o_d)$ be a $1$-entry of some $P\in\mathcal{P}$ with property (ii).

\noindent
\textbf{(2b)} In each dimension $i$ we restrict $M'$ to the indices
$[1,o_i-1]\cup\{z_i\}\cup[n-l_{P,i}+o_i+1,n]\}$
to obtain a submatrix $P'$ of the same dimensions as $P$ that contains the flipped $1$-entry.

\noindent
\textbf{(2c)} Consider an entry $p'=P'(z'_1,\dots,z'_d)$. If $z'_i\neq o_i$ for at least $d-k$ choices of $i$, then $p'=1$ by construction of $M$. If $z'_i=o_i$ for every integer $i\in[d]$, then $p'$ is the flipped $1$-entry matching $o$. Otherwise consider a cross section $g$ of $P$ such that its set of fixed dimensions $C_g$ is some $(k+1)$-subset of $\{i|i\in[d],z'_i=o_i\}$ and for each $i \in C_g$, coordinate $i$ of $g$ has the fixed value $o_i$. Cross section $g$ contains both $o$ and the counterpart of $p'$ in $P$. By property (ii), $g$ does not contain any $1$-entry other than $o$ and thus whether $P'$ contains $P$ does not depend on the value of $p'$.

\noindent
\textbf{(3)} We take this step when
$z_i\notin [l_{i},n+1-l_i]$ for some integer $i\in[d]$. We split $[d]$ into disjoint sets $X,Y,Z$: 
\begin{equation}
\begin{split}
X&=\{i|i\in[d],z_i<l_i\}\\
Y&=\{i|i\in[d],n+1-l_i<z_i\}\\
Z&=[d]\setminus X\setminus Y\\
\end{split}
\end{equation}
By assumption $X\cup Y\neq \emptyset$. Moreover, given that $M(z_1,\dots,z_d)=0$ and the the way that $M$ is constructed, we have $k<|Z|$.

Let $f$ be the face of a $d$-dimensional $2\times\ldots\times 2$ 0-1 matrix such that the $i^{\text{th}}$ coordinate of $f$ is fixed to $1$ or $2$ if $i\in X$ or $i\in Y$, respectively. In other words, $C_f=X\cup Y$ and the dimensionality of $f$ is $|Z|\in[k+1,d-1]$.

\noindent
\textbf{(3a)} Let $P\in\mathcal{P}$ be the $d$-dimensional 0-1 matrix with property (i) and let $f_P$ be $f$'s counterpart in $P$. For each $i\in Z$ let $l_i'$ be the $i^{\text{th}}$ coordinate of the $1$-entry $o$ contained in $f_P$ with property (i).

\noindent
\textbf{(3b)} We restrict $M'$ to the indices $I_i$ below for each $i\in [d]$ to obtain a submatrix $P'$ of the same dimensions of $P$ that contains the flipped $1$-entry:
$$
I_i=   
\begin{cases}
\{z_i\}\cup[n-l_{P,i}+2,n], & \text{if }i\in X\\
[1,l_{P,i}-1]\cup\{z_i\}, & \text{if }i\in Y\\
[1,l_{i}'-1]\cup\{z_i\}\cup[n-l_{P,i}+l_{i}'+1,n], &\text{otherwise}
\end{cases}.
$$

\noindent
\textbf{(3c)} Consider an entry $p'=P'(z'_1,\ldots,z'_d)$. Based on $p'$ we further split $X,Y,Z$:
\begin{equation}
\begin{split}
X_1&=\{i|z'_i=1\}\cap X, X_2=X\setminus X_1\\
Y_1&=\{i|z'_i=l_{P,i}\}\cap Y, Y_2=Y\setminus Y_1\\
Z_1&=\{i|z'_i=l_i'\}\cap Z, Z_2=Z\setminus Z_1\\
\end{split}
\end{equation}
If $|Z_1|\leq k$, then $p'=1$ by construction of $M$. Suppose that $|Z_1|>k$.
If $X_2\cup Y_2\cup Z_2$ is empty, then $p'$ is the flipped $1$-entry matching $o$.
If $X_2\cup Y_2\cup Z_2$ is not empty, consider a cross section $g$ of $P$ with $C_g=Z_1$ and for every $i\in C_g$ the $i^\text{th}$ coordinate of every entry in $g$ is fixed to $l'_i$. We have that $g$ is $(k+1)$-orthogonal to $f_P$ and contains both $o$ and the counterpart of $p'$ in $P$. Since by property (i) $g$ has no $1$-entry other than $o$, whether $P'$ contains $P$ or not does not depend on the value of $p'$.
\end{proof}
Theorem~\ref{thm:ssat} aligns with the expectation that the semisaturation function of $I_{1,d}$, the $d$-dimensional identity matrix with a single $1$-entry, is zero. The following lemma could also be proved without Theorem~\ref{thm:ssat}, nonetheless we prove it using Theorem~\ref{thm:ssat}.
\begin{lem}
If a non-empty $d$-dimensional 0-1 matrix $P$ has $l<d$ dimensions with side length one, then its semisaturation function is $\Theta(n^k)$ for some integer $k\in[l, d-1]$.
\end{lem}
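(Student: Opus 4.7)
My plan is to apply Theorem~\ref{thm:ssat} to the singleton family $\mathcal{P}=\{P\}$, which immediately yields $\ssat(n,P,d)=\Theta(n^k)$ for the smallest integer $k\in[0,d-1]$ such that properties (i) and (ii) both hold for $(\{P\},k)$. The task therefore reduces to proving $k\ge l$, which I plan to do by verifying that whenever $k'<l$, at least one of the two properties must fail.

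The argument will branch on the number of $1$-entries in $P$. Suppose first that $P$ has at least two $1$-entries. Then I plan to rule out property (ii) for every $k'<l$: a $(d-1-k')$-dimensional cross section has $|C_L|=k'+1$ fixed dimensions, and since $k'+1\le l$, I can choose $C_L$ to sit entirely inside the $l$ length-one dimensions of $P$. In those dimensions every entry has coordinate $1$, so any such cross section containing a given $1$-entry actually coincides with all of $P$, and hence no $1$-entry of $P$ can be the unique $1$-entry of that cross section. This rules out $k'<l$ in this case.

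If instead $P$ has exactly one $1$-entry, property (ii) holds trivially for every $k$, so I will use property (i) to force $k$ to be large. For any $k'\le d-2$ I pick $d'=d-1\in[k'+1,d-1]$, select an index $i\in[d]$ with $n_{P,i}>1$ (such an $i$ exists since $l<d$), and consider the $1$-dimensional face $f$ of the $2\times\dots\times 2$ matrix with $C_f=\{i\}$. By choosing $b_{B,i}\in\{1,2\}$ so that the counterpart value $b_{P,i}\in\{1,n_{P,i}\}$ differs from the $i$-th coordinate of the unique $1$-entry of $P$, the counterpart face $f_P$ in $P$ contains no $1$-entry at all, so no choice of $o$ can satisfy property (i). This forces $k=d-1$, which again lies in $[l,d-1]$ because $l<d$.

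The one place requiring a bit of care is the behaviour of the counterpart relation in length-one dimensions of $P$, where the set $\{1,n_{P,i}\}$ collapses to a single value. In the multi-entry case this collapse is precisely what makes the chosen cross section capture all of $P$; in the single-entry case, by contrast, the argument relies on the existence of a dimension with $n_{P,i}>1$ to produce a mismatch. Beyond this bookkeeping, no machinery beyond Theorem~\ref{thm:ssat} appears to be required.
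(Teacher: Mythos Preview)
Your proof is correct and follows essentially the same strategy as the paper: apply Theorem~\ref{thm:ssat} and verify that at least one of properties (i), (ii) fails for small $k'$. The paper organizes it contrapositively (assume (ii) holds for $k'=l-1$, deduce $P$ has a unique $1$-entry, then exhibit an empty $l$-dimensional face to violate (i)), whereas you branch directly on the number of $1$-entries; in the single-entry case you in fact prove the sharper conclusion $k=d-1$ by exhibiting an empty $(d-1)$-dimensional face, which the paper does not state. One cosmetic slip: the face $f$ with $C_f=\{i\}$ is $(d-1)$-dimensional, not $1$-dimensional as you wrote, but your argument uses it correctly.
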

\begin{proof}
The statement holds trivially when $l=0$, so assume that $0<l$.
It suffices to show that either property (i) or property (ii) does not hold for $(P,l-1)$. Suppose that the side lengths of the first $l$ dimensions are all ones, and property (ii) holds for $(P,l-1)$. Consider a cross section $j$ containing a $1$-entry $o$ in property (ii) with $C_j=[l]$. By property (ii) $o$ is the only $1$-entry in $j$, and it is also the only $1$-entry in $P$. Consider two $l$-dimensional faces $g$ and $h$ of $P$ where $(l+1)\in C_g\cap C_h$ and their fixed $(l+1)^{\text{th}}$ coordinates are different. Hence at least one of the two faces is empty, and property (i) does not hold of $(P,l-1)$.
\end{proof}

For every positive integer $d$ and every integer $k\in[0,d-1]$, we construct a $d$-dimensional 0-1 matrix with semisaturation function $\Theta(n^k)$.
\begin{lem}
For every $d\geq 2$ and $k\in[0,d-1]$, there exists a $d$-dimensional 0-1 matrix $P$ such that $\ssat(n,P,d)=\Theta(n^k)$.
\end{lem}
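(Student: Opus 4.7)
The plan is to construct $P$ by starting with a lower-dimensional matrix with bounded semisaturation and appending $k$ trivial dimensions of length one. The key tool is the scaling identity $\ssat(n, P', d'+1) = n \cdot \ssat(n, Q, d')$ whenever $P'$ is obtained from a $d'$-dimensional matrix $Q$ by appending one new dimension of length one. This identity holds because a $(d'+1)$-dimensional matrix $M$ of side length $n$ is $P'$-semisaturated if and only if each of its $n$ layers obtained by fixing the last coordinate is $Q$-semisaturated, since every occurrence of $P'$ in $M$ lies entirely within a single such layer and flipping any $0$-entry affects only the layer containing it. Iterating $k$ times, appending $k$ trivial dimensions to a $(d-k)$-dimensional matrix $Q_{d-k}$ yields a $d$-dimensional matrix $P$ with $\ssat(n, P, d) = n^k \cdot \ssat(n, Q_{d-k}, d-k)$.

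It therefore suffices to exhibit, for each $d' \ge 1$, a $d'$-dimensional 0-1 matrix $Q_{d'}$ with $\ssat(n, Q_{d'}, d') = \Theta(1)$ a positive constant. For $d' = 1$, take $Q_1$ to be the length-$2$ all-ones vector; any length-$n$ 0-1 vector with exactly one $1$-entry is $Q_1$-semisaturated, so $\ssat(n, Q_1, 1) = 1$. For $d' = 2$, take $Q_2 = I_{2,2}$; the $n \times n$ matrix with $1$-entries only at the four corners is $I_{2,2}$-semisaturated, since any non-corner $0$-entry pairs with an appropriate opposite corner to form a new copy of $I_{2,2}$ when flipped, and one checks by case analysis that fewer than four $1$-entries do not suffice. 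For $d' \ge 3$, the existence of $Q_{d'}$ satisfying both conditions of Theorem~\ref{thm:bounded-ssat} is established by the construction in \cite{Tsai2023}.

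Setting $P = Q_{d-k}$ with $k$ appended trivial dimensions then yields $\ssat(n, P, d) = n^k \cdot \Theta(1) = \Theta(n^k)$, as required. The main obstacle in this plan is the $d' \ge 3$ case of the bounded-semisaturation construction: one must carefully select $1$-entries so that every face counterpart contains a $1$-entry that is isolated within all sufficiently orthogonal cross sections it belongs to, while also maintaining one globally isolated $1$-entry to ensure property (ii). This combinatorial design problem is resolved by the construction imported from \cite{Tsai2023}, and once that is in place the scaling argument immediately produces $P$ for every $k \in [0,d-1]$.
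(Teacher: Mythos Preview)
Your reduction via the scaling identity $\ssat(n,P',d'+1)=n\cdot\ssat(n,Q,d')$ for $P'$ obtained from $Q$ by appending a length-one dimension is correct, and the argument that $M$ is $P'$-semisaturated iff each of its last-coordinate layers is $Q$-semisaturated is sound. This is a genuinely different route from the paper's: the paper constructs $P$ directly for each $k$ by starting from an empty $4\times\cdots\times4$ matrix and inserting interior $1$-entries face by face so that the criteria of Theorem~\ref{thm:ssat} are met exactly at level $k$, whereas you reduce everything to the $k=0$ case in dimension $d-k$ and then scale. What your approach buys is a clean separation: all the combinatorial work lives in the single ``bounded semisaturation'' base case. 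What the paper's approach buys is self-containment and a single uniform construction for all $k$.

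The potential gap is your reliance on \cite{Tsai2023} for $d'\ge 3$. That paper is cited here for the characterization Theorem~\ref{thm:bounded-ssat}, which is an if-and-only-if statement; it does not automatically furnish, for each $d'\ge 3$, an explicit matrix $Q_{d'}$ satisfying both conditions. You should verify that \cite{Tsai2023} actually contains such a construction. If it does, your proof is complete and shorter than the paper's. If it does not, the gap is precisely the hardest instance of the lemma: the $k=0$ case of the paper's construction is the most involved one, since one must place a suitably isolated $1$-entry in \emph{every} face of dimension in $[1,d'-1]$, and then a globally isolated $1$-entry for property~(ii). At that point you would be redoing the core of the paper's argument, just reorganized.

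A minor point: the assertion that fewer than four $1$-entries cannot $I_{2,2}$-semisaturate is unnecessary for your purposes; you only need $\ssat(n,I_{2,2},2)\ge 1$, which is immediate since the zero matrix is not semisaturated.
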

\begin{proof}
For a $d$-dimensional $l_1\times\dots l_d$ 0-1 matrix $P$ and a face $f$ of $P$, we say an entry $P(x_1,\dots,x_d)$ is \textit{interior} to $f$ if $1<x_i<l_i$ for 
  every $i\notin C_f$. Denote by property (iii) a slightly stronger property than property (i) in Theorem~\ref{thm:ssat}: a face $f$ has property (iii) if it has an interior $1$-entry $o$ such that every $(d-1)$-dimensional cross section which is orthogonal to $f$ and contains $o$ does not have any other $1$-entry.\\

Refer to properties (i) and (ii) in Theorem~\ref{thm:ssat}. In the context of a fixed $k$ and the constructed 0-1 matrix $P$, by slight abuse of notation when we say that property (i) holds for a face $f$
it means that $P$ has a $1$-entry $o$ in $f$'s counterpart $f_P$ that is the only $1$-entry in every cross section that is $(k+1)$-orthogonal to $f_P$ and contains $o$.
When we say that property (ii) holds for $(\{P\},k)$ it means that $P$ has a $1$-entry that is the only $1$-entry in every $(d-1-k)$-dimensional cross section that it belongs to.

We prove a stronger statement, that there exists a $d$-dimensional $l_1\times\ldots\times l_d$ 0-1 matrix $P$ such that
\begin{itemize}
    \item Property (i) does \textit{not} hold for any face of dimensionality less than $k+1$,
    \item Property (ii) holds for $(\{P\},0)$, and 
    \item Property (iii) holds
for every $d'$-dimensional face $f$ of $P$ where $d'\in[k+1, d-1]$.
\end{itemize}

We claim that it is indeed stronger than as stated in the result. When the third bullet holds, every face $f$ of dimensionality greater than $k$ has an interior $1$-entry $o$ such that every $(d-1)$-dimensional cross section $c$ that is orthogonal to $f$ and contains $o$ does not have any other $1$-entry. Since every cross section $c'$ that is $(k+1)$-orthogonal to $f$ and contains $o$ is contained in some $(d-1)$-dimensional cross section $c$ stated above, $c'$ also does not have any other $1$-entry. Therefore property (i) holds \textit{only} for faces of dimensionality greater than $k$.

\noindent
\textbf{Overview of the remaining proof}
In Step 1, we begin with an all-zeros matrix $P$. For each face of $P$ of dimensionality greater than $k$ for which property (iii) does not hold, we insert an interior $1$-entry in it to make property (iii) hold for it. In Step 1a we show that by the ordering of these insertions, no face loses property (iii) later in this step. In step 1b we show that the insertion does not make property (i) hold for any face of dimensionality less than $k+1$. In Step 2, when property (iii) holds only for every face of dimensionality greater than $k$ and property (ii) does not hold for $(\mathcal{P},0)$ we insert a $1$-entry to $P$.
In Step 2a we show that this insertion does not make any face lose property (iii).
In Step 2b we show that the insertion does not make property (i) hold for any face of dimensionality less than $k+1$. Thus the stronger requirements are satisfied for the final $P$ and we are done.

\noindent
\textbf{(1)}
We begin with a $d$-dimensional $4\times 4\times\ldots\times 4$ empty 0-1 matrix $P$, i.e., we initially set all side lengths $l_1,l_2,\dots, l_d$ of $P$ to $4$. For $d'=k+1,\ldots,d-1$, we pick a $d'$-dimensional face $f$ of $P$ without an interior $1$-entry satisfying property (iii), and replace $P$ by $P'$ constructed below. Suppose that the coordinate of every entry in $f$ is fixed to some $b_i\in\{1,l_i\}$ for every $i\in C_f$. 
Denote the indicator function by $\mathbbm{1}()$, which evaluates to $1$ when the condition in the parenthesis holds and $0$ otherwise.
  For matrix $P'$ and any $i\in[d]$ the side length of $P$ on dimension $i$ is $l_i+\mathbbm{1}(i\notin C_f)$, and
\begin{enumerate}
\item $P'(x_1,\ldots,x_d)=P\left(x_1-\mathbbm{1}(1\notin C_f\wedge x_1>\lceil\frac{l_1}{2}\rceil),\ldots,x_d-\mathbbm{1}(d\notin C_f\wedge x_d>\lceil\frac{l_d}{2}\rceil)\right)$
if $x_i\ne \lceil \frac{l_i}{2}\rceil$ for every $i\notin C_f$.\\
\item $P'(x_1,\ldots,x_d)=1$ if $x_i=b_i$ for every $i\in C_f$ and $x_i=\lceil\frac{l_i}{2}\rceil$ for every $i\notin C_f$.\\
\item $P'(x_1,\ldots,x_d)=0$ otherwise.\\
\end{enumerate}
Note that the insertion in the second condition above adds an interior $1$-entry $o$  to the counterpart of $f$, which is the only $1$-entry in every $(d-1)$-dimensional cross section that is orthogonal to the counterpart of $f$ and contains $o$.

\noindent
\textbf{(1a)}
We prove that this insertion does not eliminate property (iii) for the counterpart of any other face $f'$ of $P$ in $P'$.
Suppose to the contrary that $f'$ is orthogonal to a $(d-1)$-dimensional cross section $g$, their intersection contains a $1$-entry interior to $f'$ before the insertion that is the only $1$-entry in $g$, and the insertion above adds $o$ to the counterpart of $g$. Let $C_g=\{j\}$. If $j \in C_f$, then since the counterpart of $g$ contains $o$, we have that the $j^{\text{th}}$ coordinate of $g$ must be fixed to $b_{j}\in\{1,l_{j}\}$, the $j^{\text{th}}$ coordinate of $f'$ is not fixed, and the $j^{\text{th}}$ coordinate of the intersection of $g$ and $f'$ is $b_{j}$. Thus the intersection of $f'$ and $g$ did not have a $1$-entry interior to $f'$, a contradiction. If $j \notin C_f$, then the $j^{\text{th}}$ coordinate of $g$ is fixed to $\lceil \frac{l_{j}}{2}\rceil$, and based on the rules above the counterpart of $g$ does not have any other $1$-entry. Therefore the insertion does not make any face $f'$ lose property (iii). 

\noindent
\textbf{(1b)}
No face of dimensionality less than $d'+1\le k+1$ contains the added $1$-entry $o$. So when completed no face of dimensionality less than $k+1$ has a $1$-entry, thus property (i) does not hold for any face of dimensionality less that $k+1$.

\noindent
\textbf{(2)}
After a sequence of insertions above, for each integer $d'\in[k+1,d-1]$ every $d'$-dimensional face $f$ of $P$ has property (iii). If property (ii) does not hold for $(\{P\},0)$, from $P$ we construct a $d$-dimensional 0-1 matrix $P'$ with the side length on every dimension further increased by $1$ as below.
$$
P'(x_1,\ldots,x_d)=
\begin{cases}
	1 & \forall i\in[d], x_i=\lceil l_i/2\rceil\\
	P(x_1-\mathbbm{1}(x_1>\lceil l_1/2\rceil),\ldots,x_d-\mathbbm{1}(x_d>\lceil l_d/2\rceil)) & \forall i\in[d], x_i\ne \lceil l_i/2\rceil\\
	0&\text{otherwise}\\
\end{cases}.
$$

\noindent
\textbf{(2a)}
The added $1$-entry is the only $1$-entry in every $(d-1)$-dimensional cross section it belongs to. Moreover it is not an interior $1$-entry of any face, so the insertion does not make any face of $P$ lose property (iii).

\noindent
\textbf{(2b)}
The argument is same as Step 1b.
No face of dimensionality less than $d'+1\le k+1$ contains the added $1$-entry $o$, so every such face still does not contain any $1$-entry. Thus property (i) does not hold for any face of dimensionality less that $k+1$.

\end{proof}

\begin{cor}
For every positive integer $d\geq 2$ and integer $k\in[0,d-1]$, there exists a family of $d$-dimensional 0-1 matrices $\mathcal{P}$ such that $\ssat(n,\mathcal{P},d)=\Theta(n^k)$.
\end{cor}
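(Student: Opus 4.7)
The plan is to observe that this corollary is an immediate consequence of the preceding lemma, by taking the family to be a singleton. Specifically, given $d \ge 2$ and $k \in [0, d-1]$, the preceding lemma produces a single $d$-dimensional 0-1 matrix $P$ with $\ssat(n, P, d) = \Theta(n^k)$. Setting $\mathcal{P} = \{P\}$ gives a family of $d$-dimensional 0-1 matrices, and by definition $\ssat(n, \mathcal{P}, d) = \ssat(n, P, d)$, since a matrix is $\{P\}$-semisaturated if and only if it is $P$-semisaturated.

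The only point worth verifying is this last equivalence, which is immediate from unwinding definitions: for the singleton family $\{P\}$, introducing a new copy of some element of $\{P\}$ upon flipping a $0$-entry is the same as introducing a new copy of $P$, so the set of semisaturated matrices coincides, and hence the minimum weight over all $n\times\cdots\times n$ semisaturated matrices is identical in both settings.

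There is no substantive obstacle here; the work has already been done in the lemma. The corollary serves to record that the characterization of the semisaturation growth rate extends verbatim from single forbidden patterns to families. Combined with Theorem~\ref{thm:ssat}, which shows that every family has semisaturation function of the form $\Theta(n^k)$ for some $k \in [0, d-1]$, this corollary confirms that every such $k$ is actually realized.
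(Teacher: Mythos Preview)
Your proposal is correct and matches the paper's approach exactly: the corollary is stated immediately after the lemma constructing a single $d$-dimensional 0-1 matrix $P$ with $\ssat(n,P,d)=\Theta(n^k)$, and the paper gives no separate proof, treating it as immediate via the singleton family $\mathcal{P}=\{P\}$.
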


We generalize Theorem 1.3 of \cite{FK2021} below to show that no family of $d$-dimensional 0-1 matrices has saturation function strictly between $O(1)$ and $\Omega(n)$.
\begin{lem}
For any family $\mathcal{P}$ of non-empty $d$-dimensional 0-1 matrices, the function $\sat(n,\mathcal{P},d)$ is either $O(1)$ or $\Omega(n)$.
\end{lem}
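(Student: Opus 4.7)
The plan is to extend the two-dimensional argument of Fulek and Keszegh \cite{FK2021} to the $d$-dimensional setting. Suppose for contradiction that $\sat(n,\mathcal{P},d)$ is neither $O(1)$ nor $\Omega(n)$. Then there are arbitrarily large $n$ with $\sat(n,\mathcal{P},d) \le n/K$ for any prescribed $K$, and yet $\sat(n,\mathcal{P},d)$ is unbounded. I would derive a contradiction by showing that, for every sufficiently large $N$, there is a $\mathcal{P}$-saturated $d$-dimensional 0-1 matrix of dimensions $N\times\cdots\times N$ whose weight is bounded by an absolute constant depending only on $\mathcal{P}$.

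Let $L=\max_{P\in\mathcal{P}}\max_{i}l_{P,i}$ denote the largest side length appearing in any $P\in\mathcal{P}$ along any dimension. Fix a $\mathcal{P}$-saturated matrix $M$ of dimensions $n\times\cdots\times n$ with $s:=w(M)\le n/K$, where $K\gg L$. Since $M$ has at most $s$ non-empty $i$-layers in each dimension $i$, pigeonhole gives an interval $[a_i,b_i]\subset[n]$ containing all non-empty $i$-layers and bordered by empty boundary regions of width at least $L$ on both sides. Restricting $M$ to the resulting bounding box yields a matrix $M'$ with the same weight $s$ and an empty boundary of width $L$ around the locus of $1$-entries in each dimension. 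I would then verify that $M'$ is $\mathcal{P}$-saturated: flipping any $0$-entry $e$ of $M'$ yields a copy of some $P\in\mathcal{P}$ in the flipped $M$, and since each $P$ has side length at most $L$ in each dimension, any positions of this copy lying outside of $M'$ must match $0$-entries of $P$ and can be pulled back into the empty boundary layers of $M'$ without destroying the copy.

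I would next establish a padding lemma: given such an $M'$ with empty boundary of width at least $L$ in each dimension, one may insert additional empty layers into the boundary to obtain a $\mathcal{P}$-saturated matrix of arbitrary dimensions with the same weight $s$. Flipping any $0$-entry of the padded matrix must introduce a copy of some $P\in\mathcal{P}$: for entries in the core of $M'$, this is inherited from the saturation of $M'$; for entries inserted in the padded region, the copy is built using the $1$-entries of the core together with the flipped entry, with the remaining positions chosen among the preserved empty boundary layers. Iterating this operation in every dimension yields, for any $N$, a $\mathcal{P}$-saturated matrix of dimensions $N\times\cdots\times N$ with weight $s$, contradicting the assumed unboundedness of $\sat(n,\mathcal{P},d)$.

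The main obstacle is formalizing the padding lemma in $d$ dimensions. The two-dimensional proof of \cite{FK2021} leans on an explicit structural description of saturated matrices with few $1$-entries, together with a case analysis on rows and columns. In higher dimensions the case analysis becomes combinatorially richer, and one must confirm that every $0$-entry of the enlarged matrix, regardless of which dimension was padded, still produces a copy of some pattern upon flipping. Patterns $P\in\mathcal{P}$ with a single $1$-entry will need separate attention and are expected to be handled using Proposition~\ref{prop:single_one_ddim}, which pins down the unique $P$-saturated matrix for such $P$ and thereby reduces the padding question to a combination with the multi-entry case.
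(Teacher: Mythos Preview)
Your overall strategy---use pigeonhole to find enough empty layers in a sparse saturated matrix, then pad with more empty layers---is exactly the paper's approach. But the specific execution has a genuine error.

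The claim ``pigeonhole gives an interval $[a_i,b_i]\subset[n]$ containing all non-empty $i$-layers and bordered by empty boundary regions of width at least $L$ on both sides'' is false. Nothing prevents layers $1$ and $n$ from both being non-empty; then no interval of $[n]$ containing every non-empty $i$-layer has empty margins on both sides. What pigeonhole \emph{does} give, from $w(M)<n_0/(k-1)$ (with $k=L$), is a run of at least $k-1$ \emph{consecutive} empty $i$-layers somewhere in the interior of $[n]$, for each $i$. Your subsequent steps (restrict to a bounding box $M'$, verify $M'$ is saturated, then pad the boundary) all rest on the nonexistent margins, so they do not go through.

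The paper's proof avoids this entirely by skipping the extraction of $M'$ and padding $M$ directly: once you have a block of $k-1$ consecutive empty $i$-layers, you simply \emph{insert} additional empty $i$-layers into that block. One checks in a few lines that the enlarged matrix is still $\mathcal{P}$-saturated---any copy of some $P\in\mathcal{P}$ that would use inserted layers must use them for empty $i$-layers of $P$, and since $l_{P,i}\le k$ while the original block already supplies $k-1$ empty layers, the copy can be rerouted. The saturation check for a flipped entry in an inserted layer is handled by flipping the entry with the same transverse coordinates in one of the original empty layers. Doing this for each dimension $i$ gives saturated matrices of every sufficiently large size with the same weight, so $\sat(n,\mathcal{P},d)=O(1)$. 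No boundary structure, no restriction to a sub-box, no case analysis for single-$1$-entry patterns, and no appeal to Proposition~\ref{prop:single_one_ddim} are needed; the entire proof is a short paragraph.
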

\begin{proof}
Let $k=\max_{P\in\mathcal{P},i\in[d]}l_{P,i}$ be the maximum side length of all $d$-dimensional 0-1 matrices in $\mathcal{P}$, where the dimension of $P\in\mathcal{P}$ is $l_{P,1}\times l_{P,2}\times\ldots\times l_{P,d}$. If there exists a positive integer $n_0\in\mathbb{N}$ such that $\sat(n_0,\mathcal{P},d)<\frac{n_0}{k-1}$, then by definition there is a $d$-dimensional $n_0\times n_0\times\ldots\times n_0$ 0-1 matrix $M$ with less than $\frac{n_0}{k-1}$ $1$-entries that is $\mathcal{P}$-saturated, which has at least $k-1$ consecutive empty $i$-layers for every integer $i\in[d]$. Clearly $M$ is still $\mathcal{P}$-saturated if we insert any number of empty $i$-layers into the existing empty $i$-layers, so $\sat(n,\mathcal{P},d)=O(1)$. If for every positive integer $n_0\in\mathbb{N}$ we have $\sat(n_0,\mathcal{P},d)\geq\frac{n_0}{k-1}$, then $\sat(n,\mathcal{P},d)=\Omega(n)$.
\end{proof}

In the next result, we show that it is possible to construct a family of $d$-dimensional 0-1 matrices with saturation function equal to $k n^r$ for any positive integer $k$ and integer $r \in [0, d-1]$. This is an analogue of Theorem~\ref{thm:knr} which we proved for extremal functions. However, Theorem~\ref{thm:knr} also identified the minimum possible size of such a family for extremal functions. It remains to determine the minimum possible size of such a family for saturation functions.

\begin{thm}\label{thm:satknr}
For all positive integers $d$ and $k$ and every integer $r \in [0, d-1]$, there exists a family of 0-1 matrices $\mathcal{P}_{d,k,r}$ such that $\sat(n, \mathcal{P}_{d,k,r}, d) = k n^r$ for all $n$ sufficiently large.
\end{thm}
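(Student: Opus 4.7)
The plan is to show that the very same family $\mathcal{P}_{d,k,r}$ constructed in the proof of Theorem~\ref{thm:knr} also achieves saturation function $kn^r$, for all $n \ge k+1$. Recall that this family consists of the vector patterns $P_2, \ldots, P_{d-r}$ (which together force all $1$-entries to have their $2$nd through $(d-r)$-th coordinates equal to $1$) together with the pattern $Q$ of $k+1$ ones along the first dimension; for $r = d-1$ it is the singleton $\{Q\}$, and for $r = 0$ it contains no $Q$-free slack. In either boundary case the argument below still goes through, reading the empty collection of $P_i$'s as imposing no restriction on the location of $1$-entries.

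First I would observe that avoiding $\mathcal{P}_{d,k,r}$ confines every $1$-entry to the $(r+1)$-dimensional slice $S$ consisting of those positions whose $2$nd through $(d-r)$-th coordinates are all $1$, and forbids any $1$-row (line along dimension $1$) inside $S$ from containing $k+1$ or more $1$-entries. The slice $S$ contains exactly $n^r$ distinct $1$-rows, indexed by the last $r$ coordinates.

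For the upper bound, the matrix $B$ from Theorem~\ref{thm:knr} --- $1$-entries at precisely the $kn^r$ positions inside $S$ whose first coordinate lies in $[k]$ --- is $\mathcal{P}_{d,k,r}$-saturated. Indeed, flipping a $0$-entry off $S$ creates a $1$-entry with some coordinate $i \in [2, d-r]$ exceeding $1$, introducing a copy of $P_i$; flipping a $0$-entry inside $S$ raises the count of $1$-entries in the relevant $1$-row to $k+1$, introducing a copy of $Q$. Hence $\sat(n, \mathcal{P}_{d,k,r}, d) \le kn^r$.

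For the matching lower bound, let $M$ be any $\mathcal{P}_{d,k,r}$-saturated $n \times \dots \times n$ $0$-$1$ matrix. All $1$-entries of $M$ lie in $S$. Fix a $1$-row $\ell \subseteq S$: by avoidance of $Q$, $\ell$ has at most $k$ ones. If $\ell$ had at most $k-1$ ones then --- since $n \ge k+1$ --- there would be a $0$-entry $e \in \ell$ whose flip produces at most $k$ ones in $\ell$ (so no new copy of $Q$ arises), and since $e$'s $2$nd through $(d-r)$-th coordinates are all $1$ the flip produces no new copy of any $P_i$ either. This contradicts saturation, so every one of the $n^r$ $1$-rows in $S$ contains exactly $k$ ones, giving $w(M) = kn^r$. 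Combining the two bounds yields the claim. The only mild obstacle is keeping the boundary cases $r=0$ and $r=d-1$ aligned with the main argument, which is handled by the uniform reading of the empty sub-families noted above.
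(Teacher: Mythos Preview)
Your proposal is correct and follows essentially the same route as the paper: you reuse the family $\mathcal{P}_{d,k,r}$ from Theorem~\ref{thm:knr}, obtain the upper bound by exhibiting the same matrix $B$ and checking it is saturated (the paper simply invokes $\sat\le\ex$ and Theorem~\ref{thm:knr}), and obtain the lower bound by arguing that each of the $n^r$ admissible $1$-rows in the slice $S$ must carry exactly $k$ ones. The only cosmetic difference is that the paper requires $n\ge k$ rather than $n\ge k+1$, but this is immaterial for an ``$n$ sufficiently large'' statement.
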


\begin{proof}
We use the same construction as in Theorem~\ref{thm:knr}. We define $d$ forbidden patterns. First we define $d-1$ patterns $P_2, \dots, P_d$ where dimension $i$ of $P_i$ has length $2$, all other dimensions of $P_i$ have length $1$, and $P_i$ consists of a 0-entry with all coordinates equal to $1$ and a 1-entry with the $i^{\text{th}}$ coordinate equal to $2$ and all other coordinates equal to $1$. Next we define the pattern $Q$ where the first dimension has length $k+1$, all other dimensions have length $1$, and all $k+1$ entries of $Q$ are equal to $1$. Consider the family $\mathcal{P}_{d,k,r}$ which consists of the $d-1-r$ patterns $P_2, \dots, P_{d-r}$ and the pattern $Q$ when $r < d-1$, and only the pattern $Q$ when $r = d-1$. Clearly $\sat(n, \mathcal{P}_{d,k,d-1}, d) = k n^{d-1}$ since any $d$-dimensional 0-1 matrix which is $Q$-saturated must have exactly $k$ ones in each $1$-row, so we assume for the rest of the proof that $r < d-1$.

By Theorem~\ref{thm:knr}, we immediately obtain $\sat(n, \mathcal{P}_{d,k, r}, d) \le k n^r$. For the lower bound, suppose that $A$ is a $\mathcal{P}_{d,k,r}$-saturated $d$-dimensional 0-1 matrix with all dimensions of length $n$, where $n \ge k$. Since $A$ avoids $P_i$ for each $2 \le i \le d-r$, $A$ cannot have any $1$-entry with $i^{\text{th}}$ coordinate greater than $1$. Thus the only entries where $A$ could possibly have a $1$-entry are the $n^{r+1}$ entries whose $2^{\text{nd}}$ through $(d-r)^{\text{th}}$ coordinates are all equal to $1$. For any fixed values of the last $r$ coordinates, there are $n$ such entries, and at most $k$ of those entries can be $1$-entries, or else $A$ would contain $Q$.  Since $A$ is $\mathcal{P}_{d,k,r}$-saturated, $A$ must have exactly $k$ ones in each $1$-row for which the $2^{\text{nd}}$ through $(d-r)^{\text{th}}$ coordinates are all equal to $1$, or else we could add a $1$-entry to $A$ in the same $1$-row and obtain a new $d$-dimensional 0-1 matrix which still avoids $\mathcal{P}_{d,k,r}$. Thus, $\sat(n, \mathcal{P}_{d,k, r}, d) \ge k n^r$, so we have $\sat(n, \mathcal{P}_{d,k,r}, d) = k n^r$.
\end{proof}

Next, we show that the ratio between extremal function and saturation function of families of $d$-dimensional 0-1 matrices can be almost as large as the maximum possible ratio of $n^d$.
\begin{prop}
For every $d \ge 2$ and $\epsilon > 0$, there exists a family of $d$-dimensional 0-1 matrices $\mathcal{P}$ such that $\sat(n,\mathcal{P},d) = O(1)$ and $\ex(n, \mathcal{P}, d) = \Omega(n^{d-\epsilon})$. 
\end{prop}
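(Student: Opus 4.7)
The plan is to construct a finite family $\mathcal{P}$, parameterized by an integer $k = k(\epsilon, d)$, obtained by taking an almost-complete hypercube $M_0$ and adjoining one extra $1$-entry in every possible relative ``type''. Saturation will follow directly from the construction, and the extremal lower bound from a probabilistic argument. Specifically, fix $k$ large enough that $k^{d-1}$ exceeds some constant multiple of $d/\epsilon$. Let $M_0$ be the $k \times k \times \cdots \times k$ 0-1 matrix with a $1$-entry at every position except the corner $(1, 1, \ldots, 1)$, so $w(M_0) = k^d - 1$. To each extra-entry position $b$ associate a type $\tau(b)$, determined by the subset $S \subseteq [d]$ of dimensions with $b_i > k$ together with the values $b_i \in [k]$ for $i \notin S$; let $P_\tau$ be the abstract pattern formed by $M_0$ together with an extra $1$-entry in the position specified by $\tau$, restricted to its bounding box (for example, the type $S = \emptyset$, $b = (1, \ldots, 1)$ yields the all-ones $k \times \cdots \times k$ matrix). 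There are $(k+1)^d - k^d + 1 = O(dk^{d-1})$ distinct types, so $\mathcal{P} = \{P_\tau\}$ is a finite family depending only on $k$ and $d$.

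For the saturation bound, embed $M_0$ into an $n \times \cdots \times n$ matrix $M$ by placing its $1$-entries at $[k]^d \setminus \{(1, \ldots, 1)\}$ and taking every other entry to be $0$. Then $w(M) = k^d - 1 = O(1)$; $M$ avoids every $P \in \mathcal{P}$ since each $P$ has weight $k^d$ while $M$ has only $k^d - 1$ ones; and flipping any $0$-entry $b$ of $M$ produces a matrix whose $1$-entries are exactly the support of $P_{\tau(b)}$ in their bounding box, so the flipped matrix contains $P_{\tau(b)} \in \mathcal{P}$. Hence $M$ is $\mathcal{P}$-saturated, giving $\sat(n, \mathcal{P}, d) \le k^d - 1 = O(1)$.

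For the extremal lower bound, use the probabilistic method. Let $A$ be the random $n \times \cdots \times n$ matrix with each entry independently equal to $1$ with probability $p = n^{-\epsilon}$. Each $P_\tau$ has weight $k^d$ and bounding-box side lengths summing to at most $d(k+1)$, so the expected number of copies of $P_\tau$ in $A$ is at most $n^{d(k+1)} p^{k^d} = n^{d(k+1) - \epsilon k^d}$, which is $o(1)$ provided $\epsilon k^d > d(k+1)$, i.e., provided $k^{d-1}$ is sufficiently large in terms of $d/\epsilon$. Summing over the $O(dk^{d-1})$ patterns in $\mathcal{P}$ still gives $o(1)$, while Chernoff's inequality gives $w(A) \ge \frac{1}{2} p n^d = \Omega(n^{d-\epsilon})$ with high probability, so a random $A$ meeting both conditions exists, showing $\ex(n, \mathcal{P}, d) = \Omega(n^{d-\epsilon})$.

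The main obstacle is coordinating the finiteness of $\mathcal{P}$ with the two competing bounds. The type classification is essential: each $0$-entry $b$ of $M$ falls into only one of $O(dk^{d-1})$ types, independent of $n$, forcing $\mathcal{P}$ to be a bounded family and guaranteeing $\sat = O(1)$. Simultaneously, each $P_\tau$ has weight $k^d$ in a box of side lengths $O(k)$, so its ``density exponent'' $k^d/(d(k+1)) \approx k^{d-1}/d$ can be pushed above $1/\epsilon$ by choosing $k$ large---exactly the condition needed for random matrices of density $n^{-\epsilon}$ to be pattern-free.
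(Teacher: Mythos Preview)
Your argument is correct and the construction is essentially the paper's: take a dense $d$-dimensional block and form one pattern for each possible relative position of an extra $1$-entry, so that the block itself sits as a bounded-weight saturated matrix inside $n^d$, while every pattern in the family contains the block and therefore inherits a large extremal function. Two minor differences are worth noting. First, the paper uses the \emph{full} $r\times\cdots\times r$ corner as its base (so the saturated witness has exactly $r^d$ ones and the family has $(r+1)^d-r^d$ members), whereas you delete the corner $(1,\ldots,1)$ first; this twist is harmless but unnecessary, since with a full base every $0$-entry of the host already lies outside $[k]^d$ and the all-ones $k^d$ pattern never has to enter $\mathcal{P}$. Second, for the extremal lower bound the paper simply observes that each pattern contains the base $B_{d,r}$ and quotes the known bound $\ex(n,B_{d,r},d)=\Omega\bigl(n^{\,d-d(r-1)/(r^d-1)}\bigr)$ from \cite{GT2017}, while you run the random-matrix first-moment argument on the whole family directly; your route recovers the same exponent and has the advantage of being self-contained.
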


\begin{proof}
First, define $B_{d, r}$ to be the $d$-dimensional 0-1 matrix with all dimensions of length $r+1$, where the entries with all coordinates at most $r$ are $1$-entries and the entries with some coordinate equal to $r+1$ are $0$-entries. Let $\mathcal{P}_{d,r}$ be the family of $d$-dimensional 0-1 matrices consisting of $(r+1)^d - r^d$ forbidden patterns, where each pattern in $\mathcal{P}_{d,r}$ is obtained from $B_{d, r}$ by changing a single $0$-entry to $1$-entry.

Consider the $d$-dimensional 0-1 matrix $A$ with all dimensions of length $n > r$, where the entries with all coordinates at most $r$ are $1$-entries and the entries with some coordinate at least $r+1$ are $0$-entries. Note that changing any $0$-entry to an $1$-entry in $A$ produces a copy of some element of $\mathcal{P}_{d,r}$, so $A$ is $\mathcal{P}_{d,r}$-saturated. Thus $\sat(n,\mathcal{P}_{d,r},d) \le r^d = O(1)$. On the other hand, every pattern in $\mathcal{P}_{d,r}$ contains $B_{d, r}$, and \[\ex(n, B_{d, r}, d) = \Omega(n^{d-\frac{d(r-1)}{r^{d}-1}})\] by Theorem 2.1 of \cite{GT2017}, so \[\ex(n, \mathcal{P}_{d,r}, d) = \Omega(n^{d-\frac{d(r-1)}{r^{d}-1}}).\] Thus, we can choose $r$ sufficiently large so that $\frac{d(r-1)}{r^d-1} < \epsilon$.
\end{proof}

\section{Conclusion}\label{sec:con}

We identified several minimally non-$O(n^2)$ $3$-dimensional 0-1 matrices in Section~\ref{sec:ex}. It would be interesting to identify additional minimally non-$O(n^2)$ $3$-dimensional 0-1 matrices, and more generally to identify minimally non-$O(n^{d-1})$ $d$-dimensional 0-1 matrices for $d > 3$. One of the minimally non-$O(n^2)$ $3$-dimensional 0-1 matrices that we identified was
$$P=
\left(
			\begin{pmatrix}
			0 & 1 \\
			1 & 0 \\
			\end{pmatrix};
			\begin{pmatrix}
			1 & 0 \\
			0 & 1 \\
			\end{pmatrix}
			\right),$$
for which we showed that $\ex\left(n,P \right)=\Omega(n^{2.5})$ and $\ex\left(n,P \right)=O(n^{2.75})$. It remains to determine $\ex\left(n,P \right)$ up to a constant factor. For this problem, we conjecture that $\ex\left(n,P \right) = \Theta(n^{2.5})$.

In the same section, we proved for each $d > 2$ that there exist infinitely many minimally non-$O(n^{d-1})$ $d$-dimensional 0-1 matrices with all dimensions of length greater than $1$, generalizing a result of Geneson from \cite{Geneson2009} for the $d = 2$ case. Our result for $d > 2$ used a new operation on $d$-dimensional 0-1 matrices $P$ that produces a $(d+1)$-dimensional 0-1 matrix whose extremal function is on the order of $n$ times the extremal function of $P$. The operation uses the new dimension to produce a diagonal version of the original forbidden pattern. There are already several other known operations which can be performed on 0-1 matrices that provide sharp bounds on the extremal function of the new pattern, and most of these operations are easy to generalize to $d$-dimensional 0-1 matrices \cite{GHLNPW2019}. For the purpose of obtaining new bounds on extremal functions of forbidden $d$-dimensional 0-1 matrices, it would be useful to find other operations on $d$-dimensional 0-1 matrices which provide sharp bounds on the extremal function of the new pattern.

We showed for any minimally non-$O(n^{d-1})$ $d$-dimensional 0-1 matrix of dimensions $k_1 \times k_2 \times \dots \times k_d$ with $k_1 \le k_2 \le \dots \le k_d$ that $k_d \le 1+2 \sum_{i =1}^{d-1} (2k_i - 2)$. We believe that this bound is far from sharp, so it would be interesting to improve the bound or to identify some minimally non-$O(n^{d-1})$ $d$-dimensional 0-1 matrices for which the ratio of the length of the longest dimension to the sum of the lengths of the other dimensions is as large as possible. We also derived an upper bound on the weight of a minimally non-$O(n^{d-1})$ $d$-dimensional 0-1 matrix in terms of the lengths of its dimensions. In particular, suppose that $P$ is a minimally non-$O(n^{d-1})$ $d$-dimensional 0-1 matrix of dimensions $k_1 \times k_2 \times \dots \times k_d$ for which $P$ does not have all ones, two dimensions of length $2$, and all other dimensions of length $1$. We proved that the weight of $P$ is at most $k_d-1+\prod_{i=1}^{d-1} k_i$. This bound is attained by the 0-1 matrix 
$$
\begin{pmatrix}
1 & 1 & 0\\
1 & 0 & 1\\
\end{pmatrix}
$$
and its reflections, rotations, and embeddings in higher dimensions. However, we have not found any other minimally non-$O(n^{d-1})$ $d$-dimensional 0-1 matrices which attain this bound, so it remains to sharpen the bound as much as possible for 0-1 matrices where the longest dimension has length greater than $3$. Another open problem is to sharpen our upper bound on the number of minimally non-$O(n^{d-1})$ $d$-dimensional 0-1 matrices with first $d-1$ dimensions $k_1 \times k_2 \times \dots \times k_{d-1}$.

In Section~\ref{sec:sat}, we proved for every positive integer $d\geq 2$ and integer $r\in[0,d-1]$ that there exists a $d$-dimensional 0-1 matrix $P$ such that $\ssat(n,P,d)=\Theta(n^r)$. Moreover, we showed for every family $\mathcal{P}$ of non-empty $d$-dimensional 0-1 matrices that there exists an integer $r \in [0,d-1]$ such that $\ssat(n,\mathcal{P},d)=\Theta(n^r)$. For each $r \in [0, d-1]$, we characterized which families $\mathcal{P}$ of non-empty $d$-dimensional 0-1 matrices have semisaturation function $\ssat(n,\mathcal{P},d)=\Theta(n^r)$.

For the saturation function, we showed for all positive integers $d$ and $k$ and every integer $r \in [0, d-1]$ that there exists a family of $d$-dimensional 0-1 matrices $\mathcal{P}_{d,k,r}$ such that $\sat(n, \mathcal{P}_{d,k,r}, d) = k n^r$ for all $n$ sufficiently large. It would be interesting to find the minimum possible size of such a family. We also showed in the same section that for any family $\mathcal{P}$ of non-empty $d$-dimensional 0-1 matrices, the function $\sat(n,\mathcal{P},d)$ is either $O(1)$ or $\Omega(n)$. Combined with Tsai's result from \cite{Tsai2023} that $\sat(n, P, d) = O(n^{d-1})$ for every $d$-dimensional 0-1 matrix $P$, this subsumes the result of Fulek and Keszegh \cite{FK2021} for $d = 2$ that $\sat(n,P)$ is either $O(1)$ or $\Theta(n)$ for every 0-1 matrix $P$. We conjecture for any forbidden family $\mathcal{P}$ of $d$-dimensional 0-1 matrices that $\sat(n,\mathcal{P},d)=\Theta(n^r)$ for some integer $r \in [0, d-1]$. If the conjecture is true, a more difficult problem is to characterize which families $\mathcal{P}$ of non-empty $d$-dimensional 0-1 matrices have saturation function $\sat(n,\mathcal{P},d)=\Theta(n^r)$ for each $r \in [0,d-1]$. 



\section*{Acknowledgment}
The authors would like to thank the reviewers for their valuable comments that help improve the paper significantly.
Shen-Fu Tsai is supported by the Ministry of Science and Technology of Taiwan under grants MOST 111-2115-M-008-010-MY2 and MOST 113-2115-M-008-006-MY3.

\bibliographystyle{plain}
\bibliography{pattern-avoidance}

\end{document}